\documentclass{jaums}

\usepackage{amssymb,amsbsy,amsthm,amsmath,graphicx,epsfig}

\usepackage{version}

\numberwithin{equation}{section}
     \addtolength{\textwidth}{3 truecm}
     \addtolength{\textheight}{1 truecm}
     \setlength{\voffset}{-.6 truecm}
     \setlength{\hoffset}{-1.3 truecm}
\theoremstyle{plain}
\newtheorem{theorem}[subsection]{Theorem}
\newtheorem{proposition}[subsection]{Proposition}
\newtheorem{lemma}[subsection]{Lemma}
\newtheorem{corollary}[subsection]{Corollary}

\newtheorem{remark}[subsection]{Remark}
\renewcommand{\leq}{\leqslant}
\renewcommand{\geq}{\geqslant}
\newsavebox{\proofbox}
\savebox{\proofbox}{\begin{picture}(7,7)  \put(0,0){\framebox(7,7){}}\end{picture}}

\newcommand\Z{\mathbb{Z}}
\newcommand\R{\mathbb{R}}

\newcommand\C{\mathbb{C}}
\newcommand\N{\mathbb{N}}

\newcommand\Q{\mathbb{Q}}

\newcommand\n{\mathfrak{n}}

\newcommand\eps{\varepsilon}

\newcommand\I{{\operatorname{I}}}
\newcommand\II{{\operatorname{II}}}
\newcommand{\li}{ {\rm li } }



\parindent 5mm
\parskip   0mm

\begin{document}

\title{Counting the number of solutions to the Erd\H{o}s-Straus equation on unit fractions}

\author{Christian Elsholtz}
\address{Institut f\"ur Mathematik A, Steyrergasse 30/II,
Technische Universit\"at  Graz,
A-8010 Graz, Austria}
\email{elsholtz@math.tugraz.at}

\author{Terence Tao}
\address{Department of Mathematics, UCLA, Los Angeles CA 90095-1555}
\email{tao@math.ucla.edu}
\subjclass{11D68, 11N37 secondary: 11D72, 11N56}

\vspace{-0.3in}
\begin{abstract} 
For any positive integer $n$, let $f(n)$ denote the number of solutions to
the Diophantine equation 
$$\frac{4}{n}=\frac{1}{x}+\frac{1}{y}+\frac{1}{z}$$
 with $x,y,z$ positive integers. The \emph{Erd\H{o}s-Straus
conjecture} asserts that $f(n) > 0$ for every $n \geq 2$. In this paper we obtain a number of upper and lower bounds for $f(n)$ or $f(p)$ for typical values of natural numbers $n$ and primes $p$. For instance, we establish that
$$ N \log^2 N \ll \sum_{p\leq N} f(p) \ll N \log^2 N \log \log N.$$ 
These upper and lower bounds show that a typical prime has a
small number of solutions to the Erd\H{o}s-Straus Diophantine
equation; small, when compared with other additive problems, like Waring's problem.
\end{abstract}

\maketitle

\section{Introduction}
For any natural number $n \in \N = \{1,2,\ldots\}$, let $f(n)$ denote the number of solutions $(x,y,z) \in \N^3$ to the Diophantine equation 
\begin{equation}\label{xyz}
\frac{4}{n} = \frac{1}{x} + \frac{1}{y} + \frac{1}{z}
\end{equation}

(we do not assume $x,y,z$ to be distinct or in increasing order).   Thus for instance
$$ f(1)=0, f(2)=3, f(3) = 12, f(4) = 10, f(5)=12, f(6)=39, f(7)=36, 
f(8)=46,  \ldots$$
We plot the values of $f(n)$ for $n \leq 1000$, 
and separately restricting to primes $p\leq 1000$ in Figures \ref{fig1}, \ref{fig2}. 
\begin{figure}[ht]
 \includegraphics[scale=1.00]{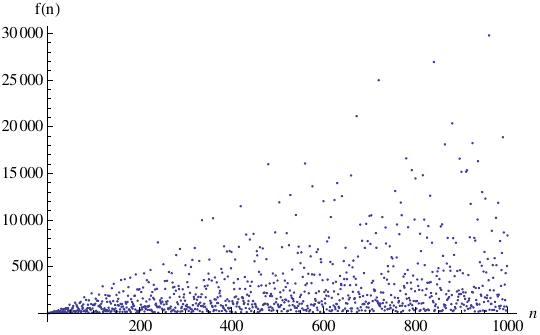}
\caption{The value $f(n)$ for all $n \leq 1000$.}\label{fig1}
\end{figure}
\begin{figure}[ht]
 \includegraphics[scale=1.00]{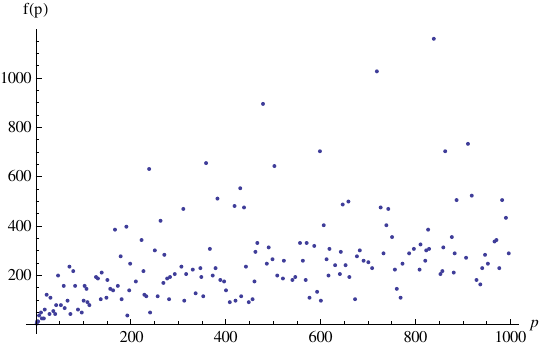}
\caption{The value $f(p)$ for all primes $p \leq 1000$.}\label{fig2}
\end{figure}

From these graphs one might be tempted to draw 
conclusions, such as ``$f(n) \gg n$
infinitely often'', that we will refute in our investigations below.

The \emph{Erd\H{o}s-Straus conjecture} (see e.g. \cite{guy}) asserts that $f(n) >
0$ for all $n \geq 2$; it remains unresolved, although there are a number of
partial results. The earliest references to this conjecture are
papers by Erd\H{o}s \cite{Erdos:1950} and Obl\'{a}th \cite{Oblath:1950}, and we
draw attention to the fact that the latter paper was submitted in 1948.

Most subsequent approaches list parametric solutions, which solve
the conjecture for $n$ lying in certain residue classes. 
These soluble classes are
either used for analytic approaches via a sieve method, or for computational
verifications.
For instance, it was shown by Vaughan \cite{vaughan} that the
number of $n < N$ for which $f(n)=0$ is at most $N \exp( - c \log^{2/3} N )$
for some absolute constant $c>0$ and all sufficiently large $N$. (Compare also
\cite{nakayama, Webb:1970, Li:1981, Yang:1982} for some weaker results).


The conjecture was  verified for all $n \leq 10^{14}$ in \cite{swett}. 
In Table \ref{table:history}
we list a more complete history of these computations,
but there may be further unpublished computations as well.

\begin{table}{\label{table:history}}
\centering
\begin{tabular}{|r|r|l|}
\hline
$5000$&$\leq$ 1950& {\text{Straus, see \cite{Erdos:1950}}}\\ \hline
$8000$&1962& {\text{Bernstein \cite{bernstein}}}\\ \hline
$20000$&$\leq$ 1969& {\text{Shapiro, see \cite{mordell}}}\\ \hline
$106128$&1948/9& {\text{Oblath \cite{Oblath:1950}}}\\ \hline
$141648$&1954&\text{Rosati \cite{rosati}}\\ \hline
$10^7$& 1964& \text{Yamomoto \cite{yamamoto}}\\ \hline
$1.1 \times 10^7$& 1976 &\text{Jollensten \cite{Jollenstein:1976}}\\ \hline
$10^8$& 1971 &\text{Terzi \cite{Terzi:1971}}\\ \hline
$10^9$ & 1994 &{\text{Elsholtz \& Roth (unpublished)}}\\ \hline
$10^{10}$&1995&\text{Elsholtz \& Roth (unpublished)}\\ \hline
$1.6 \times  10^{11}$&1996&\text{Elsholtz \& Roth (unpublished)}\\ \hline 
$10^{10}$&1999&\text{Kotsireas \cite{kotsireas}}\\ \hline
$10^{14}$&1999&\text{Swett \cite{swett}}\\ \hline
$2\times  10^{14}$&2012&\text{Bello-Hern\'{a}ndez, Benito, Fern\'{a}ndez \cite{bello}}\\ \hline
$10^{17}$ & 2014 & \text{Salez \cite{salez}} \\ \hline
\end{tabular}
\caption{Numerical verifications of the Erd\H{o}s-Straus conjecture.  It appears that Terzi's set of soluble residue classes is
  correct, but that the set of checked primes in these classes is incomplete.  Another reference to a calculation up to $10^8$ 
due to N.~Franceschine III (1978) (see \cite{guy, erdosandgraham} and frequently restated elsewhere) 
only mentions Terzi's calculation,  but is not an independent verification. We are grateful to I.~Kotsireas for
confirming this (private communication).}
\end{table}

Most of these 
previous approaches concentrated on the question whether
$f(n)>0$ or not. In this paper we will instead study the average growth or
extremal values of $f(n)$.

Since we clearly have $f(nm) \geq f(n)$ for any $n,m \in \N$, we see that to prove the Erd\H{o}s-Straus conjecture it suffices to do so when $n$ is equal to a prime $p$.  

In this paper we investigate the \emph{average} behaviour of $f(p)$ for $p$ a prime.  More precisely, we consider the asymptotic behaviour of the sum
$$ \sum_{p\leq N} f(p)$$
where $N$ is a large parameter, and $p$ ranges over all primes up to $N$.  As we are only interested in asymptotics, we may ignore the case $p=2$, and focus on the odd primes $p$.

Let us call a solution $(x,y,z)$ to \eqref{xyz} a \emph{Type I solution} if $n$ divides $x$ but is coprime to $y,z$, and a \emph{Type II solution} if $n$ divides $y,z$ but is coprime to $x$.  Let $f_\I(n), f_{\II}(n)$ denote the number of Type I and Type II solutions respectively.  By permuting the $x,y,z$ we clearly have
\begin{equation}\label{3ff}
 f(n) \geq 3f_\I(n) + 3f_\II(n)
\end{equation}
for all $n>1$.  Conversely, when $p$ is an odd prime, it is clear from considering the denominators in the Diophantine equation
\begin{equation}\label{4pn}
\frac{4}{p} = \frac{1}{x} + \frac{1}{y} + \frac{1}{z}
\end{equation}
that at least one of $x,y,z$ must be divisible by $p$; also, it is not possible for all three of $x,y,z$ to be divisible by $p$ as this forces the right-hand side of \eqref{4pn} to be at most $3/p$.  We thus have
\begin{equation}\label{4p}
f(p) = 3f_\I(p) + 3f_\II(p)
\end{equation}
for all odd primes $p$.  Thus, to understand the asymptotics of $\sum_{p \leq N} f(p)$, it suffices to understand the asymptotics of $\sum_{p \leq N} f_\I(p)$ and $\sum_{p \leq N} f_\II(p)$.  As we shall see, Type II solutions are somewhat easier to understand than Type I solutions, but we will nevertheless be able to control both types of solutions in a reasonably satisfactory manner.

We can now state our first main theorem.

\begin{theorem}[Average value of $f_\I,f_\II$]\label{main}  For all sufficiently large $N$, one has the bounds
\begin{align*}
 N \log^3 N \ll \sum_{n \leq N} f_\I(n) &\ll N \log^3 N\\
 N \log^3 N \ll \sum_{n \leq N} f_\II(n) &\ll N \log^3 N\\
N \log^2 N \ll \sum_{p \leq N} f_\I(p) &\ll N \log^2 N \log \log N \\
N \log^2 N \ll \sum_{p \leq N} f_\II(p) &\ll N \log^2 N.
\end{align*} 
\end{theorem}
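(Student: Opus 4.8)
The plan is to convert each solution count into a sum over an auxiliary variable and a divisor, and then estimate the resulting sums by classical means -- averages of divisor functions via the Dirichlet hyperbola method and estimates such as $\sum_{n\le X}\tau(n)^2\asymp X\log^3X$, and, in the prime case, the prime number theorem in arithmetic progressions together with the Brun--Titchmarsh inequality. The key reductions are these. For a Type II solution of $\frac4n=\frac1x+\frac1y+\frac1z$ with $n\mid y,z$, write $y=nb$, $z=nc$; the equation becomes $4=\frac nx+\frac1b+\frac1c$, so $x=\frac{nbc}{4bc-b-c}$, and a Type II solution is exactly a pair $(b,c)\in\N^2$ with $4bc-b-c\mid nbc$. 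Since $4bc-b-c>bc$ for all $b,c\ge1$, when $n=p$ is an odd prime this forces $p\mid 4bc-b-c$; writing $4bc-b-c=pe$ one then needs $e\mid bc$ and gets $x=bc/e$, and the identity $(4b-1)(4c-1)=4(4bc-b-c)+1$ shows the congruence is $(4b-1)(4c-1)\equiv1\md p$, which pins $c$ to a single residue class modulo $p$ once $b$ is fixed. For a Type I solution with $n\mid x$, write $x=na$; then $\frac1y+\frac1z=\frac{4a-1}{na}$, and after cancelling $g=\gcd(4a-1,n)$ the number of admissible $(y,z)$ is the number of divisors of $\bigl((n/g)a\bigr)^2$ in a prescribed residue class modulo $(4a-1)/g$. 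The conditions $\gcd(y,n)=\gcd(z,n)=1$ restrict $g$; in particular, when $n=p$ is prime they force $p\mid4a-1$, so $4a-1=pm$ and $\frac1y+\frac1z=\frac ma$ with $\gcd(m,a)=1$, counted by the divisors $d\mid a^2$ with $d\equiv-a\md m$.

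For the two upper bounds over $n\le N$, I would reverse the order of summation so that the outer variables are $a$ (resp.\ the pair $(b,c)$) together with the relevant divisor, and the inner sum counts $n\le N$ subject to a divisibility; this inner count is $O(N/(\text{modulus}))$, and summing over the divisor leaves a series with partial sums of order $\log^3$ -- here $\sum_{n\le X}\tau(n)^2\asymp X\log^3X$ (equivalently $\sum_{n\le X}\tau(n^2)/n\asymp\log^3X$) and the hyperbola method do the work. The matching lower bounds come from explicit two-parameter families: for Type II one chooses $b,c$ built from two divisors of $n$ in fixed residue classes modulo $4$, so that $4bc-b-c\mid nbc$ automatically, and checks that distinct choices give distinct triples; for Type I one chooses $a$ together with a divisor of the relevant square in the right class; counting these yields $\gg N\log^3N$. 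The prime sums are handled the same way, except that the extra congruence modulo $p$ (Type II) or the constraint $p\mid4a-1$ (Type I) removes one logarithmic factor, giving exponent $2$; for the upper bounds one bounds the number of relevant primes in a progression to the modulus in question by Brun--Titchmarsh, $\pi(N;q,r)\ll N/(\phi(q)\log(2N/q))$, and for the lower bounds one uses the prime number theorem in progressions together with an anatomy-of-integers input ensuring, for a positive proportion of primes $p\le N$, a genuinely two-parameter supply of solutions.

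I expect the main obstacle to be the upper bound for $\sum_{p\le N}f_\I(p)$, and with it the extra factor $\log\log N$ there and in the upper bound for $\sum_{p\le N}f(p)$. In the Type I analysis the modulus $q$ of the progression into which the prime $x=pa$ is forced ranges over essentially the whole interval up to $\asymp N$; for such $q$ the factor $\log(2N/q)$ in Brun--Titchmarsh is only $O(1)$, and $\sum_{q\le N}1/(\phi(q)\log(2N/q))$ is then of order $\log\log N$ rather than $O(1)$ (since $\sum_{q\le N/2}1/(q\log(2N/q))\asymp\log\log N$). Thus these methods cannot rule out that Type I solutions with abnormally large $x$ contribute a genuine $\log\log N$ on average, which is why the upper and lower bounds for $\sum_{p\le N}f_\I(p)$ differ. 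Even the stated lower bound $\sum_{p\le N}f_\I(p)\gg N\log^2N$ is delicate: it requires enough control on divisors of $a^2$ lying in progressions to the modulus $m=(4a-1)/p$ to manufacture two independent logarithmic factors' worth of solutions for many primes. The Type II prime case is cleaner because there the missing degree of freedom is pinned by a congruence rather than supplied by a Brun--Titchmarsh count, so no $\log\log N$ loss occurs and $\sum_{p\le N}f_\II(p)\asymp N\log^2N$ holds.
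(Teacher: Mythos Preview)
Your high-level plan has several correct intuitions --- notably the diagnosis of where the $\log\log N$ in the Type~I prime bound originates --- but the sketch leaves real gaps at exactly the points where the paper has to work hardest.

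The first issue is the parameterisation. Your Type~II description via pairs $(b,c)$ with $4bc-b-c\mid nbc$, and the analogous Type~I description via $a$ with $x=na$, are correct but too coarse to read off the $\log^3$ directly. The paper instead factors $x=abd$, $y=acdn$, $z=bcdn$ (Type~II) with $a,b,c$ having no common factor, obtaining an essentially bijective correspondence with quadruples $(a,c,d,e)$ satisfying $n=4acde-4a^2d-e$ and $acde\asymp n$. Then $\sum_{n\le N}f_\II(n)$ is essentially $\#\{(a,c,d,e):acde\le N\}\asymp\sum_{m\le N}\tau_4(m)\asymp N\log^3 N$, which is the clean route to the upper bound. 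Your invocation of $\sum_{n\le X}\tau(n)^2\asymp X\log^3X$ sits at the right order of magnitude, but it is not clear how it connects to your $(b,c)$ count without first passing to such a refined factorisation; note in particular that in your variables there is no evident size restriction on $b,c$ once $n\le N$, so swapping sums is not as innocent as you suggest.

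Second, and more seriously, the Type~I upper bound requires a genuinely non-trivial input that your plan omits. In the paper's coordinates the Type~I count over $n\le N$ becomes $\sum_{acd\ll N}\tau(4a^2d+1)$, a divisor sum over a \emph{quadratic} polynomial in $(a,d)$. This is not handled by the hyperbola method or by $\sum\tau(n)^2$; the paper proves a quantitative refinement of Erd\H os's theorem on $\sum_{n\le N}\tau(P(n))$ with uniform dependence on the coefficients, and then uses quadratic reciprocity to control $\sum_{a\le A}\sum_{d\le B}\tau(kad^2+1)$ uniformly in $k$. Your route via divisors of $a^2$ in a residue class modulo $m=(4a-1)/g$ faces the same obstacle in disguise: summing that restricted divisor count over $a$ and $m$ is again a divisor sum along a quadratic, and comparable machinery is needed.

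Third, your explanation for why $\sum_{p\le N}f_\II(p)$ avoids the $\log\log N$ is not the mechanism the paper uses. Brun--Titchmarsh \emph{is} applied in this case too; the saving comes from observing, in the $(a,b,c,d,e)$ coordinates, that at least one of $ade$, $acd$, $ab$ is $\ll N^{4/5}$, so that in each subcase the modulus in Brun--Titchmarsh is $\ll N^{4/5}$ and $\log(N/q)\gg\log N$ with no loss. Each subcase then reduces to a different divisor-type sum ($\tau_3(m)/\phi(m)$, $\tau_3(m+1)/\phi(m)$, or $\tau(a+b)/\phi(ab)$), handled separately. Your two-variable parameterisation does not expose this trichotomy.

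Finally, for the prime lower bounds the paper needs the Bombieri--Vinogradov inequality, since the moduli range up to a small power of $N$; the prime number theorem in a single fixed progression does not suffice, and your ``anatomy-of-integers input'' hint does not substitute for it.
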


Here, we use the usual asymptotic notation $X \ll Y$ or $X=O(Y)$ to denote the estimate $|X| \leq CY$ for an absolute constant $C$, and use subscripts if we wish to allow dependencies on the implied constant $C$, thus for instance $X \ll_\eps Y$ or $X = O_\eps(Y)$ denotes the estimate $|X| \leq C_\eps Y$ for some $C_\eps$ that can depend on $\eps$.   We remark that in a previous version of this manuscript, the weaker bound $\sum_{p \leq N} f_\II(p) \ll N \log^2 N \log\log N$ was claimed.  As pointed out subsequently by Jia \cite{jia}, the argument in that previous version in fact only gave $\sum_{p \leq N} f_\II(p) \ll N \log^2 N \log\log^2 N$, but can be repaired to give the originally claimed bound $\sum_{p \leq N} f_\II(n) \ll N \log^2 N \log \log N$.  These bounds are of course superceded by the results in Theorem \ref{main}.

As a corollary of this and \eqref{4p}, we see that
$$ N \log^2 N \ll \sum_{p \leq N} f(p) \ll N \log^2 N \log \log N.$$
From this, the prime number theorem, and Markov's inequality, we see that for
any $\eps > 0$, we can find a subset $A$ of primes of relative lower density at least $1-\eps$, thus
\begin{equation}\label{liminf}
 \liminf_{N \to \infty} \frac{|\{ p \in A: p \leq N\}|}{|\{ p: p \leq N \}|} \geq 1-\eps,
\end{equation}
such that $f(p) = O_\eps(\log^3 p \log \log p)$ for all $p \in A$.   Informally, a typical prime has only $O(\log^3 p \log \log p)$ solutions to the Diophantine equation \eqref{4pn}; or alternatively, for any function $\xi(p)$ of $p$ that goes to infinity as $p \to \infty$, one has $O(\xi(p) \log^3 p \log \log p)$ for all $p$ in a subset of the primes of relative density $1$.   
This may provide an explanation as to why analytic methods (such as the circle method) appear to be insufficient to resolve the Erd\H{o}s-Straus conjecture, as such methods usually only give non-trivial lower bounds on the number of solutions to a Diophantine equation in the case when the number of such solutions grows polynomially with the height parameter $N$. (There are however some exceptions to this rule, such as Gallagher's results \cite{gallagher} on representing integers as the sum of a prime and a bounded number of powers of two, but such results tend to require a large number of summands in order to compensate for possible logarithmic losses in the analysis.)

The double logarithmic factor $\log \log N$ in the above arguments arises from technical limitations to our method (and specifically, in the inefficient nature of the Brun-Titchmarsh inequality \eqref{brun} when applied to very short progressions), and we conjecture that it should be eliminated.

\begin{remark}  In view of these results, one can naively model $f(p)$ as a Poisson process with intensity at least $c \log^3 p$ for some absolute constant $c$.  Using this probabilistic model as a heuristic, one expects any given prime to have a ``probability'' $1-O(\exp(-c \log^3 p))$ of having at least one solution, which by the Borel-Cantelli lemma suggests that the Erd\H{o}s-Straus conjecture is true for all but finitely many $p$.  Of course, this is only a heuristic and does not constitute a rigorous argument.  (However, one can view the results in \cite{vaughan}, \cite{els}, based on the large sieve, as a rigorous analogue of this type of reasoning.)
\end{remark}

\begin{remark}  From Theorem \ref{main} we have the lower bound $\sum_{n \leq
    N} f(n) \gg N \log^3 N$.  In fact one has the stronger bound $\sum_{n \leq
    N} f(n) \gg N \log^6 N$ (Heath-Brown, private communication) using the methods from \cite{heath}; see Remark \ref{heath-remark} for further discussion.  Thus, for composite $n$, most solutions are in fact neither of Type I or Type II.  It would be of interest to get matching upper bounds for $\sum_{n \leq N} f(n)$, but this seems to be beyond the scope of our methods.  It would of course also be interesting to control higher moments such as $\sum_{p \leq N} f_\I(p)^k$ or $\sum_{p \leq N} f_\II(p)^k$, but this also seems to unfortunately lie out of reach of our methods, as the level of the relevant divisor sums becomes too great to handle.
\end{remark}

To prove Theorem \ref{main}, we first use some solvability criteria for Type I and Type II solutions to obtain more tractable expressions for $f_\I(p)$ and $f_\II(p)$.  As we shall see, $f_\I(p)$ is essentially (up to a factor of two) the number of quadruples $(a,c,d,f) \in \N^4$ with $4acd = p+f$, $f$ dividing $4a^2d+1$, and $acd \leq 3p/4$, while $f_\II(p)$ is essentially the number of quadruples $(a,c,d,e) \in \N^4$ with $4acde = p + 4a^2d + e$ and $acde \leq 3p/2$.  (We will systematically review the various known representations of Type I and Type II solutions in Section \ref{represent-sec}.) This, combined with standard tools from analytic number theory such as the Brun-Titchmarsh inequality and the Bombieri-Vinogradov inequality, already gives most of Theorem \ref{main}.  The most difficult bound is the upper bounds on $f_\I$, which eventually require an upper bound for expressions of the form
$$ \sum_{a \leq A} \sum_{b \leq B} \tau( kab^2 + 1 )$$
for various $A,B,k$, where $\tau(n) := \sum_{d \mid n} 1$ is the number of divisors of $n$, and $d \mid n$ denotes the assertion that $d$ divides $n$.  By using an argument of Erd\H{o}s \cite{erdos}, we obtain the following bound on this quantity:

\begin{proposition}[Average value of $\tau(kab^2+1)$]\label{kab}  For any $A, B > 1$, and any positive integer $k \ll (AB)^{O(1)}$, one has
$$ \sum_{a \leq A} \sum_{b \leq B} \tau(kab^2+1) \ll A B \log(A+B) \log(1+k).$$
\end{proposition}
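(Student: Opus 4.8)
The plan is to bound the divisor sum $\sum_{a \le A} \sum_{b \le B} \tau(kab^2+1)$ by exploiting the identity $\tau(n) = \sum_{d \mid n} 1 = 2 \sum_{d \mid n, d \le \sqrt{n}} 1 - [\text{$n$ a perfect square}]$, so that up to a harmless error it suffices to count pairs $(d,m)$ with $dm = kab^2+1$ and $d \le \sqrt{kab^2+1} \ll \sqrt{k}(A+B)$. Writing this out, we need to estimate
$$ \sum_{a \le A} \sum_{b \le B} \#\{ d : d \mid kab^2+1,\ d \le \sqrt{kab^2+1}\}, $$
which, after swapping the order of summation, becomes a count of quadruples $(a,b,d,m)$ with $a \le A$, $b \le B$, $dm = kab^2+1$, and $d$ in the smaller range. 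The key observation (this is Erdős's trick from \cite{erdos}) is that once $d$ is fixed, the congruence $kab^2 \equiv -1 \md d$ constrains the pair $(a,b)$, and we can count the solutions $(a,b)$ to this congruence in the box $[1,A]\times[1,B]$ fairly efficiently.

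The main steps, in order, are as follows. First, I fix $d$ and count $N_d := \#\{(a,b) \in [1,A]\times[1,B] : kab^2 \equiv -1 \md d\}$. Since $kab^2 \equiv -1 \md d$ forces $\gcd(ab,d)=1$, for each fixed $b$ coprime to $d$ the variable $a$ runs over a single residue class mod $d/\gcd(k,d)$ — but in fact since $-1$ is a unit, $\gcd(k,d)$ must be $1$, so $a$ runs over exactly one residue class modulo $d$. Hence $N_d \ll B (A/d + 1) = AB/d + B$ crudely; summing $AB/d$ over $d \le \sqrt{k}(A+B)$ gives $AB \log(\sqrt{k}(A+B)) \ll AB\log(A+B)\log(1+k)$, which is the desired shape, but the error term $\sum_d B \ll B \sqrt{k}(A+B)$ is far too large. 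The fix is the standard hyperbola/Dirichlet trick: instead of always summing over the small divisor $d \le \sqrt{n}$, split $dm = kab^2+1$ according to whether $d \le D$ or $m \le D$ for a well-chosen threshold $D$ (of size roughly $\sqrt{AB}$ up to $k$-factors). For $d \le D$ we use the count of $a$ in a residue class: the total is $\ll \sum_{d \le D}(AB/d + B) \ll AB\log D + BD$. By the symmetric argument (fixing the divisor $m \le D$ and solving $kab^2 \equiv -1 \md m$ the same way — note the roles of $d$ and $m$ are symmetric in the divisor equation), the complementary range contributes the same bound. Choosing $D \asymp A$ (or optimizing $D$) makes $BD \ll AB$, and $\log D \ll \log(A+B)$, so everything is $\ll AB\log(A+B)$; the $\log(1+k)$ factor enters only through the range of $d$, which runs up to $\ll \sqrt{k}(A+B)$ when we do need the full small-divisor count, contributing the single extra logarithm $\log(\sqrt{k}(A+B)) \ll \log(A+B) + \log(1+k) \ll \log(A+B)\log(1+k)$.

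I expect the main obstacle to be the bookkeeping in the residue-class count $N_d$ and making the hyperbola split genuinely symmetric: the equation $dm = kab^2 + 1$ is symmetric in $d,m$, so solving $kab^2 \equiv -1$ modulo the \emph{larger} divisor is not available, but solving it modulo the \emph{smaller} one always is, and one must check that in both halves of the split the modulus is $\le D$ so that the count $A/(\text{modulus}) + 1$ is summable. A secondary point requiring care is the dependence on $k$: since $k$ is not necessarily coprime to $d$, one should first note that $kab^2 \equiv -1 \md d$ has \emph{no} solutions unless $\gcd(k,d)=1$, which actually helps — it restricts $d$ to divisors coprime to $k$ and keeps the residue class for $a$ a single class modulo $d$. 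Finally, one must handle the perfect-square correction term $[\,kab^2+1 \text{ is a square}\,]$, but the number of such $(a,b)$ is trivially $O(AB)$ (indeed much smaller), so it is absorbed. Assembling these estimates yields the stated bound $\sum_{a \le A}\sum_{b \le B} \tau(kab^2+1) \ll AB\log(A+B)\log(1+k)$.
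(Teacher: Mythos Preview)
Your proposal has a genuine gap in the hyperbola split. You claim to decompose $\tau(kab^2+1)$ by counting divisors $d \le D$ together with codivisors $m \le D$, and then set $D \asymp A$. But the inequality $\tau(n) \le \#\{d \mid n : d \le D\} + \#\{m \mid n : m \le D\}$ is only valid when $D^2 \ge n$; for smaller $D$ there can be factorisations $n = dm$ with both $d,m > D$ that are simply missed. Since $n = kab^2+1$ can be as large as $kAB^2$, a valid split forces $D \gtrsim B\sqrt{kA}$, and then the error term $BD$ in your bound $\sum_{d \le D}(AB/d + B) \ll AB\log D + BD$ becomes $B^2\sqrt{kA}$, which dominates $AB\log(A+B)$ whenever $B$ is large compared to $A$ (take $A$ bounded and $B \to \infty$ to see the failure). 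The ``symmetric argument'' you invoke for the codivisor does not rescue this: the codivisor $m$ genuinely ranges up to $n/D$, not up to $D$, so the same $+1$ error reappears over an even longer range.

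The underlying obstruction is the asymmetry of the polynomial: it is linear in $a$ but quadratic in $b$. Fixing $b$ and putting $a$ in a single residue class modulo $d$ gives a good main term $AB/d$ but an error $B$ per modulus, and that error is not summable over $d \le \sqrt{n}$. The paper handles this by splitting into the regimes $A \ge B$ and $A \le B$. In the first regime your fix-$b$ argument (packaged as Corollary~\ref{eb}) already suffices. In the second regime one must instead fix $a$ and apply the full Erd\H{o}s-type estimate (Theorem~\ref{erdos-bound}) to the \emph{quadratic} polynomial $b \mapsto kab^2+1$; this reduces the problem to bounding $\sum_{a \le A}\sum_{m \le B} \rho_{ka}(m)/m$, where $\rho_{ka}(m)$ counts square roots of $-(ka)^{-1}$ modulo $m$. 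That double sum is then controlled by expanding $\rho_{ka}(m)$ via Jacobi symbols and exploiting cancellation through quadratic reciprocity --- and it is exactly this character-sum step (specifically the trivial bound on the intermediate range $A \le q \le kA$) that produces the $\log(1+k)$ factor, not the size of the divisor range as you suggest.
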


\begin{remark} Using the heuristic that $\tau(n) \sim \log n$ on the average (see \eqref{tau-heuristic}), one expects the true bound here to be $O( A B \log(A+B) )$.  The $\log(1+k)$ loss can be reduced (for some ranges of $A,B,k$, at least) by using more tools (such as the Polya-Vinogradov inequality), but this slightly inefficient bound will be sufficient for our applications.
\end{remark}

We prove Proposition \ref{kab} (as well as some variants of this estimate) in Section \ref{kab-sec}.  Our main tool is a more quantitative version of a classical bound of Erd\H{o}s \cite{erdos} on the sum $\sum_{n \leq N} \tau(P(n))$ for various polynomials $P$, which may be of independent interest; see Theorem \ref{erdos-bound}.

We also collect a number of auxiliary results concerning the quantities $f_i(n)$, some of which were in previous literature.  Firstly, we have a vanishing property at odd squares:

\begin{proposition}[Vanishing]\label{odd} For any odd perfect square $n$, we have $f_\I(n)=f_\II(n) = 0$.
\end{proposition}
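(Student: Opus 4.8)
The plan is to use the solvability criteria for Type I and Type II solutions that are alluded to in the introduction (and stated in full in Section~\ref{represent-sec}), and to show that each such representation forces a contradiction modulo $n$ when $n$ is an odd perfect square. Recall that a Type I solution has $n \mid x$ and $\gcd(n,yz)=1$; writing $x = an$ (or following whatever normalization is adopted in Section~\ref{represent-sec}) and clearing denominators in $\frac{4}{n} = \frac{1}{x}+\frac{1}{y}+\frac{1}{z}$, one arrives at an equation of the shape $yz \equiv (\text{something}) \cdot (-a) \pmod{n}$ together with a divisibility constraint; the upshot in the prime case is the classical fact that $-1$ (or more precisely some quantity like $-4a^2d$, matching the quadruple description $4acd = p+f$, $f \mid 4a^2d+1$ given in the introduction) must be a quadratic residue in the relevant modulus. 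The key point is that the parametrizations reduce solvability of \eqref{xyz} with $n$ in the divisibility role to a congruence condition that, modulo a prime $p$, reads roughly ``$-1$ is a QR mod some divisor of $p$'', and this congruence obstruction persists, indeed is sharpened, when the modulus is an odd square.

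Concretely, I would proceed as follows. First, recall from Section~\ref{represent-sec} the exact algebraic normal form for a Type~I solution: there exist parameters (say $a,c,d,f$ as in the introduction) with $n \mid x$, and the coprimality of $y,z$ to $n$ forces a relation of the form $1/y + 1/z = 4/n - 1/x$ whose numerator, after clearing denominators, yields a congruence $t^2 + (\text{linear}) \equiv 0 \pmod n$ or equivalently the requirement that a certain discriminant be a quadratic residue modulo $n$. Second, I would isolate the residue $r$ whose quadratic-residue status modulo $n$ is required for a Type~I solution to exist; from the prime-case analysis this $r$ should be (a unit times) $-1$. Third, I would invoke the elementary fact that if $n = m^2$ is an odd perfect square, then every unit $u$ modulo $n$ that is a quadratic residue modulo $n$ is in particular $\equiv \square \pmod{m}$, and more to the point, the specific residue $r$ appearing here fails to be a quadratic residue modulo $n$: for instance, if the requirement is that $r$ be a square mod $n=m^2$, one notes that being a square mod $m^2$ forces being a square mod $m$, and one checks that the relevant $r$ (again, essentially $-1$ scaled by a unit coming from the parameters, which one can show is itself a square mod $m$ in the relevant regime) cannot be, because of a local obstruction at some prime dividing $m$. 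Fourth, repeat the identical analysis for Type~II solutions, where $n \mid y, z$ and $\gcd(n,x)=1$; the parametrization $4acde = p + 4a^2d + e$, $acde \le 3p/2$ from the introduction similarly produces a quadratic-residue-type constraint modulo $n$, which is again violated when $n$ is an odd square.

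The main obstacle I anticipate is handling the interaction between the auxiliary parameters (the $a,c,d,f$ or $a,c,d,e$) and the modulus $n$: a priori these parameters need not be coprime to $n$, so one cannot naively pass to the unit group $(\Z/n\Z)^\times$. The fix is to track carefully which quantities the coprimality hypotheses on $y,z$ (resp.\ $x$) force to be coprime to $n$, and to factor out the common part; this is exactly the kind of bookkeeping that the representations in Section~\ref{represent-sec} are designed to make transparent, so in practice the proof should be short once those representations are in hand. A secondary point to be careful about: ``odd perfect square'' is essential --- for $n=4$ one has $f_\I(4), f_\II(4) \ne 0$ (indeed $f(4)=10$), so the argument must genuinely use oddness, which enters precisely in the step where $-1$ is declared a non-residue; for $n$ an odd prime power $p^{2k}$ this is the statement that $p \equiv 3 \pmod 4$ is \emph{not} assumed, so the obstruction must come from the square structure, not from $p \bmod 4$. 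I would phrase the non-residue step as: a nonzero square modulo an odd prime power $p^j$ with $j \ge 2$ lifts a nonzero square modulo $p$, and then exhibit the contradiction at that level. Assembling these pieces gives $f_\I(n) = f_\II(n) = 0$ for every odd perfect square $n$, as claimed. \qquad\usebox{\proofbox}
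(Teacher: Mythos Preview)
Your plan has a genuine gap: you are looking for the quadratic-residue obstruction in the wrong modulus. You repeatedly propose to show that some residue $r$ fails to be a square modulo $n$ (or modulo $m$ where $n=m^2$), but this cannot work without extra information about the prime factorisation of $n$, which you do not have. Indeed, the Jacobi symbol $\left(\frac{r}{n}\right)$ is automatically $+1$ for every $r$ coprime to $n$ when $n$ is a perfect square, so no Jacobi-symbol computation modulo $n$ will ever yield a contradiction; and since $n$ could be, say, $p^2$ with $p\equiv 1\pmod 4$, you cannot force $-1$ to be a non-residue at a prime dividing $n$ either. Your ``local obstruction at some prime dividing $m$'' step is therefore unsupported.

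The paper's proof uses the square hypothesis in the opposite direction. From the parametrisation one has the two identities
\[
4abd = ne+1 \qquad\text{and}\qquad ce = a+b,
\]
and the contradiction is obtained by computing Jacobi symbols \emph{modulo the auxiliary parameter $e$} (and modulo the odd part $q$ of $ab$), not modulo $n$. The role of ``$n$ is an odd perfect square'' is twofold: first, $\left(\frac{n}{q}\right)=1$ automatically, which via $ne\equiv -1\pmod q$ lets one relate $\left(\frac{e}{q}\right)$ to $\left(\frac{-1}{q}\right)$; second, $n\equiv 1\pmod 8$, which via $4abd=ne+1$ pins down $e\pmod 4$ (and, after a parity argument on $ab$, $e\pmod 8$). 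One then flips $\left(\frac{e}{q}\right)$ to $\left(\frac{q}{e}\right)$ by reciprocity and compares with $\left(\frac{ab}{e}\right)$, computed from $ab\equiv -a^2\pmod e$. The two computations disagree, and that is the contradiction. In short: the square hypothesis is used to \emph{eliminate} $n$ from the Jacobi-symbol bookkeeping, not to produce a non-residue modulo $n$. Your sketch never touches the parameter $e$, and without that the argument does not close.
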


This observation essentially dates back to Schinzel (see
\cite{guy}, \cite{mordell}, \cite{schinzel}) and Yamomoto (see \cite{yamamoto})
and is an easy application of quadratic reciprocity \eqref{quadratic}: for the
convenience of the reader, we give the proof in Section \ref{odd-sec}.  A
variant of this proposition was also established in \cite{bello}.  Note that
this does not disprove the Erd\H{o}s-Straus conjecture, since the inequality
\eqref{3ff} does not hold with equality on perfect squares; but it does
indicate a key difficulty in attacking this conjecture, in that when showing
that $f_\I(p)$ or $f_\II(p)$ is non-zero, one can only use methods that
\emph{must necessarily fail} when $p$ is replaced by an odd square such as
$p^2$, which already rules out many strategies 
(e.g. a finite set of covering congruence strategies, or the circle method).

Next, we establish some upper bounds on $f_\I(n), f_\II(n)$ for fixed $n$:

\begin{proposition}[Upper bounds]\label{upper}  For any $n \in \N$, one has
$$ f_\I(n) \ll n^{3/5 + O(1/\log \log n)}$$
and
$$ f_\II(n) \ll n^{2/5 + O(1/\log \log n)}.$$
In particular, from this and \eqref{4p} one can conclude that for any prime $p$ one has
$$ f(p) \ll p^{3/5 + O(1/\log \log p)}.$$
\end{proposition}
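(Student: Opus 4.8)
The plan is to get the bound on $f_\I(n)$ from the parametrization promised in the introduction: up to a factor of two, $f_\I(n)$ counts quadruples $(a,c,d,f)\in\N^4$ with $4acd = n+f$, with $f \mid 4a^2d+1$, and with the size restriction $acd \leq 3n/4$. First I would dispose of the divisibility-free variables: since $4acd \leq 3n$ and $f = 4acd - n < 3n$, the triple $(a,c,d)$ ranges over $O(n)$ choices (indeed $acd \leq 3n/4$ forces $\tau$-type bounds), and for each such triple $f$ is \emph{determined} as $f = 4acd-n$, so really the count is over triples $(a,c,d)$ subject to $f=4acd-n$ dividing $4a^2d+1$. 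The key observation is that $a$ and $d$ (hence $4a^2d+1$) together with the divisibility constraint pin down $f$ to be a divisor of $4a^2d+1$, so summing over $a,d$ the number of admissible $f$ is at most $\sum_{a,d}\tau(4a^2d+1)$ over an appropriate range; then $c$ is determined by $c = (n+f)/(4ad)$. To make the ranges efficient one splits according to the relative sizes of $a,d$: the worst case is when $a$ and $d$ are both around $n^{1/5}$ and $c$ around $n^{3/5}$, which is exactly where the exponent $3/5$ comes from. Running $\tau(m) \ll m^{O(1/\log\log m)} = n^{O(1/\log\log n)}$ on the divisor counts and optimizing the split gives $f_\I(n) \ll n^{3/5 + O(1/\log\log n)}$.

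For $f_\II(n)$ I would use the corresponding Type II parametrization: $f_\II(n)$ is essentially the number of quadruples $(a,c,d,e)\in\N^4$ with $4acde = n + 4a^2d + e$ and $acde \leq 3n/2$. Here $c$ is again determined by the other three variables via $c = (n+4a^2d+e)/(4ade)$ whenever that is an integer, so the count is at most the number of triples $(a,d,e)$ with, say, $ade \leq 3n/2$ and the divisibility $4ade \mid n + 4a^2d + e$ holding. Bounding this by the number of $(a,d,e)$ with $a d e \ll n$ and then optimizing over the ranges — the extremal configuration has $a,d,e$ all of size about $n^{1/5}$, but now only three "free" size-constrained variables rather than the divisor sum controlling a fourth — yields the smaller exponent $2/5 + O(1/\log\log n)$. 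In both cases the $\tau$-factors and the counts of lattice points in hyperbolic regions contribute only $n^{O(1/\log\log n)}$, which is absorbed into the error exponent.

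The final claim $f(p) \ll p^{3/5+O(1/\log\log p)}$ for $p$ prime is then immediate from $f(p) = 3f_\I(p) + 3f_\II(p)$ in \eqref{4p} together with the two bounds just established, since $p^{2/5+O(1/\log\log p)} \ll p^{3/5+O(1/\log\log p)}$.

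I expect the main obstacle to be organizing the case analysis on the relative sizes of the variables cleanly enough that the worst case genuinely gives exponent $3/5$ (resp. $2/5$) and no larger — in particular, verifying that once one variable is solved for by the equation and the divisibility constraint is used to control a second, the remaining two range over a box whose volume is optimized at the stated exponent. A secondary technical point is making sure the divisor bound $\tau(m)\ll m^{o(1)}$ is applied with the uniform shape $m^{O(1/\log\log m)}$ so that the error term has exactly the form claimed in the statement; this requires the standard maximal-order-of-$\tau$ estimate rather than a generic $\tau(m) = m^{o(1)}$.
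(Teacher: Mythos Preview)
Your proposal has a genuine gap at the central step. Parametrising Type I solutions by $(a,c,d,f)$ with $4acd=n+f$ and $f\mid 4a^2d+1$, and then ``splitting according to the relative sizes of $a,d$'', does not by itself give the exponent $3/5$. If you fix $a,d$ and bound the number of admissible $f$ by $\tau(4a^2d+1)\ll n^{O(1/\log\log n)}$, with $c$ then determined, you are left summing over all pairs $(a,d)$ with $ad\leq acd\leq 3n/4$; there are $\gg n$ such pairs, so the resulting bound is only $n^{1+O(1/\log\log n)}$. Nothing in the constraints you have written forces $a,d$ to be simultaneously small (your claimed extremal point $a,d\sim n^{1/5}$ is not a genuine constraint but a wish), and no amount of dyadic splitting of the $a,d$ ranges improves this, because the divisibility $f\mid 4a^2d+1$ is the only extra information and you have already spent it. The same objection applies to your Type II argument: counting triples $(a,d,e)$ with $ade\ll n$ gives $\gg n$ triples, not $n^{2/5}$.

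What the paper does is exploit the \emph{full} sextuple $(a,b,c,d,e,f)\in\Sigma^\I_n$ and the size bounds $acd\asymp n$, $ce\asymp b$, $bf\asymp an$ from Lemma~\ref{size} to obtain the product inequality
\[
e\cdot f\cdot (cd)^2\cdot (ac)\;=\;(acd)^2\,\frac{ce}{b}\,\frac{bf}{a}\;\ll\;n^3,
\]
so that at least one of the four quantities $e,f,cd,ac$ is $O(n^{3/5})$. Each of these four cases is then handled by a \emph{different} identity among \eqref{I-1}--\eqref{I-9} (namely \eqref{I-1}, \eqref{I-6}, \eqref{I-9}, \eqref{I-8} respectively) together with the divisor bound, each time determining the remaining three coordinates in $n^{O(1/\log\log n)}$ ways. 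The Type~II argument is parallel, with $e^2\cdot(ad)\cdot(ac)\cdot(cd)=(acde)^2\ll n^2$ forcing one of $e,ad,ac,cd$ to be $O(n^{2/5})$. The essential point you are missing is that one must use the redundancy of the parametrisation --- any three of the six coordinates determine the rest, and different triples are governed by different algebraic identities --- so that whichever small quantity the pigeonhole hands you, there is a matching identity ready to apply the divisor bound.
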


This should be compared with the recent result in \cite{browning}, which gives
the bound $f(n) \ll_\eps n^{2/3+\eps}$ for all $n$ and all $\eps>0$.  For
composite $n$ the treatment of parameters dividing $n$ appears to be more
complicated and here we concentrate on those two cases that are motivated by
the Erd\H{o}s-Straus equation for prime denominator.
 
We prove
this proposition in Section \ref{upper-sec}.  

The main tools here are the multiple representations of Type I and Type II
solutions available (see Section \ref{represent-sec}) and the divisor bound
\eqref{tau-divisor}.  The values of $f(p)$ appear to fluctuate in some respects
as the values of the divisor function. The average
values  of $f(p)$ behave much more regularly. 

Moreover, in view of Theorem \ref{main}, one might also expect to have $f(n) \ll_\eps n^\eps$ for any $\eps>0$, but such logarithmic-type bounds on solutions to Diophantine equations seem difficult to obtain in general (Proposition \ref{upper} appears to be the limit of what one can obtain purely from the divisor bound \eqref{tau-divisor} alone).

In the reverse direction, we have the following lower bounds on $f(n)$ for various sets of $n$:

\begin{theorem}[Lower bounds] {\label{lowerboundturankubilius}}
For infinitely many $n$, one has
\[f(n)\geq \exp ((\log 3 +o(1))\frac{\log n}{\log \log n}),\]
where $o(1)$ denotes a quantity that goes to zero as $n \to \infty$.

For any function $\xi(n)$ going to $+\infty$ as $n \to \infty$, one has
\[f(n)\geq 
\exp \left( \frac{\log 3}{2}\log \log n -
O(\xi(n) \sqrt{\log \log n})\right) \gg (\log n)^{0.549}\]
for all $n$ in a subset $A$ of natural numbers of density $1$ (thus $|A \cap \{1,\ldots,N\}|/N \to 1$ as $N \to \infty$).

Finally, one has
\[f(p)\geq 
\exp \left( (\frac{\log 3}{2} - o(1)) \log \log p \right) \gg (\log p)^{0.549}\]
for all primes $p$ in a subset $B$ of primes of relative density $1$ (thus $|\{ p \in B: p \leq N\}|/{|\{ p: p \leq N \}|} \to 1$ as $N \to \infty$).
\end{theorem}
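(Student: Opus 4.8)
The plan is to exhibit, for each $n$, an explicit family of Type I (or Type II) solutions whose size is governed by a multiplicative function of $n$, and then to invoke the Turán--Kubilius inequality (or its prime analogue) to control the typical size of that multiplicative function. The starting point is one of the classical parametric families recorded in Section \ref{represent-sec}: for a suitable divisor $d$ of $n$, a representation of $4/n$ as a sum of three unit fractions can be built whenever a certain congruence $-n \equiv \square \md{d}$ (or a divisibility condition of the shape $d \mid 4a^2 + \text{something}$) is solvable; the number of admissible $d$ is then bounded below by a sum over divisors of $n$ of an indicator of such a congruence. I would first isolate a clean sufficient condition of this type, so that for a squarefree $n$ built from primes $p \equiv 1 \md 4$ (or in whatever residue class makes $-1$, and hence $-n$, a square mod each $p$) one gets at least $c \cdot 3^{\omega(n)}$ solutions, where $\omega(n)$ is the number of distinct prime factors. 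The factor $3$ (rather than $2$) comes from the permutations of $(x,y,z)$ together with the choice encoded in the divisor structure, matching the constant $\log 3$ in the statement.

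For the first assertion (infinitely many $n$ with $f(n) \geq \exp((\log 3 + o(1)) \tfrac{\log n}{\log\log n})$), I would take $n$ to be the product of the first $k$ primes in the relevant residue class. By the prime number theorem in arithmetic progressions, $\log n \sim \tfrac{1}{\varphi(q)} \cdot (\text{primorial-type sum}) \sim p_k/\varphi(q) \cdot(\cdots)$, so $\omega(n) = k \sim \log n / \log\log n$, and then $f(n) \geq c\cdot 3^{\omega(n)} = \exp((\log 3 + o(1)) \log n/\log\log n)$, exactly as claimed. This is the maximal-order regime and is essentially a computation once the parametric lower bound $f(n) \gg 3^{\omega(n)}$ is in hand.

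For the second and third assertions I would pass from maximal order to normal order. The Turán--Kubilius inequality gives that $\omega(n)$ has normal order $\log\log n$ with fluctuations of size $\sqrt{\log\log n}$; more precisely, for $\xi(n) \to \infty$ the set of $n$ with $\omega(n) \geq \log\log n - \xi(n)\sqrt{\log\log n}$ has density $1$. However, one must restrict to prime factors lying in the good residue class, so what is actually needed is the normal order of $\omega_q(n) := \#\{p \mid n : p \equiv a \md q\}$, which by the same second-moment argument is $\tfrac{1}{\varphi(q)} \log\log n$ on a density-$1$ set. Feeding $\omega_q(n) \geq (\tfrac{1}{\varphi(q)} - o(1))\log\log n$ into $f(n) \gg 3^{\omega_q(n)}$ yields $f(n) \geq \exp((\tfrac{\log 3}{\varphi(q)} - o(1))\log\log n)$; choosing the congruence condition so that $\varphi(q) = 2$ (e.g. working modulo $4$, so that the good primes are those $\equiv 1 \md 4$, which do have density $1/2$) gives the exponent $\tfrac{\log 3}{2}\log\log n$ and the numerical bound $(\log n)^{0.549}$ since $\tfrac{\log 3}{2} = 0.5493\ldots$. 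For the prime case, one instead needs the distribution of $\omega_q(p-1)$ (or of whichever shifted quantity the parametrization attaches to a prime $p$) over primes $p \leq N$; here the Turán--Kubilius inequality for the sequence $\{p-1\}$, which follows from the Bombieri--Vinogradov theorem (available to us per the discussion after Theorem \ref{main}), gives that $\omega_q(p-1)$ has normal order $\tfrac{1}{\varphi(q)}\log\log p$ on a relative-density-$1$ set of primes, and the same arithmetic produces the stated bound.

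The main obstacle is the very first step: extracting a parametric family of solutions whose count is cleanly $\gg 3^{\omega(n)}$ (or $\gg 3^{\omega_q(n)}$ with $\varphi(q)$ as small as possible), rather than merely $\gg 2^{\omega(n)}$, and doing so with a congruence condition satisfied by a positive-proportion — ideally half — of all primes, so that the density-$1$ statements go through. Getting the constant to be exactly $\log 3$ requires finding, for each admissible prime divisor, genuinely three (not two) independent "choices" in the construction that survive the Chinese Remainder Theorem gluing; this is where one has to be careful, and it is the reason the theorem is phrased with $\log 3$ rather than a larger constant. Once that combinatorial input is secured, the remainder is a standard normal-order argument via the second moment method / Turán--Kubilius inequality, with Bombieri--Vinogradov supplying the analogue over shifted primes.
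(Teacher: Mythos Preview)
Your overall architecture is correct and matches the paper's: establish a lower bound of the form $f(n) \gg 3^{\omega'(n)}$ for some restricted prime-divisor-counting function $\omega'$, then feed in the maximal and normal orders of $\omega'$ via the Tur\'an--Kubilius inequality (and its shifted-prime analogue for the third assertion). Where your proposal differs from the paper is in how the bound $f(n) \gg 3^{\omega'(n)}$ is obtained, and the paper's route is substantially simpler than the three-term parametric construction you are envisioning.

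The paper sidesteps your ``main obstacle'' entirely by reducing to \emph{two} unit fractions. Writing $1/y = 1/(2y) + 1/(2y)$ gives $f(n) \geq g_2(4,n)$, where $g_2(m,n)$ counts solutions to $m/n = 1/x + 1/y$. This equation is equivalent to $(mx-n)(my-n) = n^2$, so the solution count is governed by divisors of $n^2$; for squarefree $n$ one has $\tau(n^2) = 3^{\omega(n)}$, and this is the true origin of the constant $\log 3$ (not permutations of $(x,y,z)$). The paper quotes the resulting bound $g_2(m,n) \gg 3^s$ for $n$ a product of $s$ primes congruent to $-1 \md m$ from \cite{browning}, giving $f(n) \gg 3^{w_4(n)}$ with $w_4$ counting prime factors congruent to $-1 \md 4$ (note: $-1$, not $+1$ as you guessed). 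Your suggested direct construction of Type~I/II solutions would also work in principle, but securing exactly three independent choices per prime, as you note yourself, is delicate; the two-fraction route makes it automatic.

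For the prime case you are right that one needs a shift, but you leave the identity unspecified. The paper splits on $p \md 4$: for $p = 4t-1$ the identity $4/p = 4/(p+1) + 1/(tp)$ yields $f(p) \geq g_2(4,p+1) \gg 3^{w_4(p+1)}$, while for $p \equiv 1 \md 4$ one uses $4/p = 1/((p+3)/4) + 3/(p(p+3)/4)$ to get $f(p) \gg 3^{w_3(p+3)}$. The normal order of $w_m$ over shifted primes is then supplied by a theorem of Barban (stated in the appendix as Lemma~\ref{barban-turankubilius}); Bombieri--Vinogradov would indeed also suffice here, so your instinct is sound, but the paper invokes the ready-made result instead.
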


As the proof shows the first two lower bounds are already valid for sums of 
two unit fractions. The result directly follow 
from the growth of certain divisor functions.
An even better model for $f(n)$ is a suitable superposition of several
divisor functions. The proof will be in Section \ref{lowerboundsec-2}.

Finally, we consider (following \cite{mordell}, \cite{schinzel}) the question of finding polynomial solutions to \eqref{xyz}.  Let us call a primitive residue class $n = r \mod q$ \emph{solvable by polynomials} if there exist polynomials $P_1(n), P_2(n), P_3(n)$ which take positive integer values for all sufficiently large $n$ in this residue class (so in particular, the coefficients of $P_1,P_2,P_3$ are rational), and such that
$$ \frac{4}{n} = \frac{1}{P_1(n)} + \frac{1}{P_2(n)} + \frac{1}{P_3(n)}$$
for all $n$.  Here we recall that a residue class $r \mod q$ is \emph{primitive} if $r$ is coprime to $q$.  One could also consider non-primitive congruences, but these congruences only contain finitely many primes and are thus of less interest to solving the Erd\H{o}s-Straus conjecture (and if the Erd\H{o}s-Straus conjecture held for a common factor of $r$ and $q$, then the residue class $r \mod q$ would trivially be solvable by polynomials.

By Dirichlet's theorem, the primitive residue class $r \mod q$ contains arbitrarily large primes $p$.  For each large prime $p$ in this class, we either have one or two of the $P_1(p), P_2(p), P_3(p)$ divisible by $p$, as observed previously.  For $p$ large enough, note that $P_i(p)$ can only be divisible by $p$ if there is no constant term in $P_i$.  We thus conclude that either one or two of the $P_i(n)$ have no constant term, but not all three.  Let us call the congruence \emph{Type I solvable} if one can take exactly one of $P_1,P_2,P_3$ to have no constant term, and \emph{Type II solvable} if exactly two have no constant term.  Thus every solvable primitive residue class $r \mod q$ is either Type I or Type II solvable.

It is well-known (see \cite{Oblath:1950, mordell}) that any primitive residue class $n = r \mod 840$ is solvable by polynomials unless $r$ is a perfect square.  On the other hand, it is also known (see \cite{mordell}, \cite{schinzel}) that a primitive congruence class $n = r \mod q$ which is a perfect square, cannot be solved by polynomials (this also follows from Proposition \ref{odd}).  The next proposition essentially classifies all solvable primitive congruences.

\begin{proposition}[Solvable congruences]\label{res-class} Let $q \mod r$ be a primitive residue class.  If this class is Type I solvable by polynomials, then all sufficiently large primes in this residue class lie in one of a finite number of residue classes from one of following families:
\begin{itemize}
\item $\{ n = -f \mod 4ad\}$, where $a,d,f \in \N$ are such that $f|4a^2 d+1$. \cite{nakayama} 
\item $\{ n = -f \mod 4ac \} \cap \{n = -c/a \mod f \}$, where $a,c,f \in \N$ are such that $(4ac,f)=1$. \cite{yamamoto}
\item $\{ n = -f \mod 4cd \} \cap \{n^2 = -4c^2d \mod f \}$, where $c,d,f \in \N$ are such that $(4cd,f)=1$.
\item $\{ n = -1/e \mod 4ab \}$, where $a,b,e \in \N$ are such that $e \mid a+b$ and $(e,4ab)=1$. \cite{aigner}, \cite{rosati} 
\end{itemize}
Conversely, any residue class in one of the above four families is solvable by polynomials.

Similarly, if $q \mod r$ is Type II solvable by polynomials, then all sufficiently large primes in this residue class lie in one of a finite number of residue classes from one of the following families:
\begin{itemize}
\item $-e \mod 4ab$, where $a,b,e \in \N$ are such that $e \mid a+b$ and $(e,4ab)=1$. \cite{aigner} 
\item $-4a^2d \mod f$, where $a,d,f \in \N$ are such that $4ad \mid f+1$. \cite{vaughan}, \cite{rosati} 
\item $-4a^2d-e \mod 4ade$, where $a,d,e \in \N$ are such that $(4ad,e)=1$. \cite{nakayama} 
\end{itemize}
Conversely, any residue class in one of the above three families is solvable by polynomials. 
\end{proposition}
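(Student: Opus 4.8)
The plan is to translate ``solvable by polynomials'' into an identity of rational functions and then read off the admissible shapes from degree and integrality constraints, the latter sharpened by primitivity of the class. So suppose the primitive class $r \bmod q$ is Type I solvable, witnessed by $P_1,P_2,P_3\in\Q[n]$ that are positive-integer valued for all large $n\equiv r\pmod q$, with $P_1$ the unique one having no constant term. Since $\frac4n=\frac1{P_1(n)}+\frac1{P_2(n)}+\frac1{P_3(n)}$ holds for infinitely many $n$ and both sides are rational functions, it holds in $\Q(n)$, and clearing denominators gives the polynomial identity $4P_1P_2P_3=n(P_1P_2+P_1P_3+P_2P_3)$. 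Two elementary remarks will be used repeatedly: first, $P_1(0)=0$, so $P_1(n)=nA(n)$ with $A\in\Q[n]$; second, a linear polynomial with zero constant term that is positive-integer valued on a primitive class must be $\alpha n$ with $\alpha\in\N$ (from $\alpha q,\alpha r\in\Z$ and $(r,q)=1$), and in general all leading coefficients are positive because $P_i(n)>0$ for large $n$.

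First I would locate the linear factor. Writing $d_i=\deg P_i$ and using positivity of the leading coefficients so that no cancellation occurs among the top-degree terms, a comparison of degrees in $4P_1P_2P_3=n(P_1P_2+P_1P_3+P_2P_3)$ forces $\min_i d_i=1$; in particular no $P_i$ is constant and at least one is linear. It cannot be $P_1$: if $P_1=\alpha n$ with $\alpha\in\N$, dividing the identity by $n$ gives $(4\alpha-1)P_2P_3=\alpha n(P_2+P_3)$, and setting $n=0$ yields $(4\alpha-1)P_2(0)P_3(0)=0$, which is absurd since $4\alpha-1\geq3$ and $P_2,P_3$ have nonzero constant terms. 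Hence, after relabelling, $P_3$ is linear, with nonzero constant term (Type I hypothesis) and positive leading coefficient. Subtracting, the task becomes: express the explicit rational function $\frac4n-\frac1{P_3(n)}=\frac{4P_3(n)-n}{nP_3(n)}$ as a sum $\frac1{P_1}+\frac1{P_2}$ of two ``unit-fraction'' rational functions with $P_1$ divisible by $n$ and $P_2$ not.

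The two-term subproblem I would resolve by the usual device: writing the target in lowest terms as $U/V$ in $\Q[n]$, the relation $\frac UV=\frac{P_1+P_2}{P_1P_2}$ gives $V\mid P_1P_2$, say $P_1P_2=VW$, whence $P_1+P_2=UW$, so $P_1,P_2$ are the roots of $T^2-UW\,T+VW$ and the discriminant $W(U^2W-4V)$ must be a square in $\Q[n]$. Running through the finitely many ways this can happen, and using the primitivity bookkeeping to decide which auxiliary quantities are forced to be integer constants and which are linear multipliers of the form $(n+f)/(4ad)$ and the like, one recovers exactly the four families listed --- the constants being $a,c,d,f$ and the linear multipliers the various ``$c$''. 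Crucially, the arithmetic side conditions ($f\mid4a^2d+1$; $(4ac,f)=1$; $n^2\equiv-4c^2d\pmod f$; $e\mid a+b$ with $(e,4ab)=1$) appear precisely as the requirement that the resulting $x,y,z$ --- which the rational identity only guarantees to be \emph{rational} --- be \emph{integral} on the whole class. The Type II case follows the same scheme: now $P_2=nB$ and $P_3=nC$ have no constant term while $P_1$ does, one solves $\frac1{P_1}=\frac4n-\frac1{nB}-\frac1{nC}$ to get $P_1=\frac{nBC}{4BC-B-C}$ (so $4BC-B-C\mid nBC$ in $\Q[n]$, and $4B(0)C(0)-B(0)-C(0)=0$ since $P_1(0)\neq0$), and the same degree count plus primitivity analysis yields the three residue classes. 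For the converse direction one simply writes down, for a representative class of each of the seven shapes, the classical parametric solutions of Nakayama, Aigner, Rosati and Vaughan quoted in the statement and checks by direct algebra that the three polynomials are positive-integer valued on the class and satisfy $\frac4n=\sum1/P_i$ (e.g.\ for $\{n\equiv-f\bmod 4ad\}$ one uses the integer multiplier $c=(n+f)/(4ad)$ together with $f\mid4a^2d+1$).

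The step I expect to be the main obstacle is proving \emph{exhaustiveness} in the middle: turning ``$W(U^2W-4V)$ is a perfect square in $\Q[n]$'' into the finite list of arithmetic conditions without losing a family, while simultaneously ruling out genuinely higher-degree polynomial solutions. The degree bookkeeping must be handled carefully, since a priori $P_1$ (respectively $P_2,P_3$ in the Type II case) can have large degree even though one of the three polynomials is linear, and one must show the ``extra'' factors are forced to cancel; keeping track of which parameters are constants versus linear-in-$n$ throughout this reduction is where the real work lies.
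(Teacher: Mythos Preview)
Your approach is genuinely different from the paper's, and the gap you flag at the end is real and substantial. The paper does \emph{not} work directly with the polynomial identity $4P_1P_2P_3=n(P_1P_2+P_1P_3+P_2P_3)$. Instead it invokes the structure theory built in Section~\ref{represent-sec}: by Proposition~\ref{type-1} (resp.\ \ref{type-2}), every Type I (resp.\ II) solution for a given $p$ comes from a unique $\N$-point $(a,b,c,d,e,f)$ of $\Sigma^\I_p$ (resp.\ $\Sigma^\II_p$) with $a,b,c$ having no common factor. Running the Euclidean algorithm in $\Q[p]$ on $x(p),y(p),z(p)$ shows that $d$, and then $a,b,c,e,f$, are themselves polynomials in $p$. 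The decisive step is Lemma~\ref{size}: the inequality $a(p)c(p)d(p)\ll p$ (and $f(p)\ll p$) forces at least two of the polynomials $a,c,d$ to be constant and $\deg f\leq 1$; for Type II, $a(p)c(p)d(p)e(p)\ll p$ forces three of $a,c,d,e$ to be constant. The proof then reduces to a short finite case split on \emph{which} parameters are constant, reading off the residue conditions directly from the identities \eqref{I-1}--\eqref{I-9} (or \eqref{II-1}--\eqref{II-9}). One case, $c,d$ constant with $f$ linear, is even disposed of by irreducibility of $p^2+4c^2d$ over $\R$.

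Your degree count correctly gives $\min_i\deg P_i=1$ and rules out $P_1$ as the linear one, but once $P_3$ is linear the identity imposes \emph{no} further constraint on $\deg P_1,\deg P_2$: for $d_1,d_2\geq 1$ both sides already have degree $d_1+d_2+1$. So your auxiliary polynomial $W$ could a priori have any degree, and the entire classification would have to emerge from the perfect-square condition on $W(U^2W-4V)$---which you have not carried out, and which amounts to reproving Lemma~\ref{size} in a less convenient coordinate system. The paper's route sidesteps this completely: the size bound is imported wholesale, converting the problem to a handful of algebraic substitutions rather than a discriminant analysis. What your approach would buy, if completed, is self-containment (no reliance on the $\Sigma$-variety machinery); what the paper's approach buys is brevity and transparency, since the hard work was already done in establishing the coordinate system and its bounds.
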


As indicated by the citations, mpst of these residue classes were observed to be solvable by polynomials in previous literature, but one of the conditions listed here appears to be new, and they form the essentially complete list of all such classes.  We prove Proposition \ref{res-class} in Section \ref{solution}.

\begin{remark} The results in this paper would also extend (with minor changes)
  to the more general situation in which the numerator $4$ in \eqref{4pn} is
  replaced by some other fixed positive integer, a situation considered first
  by Sierpi\'{n}ski and Schinzel (see e.g. \cite{sierpinski, vaughan,
    Palama:1958, Palama:1959, Stewart:1964}). 
\end{remark}

 We will not detail all of these extensions here but in 
Section \ref{lower-3} we extend our study of the average number of solutions to the 
more general question on sums of $k$ unit fractions 
\begin{equation}\label{m/nsumofk}
\frac{m}{n}=\frac{1}{t_1} + \frac{1}{t_2} + \cdots +
\frac{1}{t_k}.
\end{equation}
If $m \leq k$ the greedy algorithm (in this case 
also known as Fibonacci-Sylvester algorithm) shows there is a solution. Indeed, 
let $n=my+r$ with $0 <r<m$, then 
$\frac{m}{n}-\frac{1}{y+1}=\frac{m-r}{n(y+1)}$ has a smaller numerator, and
inductively a solution with $k \leq m$ is constructed.
For an alternative method (especially if $m=k=4$)
see also Schinzel \cite{schinzel:1956}.

If $m>k\geq 3$, and the $t_i$ are positive integers,
then it is an open problem if for each sufficiently large $n$ there is at least one solution. 
The Erd\H{o}s-Straus conjecture with $m=4, k=3$, discussed above,  is the most prominent case.
If  $m$ and $k$ are fixed, one can again establish sets of residue classes, such that
\eqref{m/nsumofk} is generally soluble if $n$ is in any of these residue
classes.

The problem of classifying solutions of \eqref{m/nsumofk} has been
studied by Rav \cite{rav}, S\'{o}s \cite{Sos:1905} and Elsholtz \cite{els}. 
Moreover Viola \cite{viola},
Shen \cite{shen} and Elsholtz \cite{elsholtz:2001}
have used a suitable subset of these solutions to give (for fixed $m > k\geq
3$) quantitive bounds 
on the number of those integers $n \leq N$, for which  \eqref{m/nsumofk} does not have
any solution.

In order to study, whether there is at least one solution, it is again
sufficient to concentrate on prime denominators. The average number of
solutions is smaller when averaging over the primes only, but we intend to
prove that even in the prime case the average number of solutions grows quickly, when $k$ increases.

We will focus on the case of \emph{Type II solutions}, in which
$t_2,\ldots,t_k$ are divisible by $n$.  The classification of solutions that we
give below also works for other divisibility patterns, but Type II solutions
are the easiest to count, and so we shall restrict our attention to this
case. Strictly speaking, the definition of a Type II solution here is slightly
different from that discussed previously, because we do not require that $t_1$
is coprime to $n$.  However, this coprimality is automatic when $n$ is prime (otherwise the right-hand side of \eqref{m/nsumofk} would only be at most $k/n$).  For composite $n$, it is possible to insert this condition and still obtain the lower bound \eqref{many-1}, but this would complicate the argument slightly and we have chosen not to do so here.

For given $m,k,n$, let $f_{m,k,\II}(n)$ denote the number of Type II solutions.  Our main result regarding this quantity is the following lower bound on this quantity:

\begin{theorem}\label{many-solutions}
Let $m > k\geq 3$ be fixed.  Then, for $N$ sufficiently large, one has
\begin{equation}\label{many-1}
\sum_{n \leq N} f_{m,k,\II}(n) \gg_{m,k} N (\log N)^{2^{k-1}-1}
\end{equation}
and
\begin{equation}\label{many-2}
\sum_{p \leq N} f_{m,k,\II}(p) \gg_{m,k} \frac{N (\log N)^{2^{k-1}-2}}{\log \log N}.
\end{equation}
\end{theorem}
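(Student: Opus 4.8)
The plan is to parametrize Type II solutions of \eqref{m/nsumofk} explicitly and then reduce the counting problem to a divisor-type sum that is handled by iterating the standard fact $\sum_{n\le N}\tau_j(n)\asymp_j N(\log N)^{j-1}$ (equivalently, $\sum_{n\le N}\tau(n)\asymp N\log N$ applied repeatedly). First I would record the relevant parametrization: writing $t_i = n s_i$ for $i=2,\dots,k$ (so $n\mid t_2,\dots,t_k$) and clearing denominators in $\frac{m}{n}=\frac{1}{t_1}+\sum_{i=2}^k\frac{1}{ns_i}$, one gets $m s_2\cdots s_k t_1 = n s_2\cdots s_k + t_1\big(\sum_{i=2}^k \prod_{j\ne i} s_j\big)$, so $t_1$ must divide $n s_2\cdots s_k$, and $n$ is then determined (as a primitive residue class) by the $s_i$ and the divisor of $t_1$. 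The upshot, exactly in the spirit of the Type II representations collected in Section \ref{represent-sec} for the case $m=4,k=3$, is that a Type II solution is essentially a choice of $(s_2,\dots,s_k)\in\N^{k-1}$ together with a divisor of a certain expression in the $s_i$; the product $t_2\cdots t_k$ is of size $\asymp n^{k-1}$ times a bounded power, so the relevant ranges are $s_i \ll N^{O(1)}$.

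Next I would set up the lower bound. Fixing a convenient sub-family — for instance restricting all $s_i$ to a dyadic-type range and restricting $t_1$ to be a divisor lying in a controlled interval — reduces $\sum_{n\le N} f_{m,k,\II}(n)$ from below to a sum of the form $\sum_{s_2,\dots,s_k} \tau(Q(s_2,\dots,s_k))$ over a box of sidelengths that are fixed powers of $N$, where $Q$ is the relevant multilinear form. The key point is that $Q$ factors (up to the shape of the form) so that summing the divisor function over one variable at a time produces one extra factor of $\log N$ at each stage, and the number of "free" summations is such that one accumulates $(\log N)^{2^{k-1}-1}$. The exponent $2^{k-1}$ is what one expects: each of the $k-1$ variables $s_2,\dots,s_k$, and then the divisor choice on top, contributes, and the doubling comes from the fact that the natural parametrization already nests divisor conditions (a divisor of a product of two quantities, each itself ranging over such families), so the count is governed by $\tau_{2^{k-1}}$-type behaviour rather than $\tau_{k}$-type. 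I would make this precise by induction on $k$: the base case $k=3$ is exactly the $N\log^3 N$ lower bound for $f_\II$ from Theorem \ref{main} (here $2^{k-1}-1 = 3$), and the inductive step peels off one variable $s_k$, using $\sum_{s_k\le S}\tau(\text{linear in }s_k)\gg S\log S$ uniformly in the other parameters, which roughly squares the number of accumulated logs contributed by the divisor structure.

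For the prime version \eqref{many-2} I would follow the same route but replace the unconditional sum over $n$ by a sum over primes $p$ in the relevant residue classes, using the Bombieri--Vinogradov inequality to guarantee that the primes equidistribute among the admissible residue classes up to moduli $N^{1/2-o(1)}$, exactly as in the proof of the lower bounds in Theorem \ref{main}; the loss of $1/\log\log N$ and of one power of $\log N$ compared to \eqref{many-1} is the familiar cost of restricting to primes and of the Brun--Titchmarsh/Bombieri--Vinogradov step (one $\log$ is lost passing from $n\le N$ to $p\le N$ via the prime number theorem, accounting for $2^{k-1}-2$, and the $\log\log N$ is the same technical artefact discussed after the statement of Theorem \ref{main}). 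I expect the main obstacle to be bookkeeping: identifying the right sub-family of solutions so that (a) the multilinear form $Q$ genuinely splits enough to give the full exponent $2^{k-1}-1$ rather than something smaller, and (b) the side conditions (coprimality, positivity, the constraint $t_2\cdots t_k \ll$ a power of $n$, and for the prime case the level of distribution) are all simultaneously satisfiable on a set large enough not to lose any powers of $\log N$. Everything else is a routine iteration of the divisor-sum estimate and an appeal to Bombieri--Vinogradov.
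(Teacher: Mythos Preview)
Your proposal has a genuine gap in the parametrization step, and it is exactly the step that determines the exponent $2^{k-1}-1$.

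You parametrize a Type II solution by $s_2,\ldots,s_k$ (with $t_i=ns_i$) together with a divisor of the form $Q(s_2,\ldots,s_k)$. That is $k-1$ free parameters plus one divisor condition. Summing $\tau$ over a linear variable contributes one power of $\log N$, and iterating over $k-1$ variables contributes $O(k)$ logarithms in total; there is no mechanism in what you wrote that turns this into $2^{k-1}-1$ logarithms. Your inductive claim that ``peeling off one variable $s_k$ \ldots roughly squares the number of accumulated logs'' is not justified: $\sum_{s_k\leq S}\tau(\text{linear in }s_k)\gg S\log S$ adds one logarithm, it does not square anything. At best a $\tau_j$-type count over $j$ free parameters yields $(\log N)^{j-1}$, so your scheme with $j=O(k)$ gives a polynomial-in-$k$ exponent, not the required exponential one.

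What the paper does instead is introduce $2^{k-1}-1$ \emph{independent} parameters $x_I$, one for each subset $I\subset\{1,\ldots,k\}$ with $2\in I$ and $I\neq\{2\}$; the integer $x_I$ encodes the part of the common factor structure of $t_2,\ldots,t_k$ shared exactly by the $t_i$ with $i\in I$. The ansatz (a generalisation of $(t_1,t_2,t_3)=(abd,acdn,bcdn)$) is then $t_j = bn\prod_{I\ni j}x_I$ for $j\geq 3$, with $t_1,t_2,b,n$ given by explicit formulas in the $x_I$ and an auxiliary parameter $e$. A square-free condition on $\prod_{I\neq\{1,2\}}x_I$ guarantees injectivity of the map $(x_I,e)\mapsto(t_1,\ldots,t_k)$. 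The lower bound \eqref{many-1} is then a direct count of admissible tuples: each $x_I$ ranges over an interval of length a fixed power of $N$, and an elementary lemma of the form
\[
\sum_{x_I}\frac{\mu^2(\prod x_I)}{\prod x_I}\gg (\log N)^{2^{k-1}-1}
\]
(one logarithm per parameter) finishes the job, with no divisor sums needed. For \eqref{many-2} one indeed uses Bombieri--Vinogradov as you suggest; the $\log\log N$ loss comes not from Brun--Titchmarsh but from a crude bound $\phi(y)/y\gg 1/\log\log N$ applied to a coprimality constraint $(q,r)=1$ that arises when restricting $n$ to primes. The missing idea in your plan is precisely this subset-indexed parametrization; without it the exponent you can reach is linear in $k$, not exponential.
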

Our emphasis here is on the exponential growth of the exponent.
In particular, as $k$ increases by one, the average number of solutions is
roughly squared.  The denominator of $\log \log N$ is present for technical reasons (due to use of the crude lower bound \eqref{phi-lower} on the Euler totient function), and it is likely that it could be eliminated (much as it is in the $m=4,k=3$ case) with additional effort.

\begin{remark} If we let $f_{m,k}(n)$ be the total number of solutions to \eqref{m/nsumofk} (not just Type II solutions), then we of course obtain as a corollary that
$$\sum_{n \leq N} f_{m,k}(n) \gg_k N (\log N)^{2^{k-1}-1}.$$
We do not expect the power of the logarithm to be sharp in this case (cf. Remark \ref{heath-remark}).  For instance, in \cite{huangandvaughan} it is shown that
$$\sum_{n \leq N} f_{m,2}(n) = \left(\frac{1}{\phi(m)}+o(1)\right) N \log^2 N$$
for any fixed $m$.
\end{remark}

Note that the equation \eqref{m/nsumofk} can be rewritten as
$$ \frac{1}{mt_1} + \cdots + \frac{1}{mt_k} + \frac{1}{-n} = 0,$$
which is primitive when $n$ is prime.  As a consequence, we obtain a lower bound for the number of integer points on the (generalised) Cayley surface:

\begin{corollary}
Let $k\geq 3$.
The number of integer points of the following generalization of Cayley's 
cubic surface, 
\[0=\sum_{i=0}^k \frac{1}{t_i},\]
with $t_i$ non-zero integers with $\min_i |t_i| \leq N$, is at least $c_k N(\log N)^{2^{k-1}-2} / \log\log N$ for some $c_k>0$ depending only on $k$.
\end{corollary}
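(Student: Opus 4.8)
\emph{Proof proposal.} The plan is to deduce the corollary directly from the prime bound \eqref{many-2} of Theorem \ref{many-solutions}, using the rewriting of \eqref{m/nsumofk} recorded immediately before the corollary. Fix $m := k+1$; then $m > k \geq 3$, so Theorem \ref{many-solutions} applies with implied constants depending only on $k$. Given an odd prime $p$ and a Type II solution $(t_1,\ldots,t_k) \in \N^k$ to $\frac{k+1}{p} = \frac{1}{t_1} + \cdots + \frac{1}{t_k}$, put
\[ s_0 := -p, \qquad s_i := (k+1)\, t_i \quad (1 \leq i \leq k). \]
These are non-zero integers (the $t_i$ being positive), and the one-line identity $\sum_{i=0}^k \frac{1}{s_i} = \frac{1}{k+1}\sum_{i=1}^k \frac{1}{t_i} - \frac{1}{p} = 0$ shows that $(s_0,\ldots,s_k)$ is an integer point on the surface $\sum_{i=0}^k 1/t_i = 0$. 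Moreover $\min_i |s_i| \leq |s_0| = p$, so if we restrict attention to primes $p \leq N$, the resulting points all satisfy $\min_i |s_i| \leq N$.

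Next I would verify that, as $p$ ranges over primes $\leq N$ and $(t_1,\ldots,t_k)$ over the Type II solutions counted by $f_{k+1,k,\II}(p)$, the assignment $(p; t_1,\ldots,t_k) \mapsto (s_0,\ldots,s_k)$ is injective: one recovers $p = -s_0$ and $t_i = s_i/(k+1)$ from the image, so distinct inputs give distinct integer points (here it is irrelevant whether $f_{k+1,k,\II}$ counts ordered tuples, and we make no attempt to produce \emph{all} integer points on the surface, only enough of them). Hence the number of integer points in question is at least
\[ \sum_{p \leq N} f_{k+1,k,\II}(p) \;\gg_{k}\; \frac{N (\log N)^{2^{k-1}-2}}{\log\log N} \]
by \eqref{many-2}, which is exactly the asserted bound, with $c_k$ depending only on $k$.

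The argument is essentially bookkeeping: all the analytic content, namely the construction and the count of the relevant family of Type II solutions, is already contained in Theorem \ref{many-solutions}. The only points needing a little care are checking that the displayed substitution genuinely produces integer points on the surface (the identity above), that the height constraint $\min_i |t_i| \leq N$ is respected (which is why one truncates the sum at $p \leq N$ and not at some larger threshold coming from the rescaled coordinates $(k+1) t_i$), and that the map does not collapse distinct solutions. None of these is a real obstacle; the substance of the corollary is inherited wholesale from \eqref{many-2}, and the main \emph{conceptual} step was choosing $m = k+1$ so that the implied constant depends on $k$ alone.
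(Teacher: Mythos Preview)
Your argument is correct and follows exactly the approach the paper indicates: the line immediately preceding the corollary rewrites \eqref{m/nsumofk} as $\frac{1}{mt_1}+\cdots+\frac{1}{mt_k}+\frac{1}{-n}=0$, and the corollary is then read off from \eqref{many-2} precisely as you do (your specific choice $m=k+1$ is harmless). The only cosmetic difference is that the paper leaves the injectivity and height-check implicit, whereas you spell them out.
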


Again, the double logarithmic factor should be removable with some additional effort, although the exponent $2^{k-1}-2$ is not expected to be sharp, and should be improvable also.

Finally, let us mention that there are many other problems on the number of solutions
of 
\begin{equation*}
\frac{m}{n}=\frac{1}{t_1} + \frac{1}{t_2} + \cdots +
\frac{1}{t_k}
\end{equation*}
which we do not study here.
Let us point to some further references:
\cite{sandor:2003},
\cite{browning}, \cite{chen-elsholtz-jiang},
\cite{elsholtz-heuberger-prodinger} study the number of 
solutions of $1$ as a sum of unit fractions.
\cite{CrootandDobbsandFriedlanderandHetzelandPappalardi} and
\cite{huangandvaughan} study the case $k=2$, also with varying numerator $m$.

Part of the first author's work on ths project was supported by the German National Merit Foundation.
The second author is supported by a grant from the MacArthur Foundation, by NSF
grant DMS-0649473, and by the NSF Waterman award.  The authors thank Nicolas
Templier for many helpful comments and references, and the referee and editor for many useful corrections and suggestions, as well as Serge Salez for pointing out an error in a previous version of the manuscript.  
The first author is very grateful to Roger Heath-Brown for very generous advice on the subject
(dating back as far as 1994).
Both authors are  particularly indebted to him for several remarks (including
Remark \ref{heath-remark}), and also for contributing some of the key arguments here 
(such as the lower bound on $\sum_{n \leq N} f_\II(n)$ and $\sum_{p \leq N}
f_\II(p)$)  which have been reproduced here with permission. 
The first author also wishes to thank Tim Browning, Ernie Croot and  Arnd Roth 
for discussions on the subject.
\section{Representation of Type I and Type II solutions}\label{represent-sec}

We now discuss the representation of Type I and Type II solutions.  There are many such representations in the literature (see e.g. \cite{aigner}, \cite{bello}, \cite{bernstein}, \cite{nakayama}, \cite{rav}, \cite{rosati}, \cite{vaughan}, \cite{webb}); we will remark how each of these representations can be viewed as a form of the one given here after describing a certain algebraic variety in coordinates.

For any non-zero complex number $n$, consider the algebraic surface
$$ S_n := \{ (x,y,z) \in \C^3: 4xyz = nyz + nxz + nxy\} \subset \C^3.$$
Of course, when $n$ is a natural number, $f(n)$ is nothing more than the number of $\N$-points $(x,y,z) \in S_n \cap \N^3$ on this surface.

It is somewhat inconvenient to count $\N$-points on $S_n$ directly, due to the fact that $x,y,z$ are likely to share many common factors.  To eliminate these common factors, it is convenient to lift $S_n$ to higher-dimensional varieties $\Sigma^\I_n$, $\Sigma^{\II}_n$ (and more specifically, to three-dimensional varieties in $\C^6$), which are adapted to parameterising Type I and Type II solutions respectively.  This will replace the three original coordinates $x,y,z$ by six coordinates $a,b,c,d,e,f$, any three of which can be used to parameterise $\Sigma^I_n$. or $\Sigma^{\II}_n$.  This multiplicity of parameterisations will be useful for many of the applications in this paper; rather than pick one parameterisation in advance, it is convenient to be able to pick and choose between them, depending on the situation.

We begin with the description of Type I solutions.
More precisely, we define $\Sigma^\I_n$ to be the set of all sextuples $(a,b,c,d,e,f) \in \C^6$ which are non-zero and obey the constraints
\begin{align}
4abd &= ne+1 \label{I-1}\\
ce &= a+b \label{I-2}\\
4abcd &= na + nb + c \label{I-3}\\
4acde &= ne + 4a^2 d + 1 \label{I-4}\\
4bcde &= ne + 4b^2 d + 1\label{I-5}\\
4acd &= n + f \label{I-6}\\
ef &= 4a^2 d + 1 \label{I-7}\\
bf &= na + c \label{I-8}\\
n^2 + 4c^2d &= f(4bcd-n).\label{I-9}
\end{align}

\begin{remark} There are multiple redundancies in these constraints; to take just one example, \eqref{I-9} follows from \eqref{I-3} and \eqref{I-6}.  One could in fact specify $\Sigma^\I_n$ using just three of these nine constraints if desired.  However, this redundancy will be useful in the sequel, as we will be taking full advantage of all nine of these identities.
\end{remark}

The identities \eqref{I-1}-\eqref{I-9} form an algebraic set that can be parameterised (perhaps up to some bounded multiplicity) by fixing three of the six coordinates $a,b,c,d,e,f$ and solving for the other three coordinates.  For instance, using the coordinates $a,c,d$, one easily verifies that
$$ \Sigma^\I_n = \left\{ (a, \frac{na+c}{4acd-n}, c, d, \frac{4a^2d+1}{4acd-n}, 4acd-n): a,c,d \in \C^3; 4acd \neq n \right\}$$
and similarly for the other $\binom{6}{3}-1 = 14$ choices of three coordinates;
we omit the elementary but tedious computations.  Thus we see that $\Sigma^\I_n$ is a three-dimensional
algebraic variety.  From \eqref{I-3} we see that the map
$$ \pi^\I_n: (a,b,c,d,e,f) \mapsto (abdn, acd, bcd)$$
maps $\Sigma^\I_n$ to $S_n$.  After quotienting out by the dilation symmetry
\begin{equation}\label{dilation}
(a,b,c,d,e,f) \mapsto (\lambda a, \lambda b, \lambda c, \lambda^{-2} d, e, f)
\end{equation}
of $\Sigma^I_n$, this map is injective.  

If $n$ is a natural number, then $\pi^I_n$ clearly maps $\N$-points of $\Sigma^I_n$ to $\N$-points of $S_n$, and if $c$ is coprime to $n$, gives a Type I solution (note that $abd$ is automatically coprime to $n$, thanks to \eqref{I-1}).  In the converse direction, all Type I solutions arise in this manner:

\begin{proposition}[Description of Type I solutions]\label{type-1}  Let $n \in \N$, and let $(x,y,z)$ be a Type I solution.  Then there exists a unique $(a,b,c,d,e,f) \in \N^6 \cap \Sigma^\I_n$ with $abcd$ coprime to $n$ and $a,b,c$ having no common factor, such that $\pi^\I_n(a,b,c,d,e,f) = (x,y,z)$.
\end{proposition}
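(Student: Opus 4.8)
# Proof Proposal for Proposition (Description of Type I solutions)

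\textbf{Overall strategy.} The plan is to start from a Type I solution $(x,y,z)$ of \eqref{xyz} — meaning $n \mid x$ but $(n,yz)=1$ — and construct the sextuple $(a,b,c,d,e,f)$ explicitly, in a canonical way, by peeling off common factors one prime at a time. I would write $x = n x'$, so that the equation $\frac{4}{n} = \frac{1}{nx'} + \frac1y + \frac1z$ rearranges to $\frac{4x'-1}{nx'} = \frac{y+z}{yz}$, i.e. $(4x'-1)yz = nx'(y+z)$. The coprimality $(n,yz)=1$ forces $x' \mid y+z$ (since $4x'-1$ is coprime to $x'$), and also forces appropriate divisibility relations among $y$ and $z$. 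The natural choice is to set $g := (y,z)$, write $y = g y_1$, $z = g z_1$ with $(y_1,z_1)=1$, and then distribute the remaining factors: roughly, $d$ will be $g$ up to a normalization, $a,b$ will be (essentially) $y_1, z_1$, $c$ will capture $y_1+z_1$ after removing a further common factor, $e$ will be $c e = a+b$, and $f = 4acd - n$ will be read off from \eqref{I-6}. The precise bookkeeping is dictated by the requirement that $abcd$ be coprime to $n$ and that $a,b,c$ share no common factor.

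\textbf{Key steps, in order.} (1) Reduce to the equation $(4x'-1)yz = nx'(y+z)$ and record the divisibility consequences of $(n,yz) = 1$. (2) Introduce $d$ as (a suitable normalization of) $(y,z)$ and set up $a, b$ so that $acd = x$, $bcd$, $abdn$ match the formula for $\pi^\I_n$; concretely, verify that one can solve for $a,b,c,d$ from $x,y,z$ via the inverse parameterisation $\Sigma^\I_n = \{(a,\frac{na+c}{4acd-n},c,d,\frac{4a^2d+1}{4acd-n},4acd-n)\}$ exhibited just before the proposition, checking that the resulting values are positive integers. (3) Define $e := \frac{4a^2d+1}{4acd-n}$ and $f := 4acd-n$, then verify all nine constraints \eqref{I-1}--\eqref{I-9} hold; by the redundancy remark it suffices to check three of them (say \eqref{I-3}, \eqref{I-6}, \eqref{I-7}) and derive the rest. (4) Check the normalization claims: $abcd$ coprime to $n$ (note $4abd = ne+1$ gives $(abd,n)=1$ automatically, so only $(c,n)=1$ needs the Type I hypothesis $(n,yz)=1$), and $\gcd(a,b,c)=1$ (this is where I absorb the "extra" common factor between $y_1+z_1$ and the rest into the right place). (5) Prove uniqueness: given two sextuples mapping to the same $(x,y,z)$ under $\pi^\I_n$ and satisfying both normalizations, use the dilation symmetry \eqref{dilation} — which is the only ambiguity in the parameterisation — together with $\gcd(a,b,c)=1$ to pin down $\lambda = 1$, hence equality.

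\textbf{Main obstacle.} The genuinely delicate point is step (4), specifically disentangling the common factors so that simultaneously $abcd \perp n$, $\gcd(a,b,c)=1$, and $e,f$ come out as \emph{integers}. The integrality of $e = (4a^2d+1)/(4acd-n)$ is equivalent to $4acd - n \mid 4a^2 d + 1$, i.e. $f \mid 4a^2 d + 1$ — this is not a tautology and must be extracted from the original equation via a resultant-type manipulation (multiply through and use $ce = a+b$, $bf = na+c$). I expect to show $f \cdot (\text{something}) = 4a^2d+1$ by eliminating variables from \eqref{I-2} and \eqref{I-8}. The risk is that a naive choice of how to split $(y,z)$ and $(y_1+z_1, \cdot)$ leaves a spurious common factor; the fix is to define $d$ and $c$ so that $c = (a+b)/e$ with $e$ chosen maximally, which I'd justify by a short gcd computation using the relation $\gcd(a+b, ab) \mid (a,b)^2$ type identities. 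Uniqueness in step (5) is comparatively routine once the construction is canonical, since any $\N$-point of $\Sigma^\I_n$ with $a,b,c$ coprime is a fixed point of no nontrivial dilation \eqref{dilation}.
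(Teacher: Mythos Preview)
Your overall strategy---peel off common factors, reconstruct $(a,b,c,d,e,f)$, verify the constraints, and deduce uniqueness from the dilation symmetry \eqref{dilation} together with $\gcd(a,b,c)=1$---is correct and matches the paper's. But your specific decomposition is organised differently and less cleanly, and this is what creates the ``main obstacle'' you worry about.

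The paper does not start by pulling out $g=(y,z)$. Instead it writes $x=ndx'$, $y=dy'$, $z=dz'$ with $d=\gcd(x/n,y,z)$, so that $x',y',z'$ are \emph{jointly} coprime. Multiplying \eqref{xyz} by $ndx'y'z'$ gives $4dx'y'z' = y'z' + nx'y' + nx'z'$; since $(n,y'z')=1$, one reads off the three symmetric divisibilities $x'\mid y'z'$, $y'\mid x'z'$, $z'\mid x'y'$. A short prime-by-prime argument (using $\min(v_p(x'),v_p(y'),v_p(z'))=0$) then forces $x'=ab$, $y'=ac$, $z'=bc$ with $\gcd(a,b,c)=1$. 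Substituting yields \eqref{I-3} directly. Once you have $4abcd = na+nb+c$ with $(c,n)=1$, the integrality of $e=(a+b)/c$ is immediate (reduce \eqref{I-3} mod $c$), and $f:=4acd-n$ is a positive integer because $y=acd>n/4$; the remaining identities \eqref{I-1}--\eqref{I-9} then follow by one-line algebra. So what you flag as the delicate point---showing $f\mid 4a^2d+1$---is a triviality in the paper's ordering.

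Your route, taking $g=(y,z)$ first, conflates $c$ and $d$ (and possibly $\gcd(a,b)$): in the target form $y=acd$, $z=bcd$ one has $(y,z)=cd\cdot(a,b)$, not $d$, so your identification of $d$ with $g$ ``up to normalisation'' and of $c$ with something built from $y_1+z_1$ is off and would require further untangling. Pulling out the triple gcd first avoids this entirely.
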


\begin{proof} The uniqueness follows since $\pi^\I_n$ is injective after quotienting out by dilations.  To show existence, we factor  $x=ndx', y = dy', z=dz'$, where $x',y',z'$ are coprime, then after multiplying \eqref{xyz} by $ndx'y'z'$ we have
\begin{equation}\label{dxy}
4dx'y'z' = y'z' + nx'y' + nx'z'.
\end{equation}
As $y',z'$ are coprime to $n$, we conclude that $x'$ divides $y'z'$, $y'$ divides $x'z'$, and $z'$ divides $x'y'$.  Splitting into prime factors, we conclude that
\begin{equation}\label{xabc}
 x' = ab, y' = ac, z' = bc
 \end{equation}
for some natural numbers $a,b,c$; since $x',y',z'$ have no common factor, $a,b,c$ have no common factor also.  As $y,z$ were coprime to $n$, $abcd$ is coprime to $n$ also.  

Substituting \eqref{xabc} into \eqref{dxy} we obtain \eqref{I-3}, which in particular implies (as $c$ is coprime to $n$) that $c$ divides $a+b$.  If we then set $e := (a+b)/c$ and $f := 4acd - n = (na+c)/b$, then $e,f$ are natural numbers, and we obtain the other identities \eqref{I-1}-\eqref{I-9} by routine algebra.  By construction we have $\pi^\I_n(a,b,c,d,e,f) = (x,y,z)$, and the claim follows.
\end{proof}

In particular, for fixed $n$, a Type I solution exists if and only if there is an $\N$-point $(a,b,c,d,e,f)$ of $\Sigma^\I_n$ with $abcd$ coprime to $n$ (the requirement that $a,b,c$ have no common factor can be removed using the symmetry \eqref{dilation}).  By parameterising $\Sigma^\I_n$ using three or four of the six coordinates, we recover some of the known characterisations of Type I solvability:

\begin{proposition}  Let $n$ be a natural number.  Then the following are equivalent:
\begin{itemize}
\item There exists a Type I solution $(x,y,z)$.
\item There exists $a,b,e \in \N$ with $e \mid a+b$ and $4ab \mid ne+1$. \cite{aigner} 
\item There exists $a,b,c,d \in \N$ such that $4abcd = na+nb+c$ with $c$ coprime to $n$. \cite{bernstein} 
\item There exist $a,c,d,e \in \N$ such that $ne+1=4ad (ce-a)$ with $c$ coprime to $n$. \cite{rosati, mordell} 
\item There exist $a,c,d,f \in \N$ such that $n=4acd-f$ and $f \mid 4a^2d+1$, with $c$ coprime to $n$. \cite{nakayama} 
\item There exist $b,c,d,e$ with $ne = (4bcde-1)- 4b^2 d$ and $c$ coprime to $n$. \cite{bello}  
\end{itemize}
\end{proposition}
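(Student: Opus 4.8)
The plan is to run everything through the variety $\Sigma^\I_n$ and its coordinate parametrisations. The pivot is Proposition~\ref{type-1}, which I would restate as: \emph{$n$ admits a Type I solution if and only if $\Sigma^\I_n$ has an $\N$-point $(a,b,c,d,e,f)$ with $c$ coprime to $n$.} One direction is Proposition~\ref{type-1} itself; conversely, given such an $\N$-point, \eqref{I-1} forces $abd$ (hence $abcd$, since $c$ is coprime to $n$) to be coprime to $n$, so $\pi^\I_n$ sends it to $(x,y,z)=(abdn,acd,bcd)$ with $n\mid x$ but $n$ coprime to $y$ and $z$ --- a genuine Type I solution. The ``$a,b,c$ have no common factor'' clause in Proposition~\ref{type-1} is needed only for uniqueness; for existence it may be dropped, or removed using the dilation symmetry \eqref{dilation}. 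Every remaining condition will then be recognised as the statement that a designated projection of $\Sigma^\I_n$ has an $\N$-point, with $c$ coprime to $n$ where stated.

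Concretely: the Bernstein condition is exactly \eqref{I-3}; the Rosati--Mordell condition is \eqref{I-1}--\eqref{I-2} after eliminating $b=ce-a$; the Nakayama condition is \eqref{I-6}--\eqref{I-7}, with $e$ read off from \eqref{I-7}; the Bello et al.\ condition is \eqref{I-5}, with $a=ce-b$ (and \eqref{I-5} already forces $ce>b$, so $a\in\N$); and the Aigner condition is \eqref{I-1}--\eqref{I-2}, giving $c=(a+b)/e$, $d=(ne+1)/(4ab)$, $f=4acd-n=(na+c)/b$. For each of the first four I would prove equivalence with Type I solvability in two moves: (i) a Type I solution gives, by Proposition~\ref{type-1}, an $\N$-point of $\Sigma^\I_n$ whose named coordinates satisfy the condition and whose $c$ is coprime to $n$; (ii) conversely, data satisfying the condition is extended to a full sextuple by the reconstruction formulas (as in the displayed $a,c,d$-parametrisation), one checks by elementary algebra that all of \eqref{I-1}--\eqref{I-9} hold and the reconstructed coordinates are positive integers, one checks $abcd$ is coprime to $n$ by reducing the defining relation mod $n$ (e.g.\ \eqref{I-3} gives $c(4abd-1)\equiv0\pmod n$, so $c$ coprime to $n$ makes $abd$ coprime to $n$), and then applies $\pi^\I_n$.

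The bookkeeping in step (ii) is where the bulk of the (routine) work lies: verifying integrality of each reconstructed coordinate --- often itself a consequence of the hypotheses, e.g.\ in the Nakayama slice $f\mid na+c$ because $na+c=c(4a^2d+1)-af$ --- and, more delicately, \emph{positivity}; the key inequality is $4acd>n$, which one pulls out of the defining equation (for instance $4abcd=na+nb+c>nb$ forces $4acd>n$). Since the four conditions above explicitly carry ``$c$ coprime to $n$'', the coprimality of $y=acd$ and $z=bcd$ to $n$ propagates without incident.

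The step I expect to be the genuine obstacle is the Aigner condition, because it carries \emph{no} coprimality hypothesis: already for $n=15$, $(a,b,e)=(1,2,1)$ (so $c=3$, $d=2$, $f=9$) one has $e\mid a+b$ and $4ab\mid ne+1$ while $c=(a+b)/e$ shares a factor with $n$, so $\pi^\I_n$ returns a solution of \eqref{xyz} that is not of Type I. To close the equivalence one must show that the mere existence of a triple $(a,b,e)$ already forces a Type I solution. Starting from $\frac4n=\frac1{abdn}+\frac1{acd}+\frac1{bcd}$, the last two terms sum to $\frac{a+b}{abcd}=\frac{e}{abd}$, where \eqref{I-1} forces $\gcd(e,abd)=1$ and $n$ odd; one then rewrites $\frac{e}{abd}$ as a sum of two unit fractions whose denominators are coprime to $n$. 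For $e\leq2$ this is immediate (replace $\frac1{acd}+\frac1{bcd}$ by $\frac1{2abd}+\frac1{2abd}$ or $\frac1{abd}+\frac1{abd}$), yielding a Type I solution at once; for general $e$ one must choose a complementary pair of divisors of $(abd)^2$ other than the ``obvious'' pair $\{a^2d,\,b^2d\}$ so as to dodge the primes dividing $n$, and arrange the congruence mod $e$ needed for the corresponding $y,z$ to be integral. Making this choice work is the one place where the argument needs more than elementary algebra; the converse --- a Type I solution yields an $\N$-point of $\Sigma^\I_n$, hence a triple $(a,b,e)$ with $e\mid a+b$ and $4ab\mid ne+1$ --- is immediate from Proposition~\ref{type-1} and \eqref{I-1}--\eqref{I-2}.
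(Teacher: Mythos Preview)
The paper omits the proof entirely, declaring it ``routine''. Your approach via $\Sigma^\I_n$ and Proposition~\ref{type-1} is precisely the machinery the paper builds for this purpose, and your case-by-case reconstructions (reading off the designated coordinates, rebuilding the remaining ones, and checking integrality and positivity) are the intended fill-in.

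You go further than the paper by flagging a genuine subtlety in the Aigner clause: as stated it carries no hypothesis forcing $c=(a+b)/e$ to be coprime to $n$, so the direct $\pi^\I_n$-image of the reconstructed sextuple need not be Type~I, and your $n=15$, $(a,b,e)=(1,2,1)$ example is correct. The paper does not address this. It may simply be a minor imprecision in the statement---compare the parallel Type~II proposition immediately following, whose Aigner clause \emph{does} carry an explicit coprimality hypothesis (``$(n+e)/4$ coprime to $n$''), consistent with the fact that Type~II only requires $abd$ coprime to $n$ (Proposition~\ref{type-2}), whereas Type~I requires $abcd$ coprime. Alternatively, the missing step may be implicit in Aigner's original paper. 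Your proposed salvage---reparametrising $e/(abd)=1/y+1/z$ by a different divisor pair $(u,v)$ of $(abd)^2$ with $u\equiv -abd\pmod e$---is a natural line and succeeds in your example (the choice $u=v=abd$ gives $(y,z)=(2abd/e,2abd/e)$ when $e\mid 2abd$, which handles your $e\leq 2$ remark), but whether a suitable $u$ always exists is not something the paper settles. One reassurance: the proof of Proposition~\ref{odd} in Section~\ref{odd-sec} uses only \eqref{I-1} and \eqref{I-2} and thus shows that odd perfect squares admit no $\N$-point in $\Sigma^\I_n$ whatsoever, so the most obvious source of counterexamples to the unadorned Aigner equivalence is already ruled out.
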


The proof of this proposition is routine and is omitted.

\begin{remark} Type I solutions $(x,y,z)$ have the obvious reflection symmetry
  $(x,y,z) \mapsto (x,z,y)$.  
With \eqref{I-6} and \eqref{I-9}
the corresponding symmetry for $\Sigma^\I_n$ is given by
$$(a,b,c,d,e,f) \mapsto \left(b,a,c,d,e,\frac{n^2+4c^2d}{f}\right).$$
We will typically only use the $\Sigma^\I_n$ parameterisation when $y \leq z$ (or equivalently when $a \leq b$), in order to keep the sizes of various parameters small.
\end{remark}

\begin{remark}  
If we consider $\N$-points $(a,b,c,d,e,f)$ of $\Sigma^\I_n$ with $a=1$, they can be explicitly parameterised as
$$ \left(1, ce-1, c, \frac{ef-1}{4}, e, f \right)$$
where $e,f$ are natural numbers with $ef=1 \mod 4$ and $n=cef-c-f$.  This shows that any $n$ of the form $cef-c-f$ with $ef=1\mod 4$ solves the Erd\H{o}s-Straus conjecture, an observation made in \cite{bello}.  However, this is a relatively small set of solutions (corresponding to roughly $\log^2 n$ solutions for a given $n$ on average, rather than $\log^3 n$), due to the restriction $a=1$.  Nevertheless, in \cite{bello} it was verified that all primes $p = 1 \mod 4$ with $p \leq 10^{14}$ were representable in this form. 
\end{remark}

Now we turn to Type II solutions.  Here, we replace $\Sigma^\I_n$ by the variety $\Sigma^\II_n$, as defined the set of all sextuples $(a,b,c,d,e,f) \in \C^6$ which are non-zero and obey the constraints
\begin{align}
4abd &= n+e \label{II-1}\\
ce &= a+b \label{II-2}\\
4abcd &= a + b + nc \label{II-3}\\
4acde &= n + 4a^2 d + e \label{II-4}\\
4bcde &= n + 4b^2 d + e\label{II-5}\\
4acd &= f+1 \label{II-6}\\
ef &= n + 4a^2 d \label{II-7}\\
bf &= nc + a \label{II-8}\\
4c^2 dn + 1 &= f(4bcd-1)\label{II-9}.
\end{align}  
This is a very similar variety to $\Sigma^\I_n$; indeed the non-isotropic dilation 
$$ (a,b,c,d,e,f) \mapsto (a, b, c/n^2, d n, n^2 e, f/n)$$
is a bijection from $\Sigma^\I_n$ to $\Sigma^\II_n$.  Thus, as with
$\Sigma^\I_n$, $\Sigma^\II_n$ is a three-dimensional algebraic variety in
$\C^6$ which can be parameterised by any three of the six coordinates in
$(a,b,c,d,e,f)$.  As before, many of the constraints can be viewed as redundant; for instance, \eqref{II-9} is a consequence of \eqref{II-3} and  \eqref{II-6}.   Note that $\Sigma^\II_n$ enjoys the same dilation symmetry
\eqref{dilation} as $\Sigma^\I_n$, and also has the reflection symmetry (using   \eqref{II-6} and  \eqref{II-9})
$$ (a,b,c,d,e,f) \mapsto \left(b,a,c,d,e,\frac{4c^2dn+1}{f}\right).$$
Analogously to $\pi^\I_n$, we have the map $\pi^\II_n: \Sigma^\II_n \to S_n$ given by
\begin{equation}\label{pi2-n}
 \pi^\II_n: (a,b,c,d,e,f) \mapsto (abd, acdn, bcdn)
\end{equation}
which is injective up to the dilation symmetry \eqref{dilation} and which, when $n$ is a natural number, maps $\N$-points of $\Sigma^\II_n$ to $\N$-points of $S_n$, and when $abd$ is coprime to $n$, gives Type II solutions.  (Note that this latter condition is automatic when $n$ is prime, since $x,y,z$ cannot all be divisible by $n$.)

We have an analogue of Proposition \ref{type-1}:

\begin{proposition}[Description of Type II solutions]\label{type-2}  Let $n \in \N$, and let $(x,y,z)$ be a Type II solution.  Then there exists a unique $(a,b,c,d,e,f) \in \N^6 \cap \Sigma^\II_n$ with $abd$ coprime to $n$ and $a,b,c$ having no common factor, such that $\pi^\I_n(a,b,c,d,e,f) = (x,y,z)$.
\end{proposition}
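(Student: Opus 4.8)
The plan is to mirror the proof of Proposition \ref{type-1} essentially line by line, with the roles of the coordinates adjusted to reflect that a Type II solution has $n \mid y,z$ and $\gcd(x,n)=1$ rather than $n \mid x$ and $\gcd(y,n)=\gcd(z,n)=1$. Uniqueness is immediate: the map $\pi^\II_n$ of \eqref{pi2-n} is injective after quotienting out by the dilation symmetry \eqref{dilation}, and among the dilates of any $\N$-point of $\Sigma^\II_n$ exactly one has $a,b,c$ with no common factor.

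For existence, given a Type II solution $(x,y,z)$ I would set $d := \gcd(x, y/n, z/n)$ and write $x = dx'$, $y = ndy'$, $z = ndz'$ with $\gcd(x',y',z')=1$; since $d \mid x$ we have $\gcd(d,n)=1$, and in particular $\gcd(x',n)=1$. Substituting into \eqref{xyz} and clearing denominators (multiplying through by $ndx'y'z'$) gives
\[ 4dx'y'z' = ny'z' + x'z' + x'y', \]
and reading this modulo $x'$, $y'$, $z'$ in turn (using $\gcd(x',n)=1$ for the first) yields $x' \mid y'z'$, $y' \mid x'z'$ and $z' \mid x'y'$.

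The one step with genuine content is then the same $p$-adic valuation argument as in Proposition \ref{type-1}: combined with $\gcd(x',y',z')=1$, these divisibilities force $x' = ab$, $y' = ac$, $z' = bc$ for pairwise-coprime natural numbers $a,b,c$ (in particular with no common factor), and then $abd$ is coprime to $n$ because $ab = x' \mid x$ and $d \mid x$. Substituting back into the displayed identity and dividing by $abc$ recovers \eqref{II-3}, $4abcd = a+b+nc$, whence $c \mid a+b$; I would then set $e := (a+b)/c$ and $f := 4acd - 1$, both in $\N$, and verify the remaining identities \eqref{II-1}, \eqref{II-2}, \eqref{II-4}--\eqref{II-9} by routine algebra from \eqref{II-3} and the definitions of $e,f$ (for instance \eqref{II-7}, and hence \eqref{II-4}, follows from $ef = 4ad\cdot ce - e = 4ad(a+b) - e = n + 4a^2d$, using $n = 4abd - e$). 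By construction $\pi^\II_n(a,b,c,d,e,f) = (abd, acdn, bcdn) = (x,y,z)$, which completes the proof.

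I do not expect a real obstacle: the only nontrivial ingredient is the prime-splitting lemma, which is identical to the Type I case. It is worth noting, however, that one cannot shortcut the proof by appealing to the non-isotropic dilation $\Sigma^\I_n \to \Sigma^\II_n$ recorded just above the proposition, since that map divides the $c$-coordinate by $n^2$ and so does not send $\N$-points to $\N$-points; the factorization of $(x,y,z)$ and the integrality of $e$ and $f$ must be re-derived directly. The only substantive change from Proposition \ref{type-1} is that the coprimality now available is $\gcd(x,n)=1$ in place of $\gcd(y,n)=\gcd(z,n)=1$, which is exactly what is needed to push the same argument through.
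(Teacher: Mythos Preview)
Your proposal is correct and follows essentially the same route as the paper's proof: extract the common factor $d$, clear denominators to reach $4dx'y'z' = ny'z' + x'y' + x'z'$, use the divisibility and coprimality conditions to obtain the factorisation $x'=ab$, $y'=ac$, $z'=bc$, then define $e=(a+b)/c$ and $f=4acd-1$ and verify the $\Sigma^\II_n$ identities. Your remark that the non-isotropic dilation $\Sigma^\I_n\to\Sigma^\II_n$ cannot be used as a shortcut (because it does not preserve $\N$-points) is a nice observation not made explicit in the paper.
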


\begin{proof} Uniqueness follows from injectivity modulo dilations of $\pi^\II_n$ as before.  To show existence, we factor  $x=dx', y = ndy', z=ndz'$, where $x',y',z'$ are coprime, then after multiplying \eqref{xyz} by $ndx'y'z'$ we have
\begin{equation}\label{dxy-2}
4dx'y'z' = ny'z' + x'y' + x'z'.
\end{equation}
As $x'$ are coprime to $n$, we conclude that $x'$ divides $y'z'$, $y'$ divides $x'z'$, and $z'$ divides $x'y'$.  Splitting into prime factors, we again obtain the representation \eqref{xabc} for some natural numbers $a,b,c$; since $x',y',z'$ have no common factor, $a,b,c$ have no common factor also.  As $x$ was coprime to $n$, $abd$ is coprime to $n$ also.  

Substituting \eqref{xabc} into \eqref{dxy-2} we obtain \eqref{II-3}, which in particular implies that $c$ divides $a+b$.  If we then set $e := (a+b)/c$ and $f := 4acd - 1$, then $e,f$ are natural numbers, and we obtain the other identities \eqref{II-1}-\eqref{II-9} by routine algebra.  By construction we have $\pi^\II_n(a,b,c,d,e,f) = (x,y,z)$, and the claim follows.
\end{proof}

Again, we can recover some known characterisations of Type II solvability:

\begin{proposition}  Let $n$ be a natural number.  Then the following are equivalent:
\begin{itemize}
\item There exists a Type II solution $(x,y,z)$.
\item There exists $a,b,e \in \N$ with $e \mid a+b$ and $4ab \mid n+e$, and $(n+e)/4$ coprime to $n$. \cite{aigner} 
\item There exists $a,b,c,d \in \N$ such that $4abcd = a+b+nc$ with $abd$ coprime to $n$. \cite{bernstein, mordell} 
\item There exists $a,b,d \in \N$ with $4abd-1 \mid b+nc$ with $abd$ coprime to $n$. \cite{vaughan}  
\item There exist $a,c,d,e \in \N$ such that $n = (4acd-1)e - 4a^2 d$ with $(n+e)/4$ coprime to $n$. \cite{rosati} 
\item There exist $a,c,d,f \in \N$ such that $n=4ad(ce-a)-e=e(4acd-1)-4a^2d$ with $ad(ce-a)$ coprime to $n$. \cite{nakayama} 
\end{itemize}
\end{proposition}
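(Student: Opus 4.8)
The plan is to verify each of the six listed conditions is equivalent to the existence of a Type II solution by routing everything through Proposition \ref{type-2} and the explicit parameterisations of the three-dimensional variety $\Sigma^\II_n$. By Proposition \ref{type-2}, a Type II solution exists if and only if there is an $\N$-point $(a,b,c,d,e,f)$ of $\Sigma^\II_n$ with $abd$ coprime to $n$; using the dilation symmetry \eqref{dilation} one may drop the requirement that $a,b,c$ be coprime. So the task reduces to showing that each of the six bulleted conditions is a faithful repackaging of ``there is an $\N$-point of $\Sigma^\II_n$ with $abd$ coprime to $n$,'' obtained by selecting an appropriate subset of three or four of the coordinates $a,b,c,d,e,f$ and reading off the defining equations \eqref{II-1}--\eqref{II-9} in those coordinates.

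Concretely, I would proceed condition by condition. For the second item (Aigner's), parameterise by $a,b,e$: from \eqref{II-1} one needs $4ab \mid n+e$ and then $d = (n+e)/(4ab)$; from \eqref{II-2} one needs $e \mid a+b$ and then $c = (a+b)/e$; the coprimality $abd$ coprime to $n$ becomes, after noting $4abd = n+e$, the condition that $(n+e)/4$ is coprime to $n$ (here one checks $4 \mid n+e$ follows automatically, or absorbs it into the hypothesis as written). For the third item (Bernstein/Mordell), one simply restates \eqref{II-3}, namely $4abcd = a+b+nc$, with the coprimality condition carried along; conversely given such $a,b,c,d$ one recovers $e,f$ via $e=(a+b)/c$ (forced by \eqref{II-3} since then $c \mid a+b$) and $f = 4acd-1$, exactly as in the proof of Proposition \ref{type-2}. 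For the fourth item (Vaughan), use \eqref{II-8}: $bf = nc+a$ with $f = 4acd-1$ from \eqref{II-6}, giving $4abd-1 \mid b+nc$ — I would double-check whether the stated divisibility should read $4abd - 1 \mid a + nc$ or involves $c$ as a free parameter, and reconcile with \eqref{II-8} accordingly. For the fifth item (Rosati), eliminate $f$ between \eqref{II-6} and \eqref{II-7} to get $n = e(4acd-1) - 4a^2d$, and translate the coprimality via $4abd = n+e$ again. For the sixth item (Nakayama), combine \eqref{II-2} (so $b = ce-a$) with the Rosati form to get $n = 4ad(ce-a) - e = e(4acd-1) - 4a^2d$, and translate $abd$ coprime to $n$ into $ad(ce-a)$ coprime to $n$. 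In each direction the passage uses only the redundancy among \eqref{II-1}--\eqref{II-9} and the elementary divisibility deductions already made in the proof of Proposition \ref{type-2}.

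The routine but slightly delicate part is bookkeeping the coprimality and integrality side-conditions: one must check in each parameterisation that the remaining three coordinates are forced to be positive \emph{integers} (not merely rationals) precisely when the stated divisibility hypotheses hold, and that the various phrasings of ``$abd$ coprime to $n$'' (e.g.\ ``$(n+e)/4$ coprime to $n$'', ``$ad(ce-a)$ coprime to $n$'') are genuinely equivalent given the relevant defining equation. The main obstacle, such as it is, is not mathematical depth but ensuring consistency of these ancillary conditions across all six items and confirming that no spurious solutions (with $4acd = n$ in the Type I analogue, or the corresponding degenerate locus here) are introduced or lost; since $\Sigma^\II_n$ is genuinely three-dimensional and each listed condition fixes the right number of free parameters, no such degeneracy arises. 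Given all this, the proposition follows, and as the paper states, the proof is routine and may be omitted; I would include at most a one-line indication of the coordinate choice used for each item.
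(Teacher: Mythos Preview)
Your proposal is correct and matches the paper's approach exactly: the paper omits the proof as routine, and the intended argument is precisely the one you describe, namely reading off each condition from Proposition~\ref{type-2} and the relations \eqref{II-1}--\eqref{II-9} by choosing three or four of the six coordinates of $\Sigma^\II_n$ as the free parameters and checking the integrality and coprimality bookkeeping. Your caution about the fourth item is well-placed (the statement as printed has a minor inconsistency in the variables), but this does not affect the method.
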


Next, we record some bounds on the order of magnitude of the parameters $a,b,c,d,e,f$ assuming that $y \leq z$.  

\begin{lemma}\label{size}  Let $n \in \N$, and suppose that $(x,y,z) = \pi^\I_n(a,b,c,d,e,f)$ is a Type I solution such that $y \leq z$.  Then
\begin{align*}
a &\leq b \\
\frac{1}{4} n < acd &\leq \frac{3}{4} n \\
b < ce &\leq 2b\\
an \leq bf &\leq \frac{5}{3} an.
\end{align*}
If instead $(x,y,z) = \pi^\II_n(a,b,c,d,e,f)$ is a Type II solution such that $y \leq z$, then
\begin{align*}
a &\leq b \\
\frac{1}{4} n < acde &\leq n \\
b < ce &\leq 2b\\
3acd \leq f &< 4acd
\end{align*}
\end{lemma}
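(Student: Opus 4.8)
\textbf{Proof proposal for Lemma \ref{size}.}

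The plan is to extract all of these inequalities from the defining identities \eqref{I-1}--\eqref{I-9} (resp.\ \eqref{II-1}--\eqref{II-9}) together with the single hypothesis $y \leq z$, using the explicit form of the map $\pi^\I_n$ (resp.\ $\pi^\II_n$) and the trivial observation that each of $x,y,z$ is a positive integer satisfying $1/x, 1/y, 1/z < 4/n$, hence $x,y,z > n/4$; and that since $1/y + 1/z \geq 2/x$ is false in general but $4/n = 1/x+1/y+1/z$ forces $x \leq 3n/4$ after noting $x$ is the largest of the three (once we know which is largest). Let me organize it for Type I first. Recall $\pi^\I_n(a,b,c,d,e,f) = (abdn, acd, bcd)$, so $y = acd$ and $z = bcd$; thus $y \leq z$ is literally $acd \leq bcd$, i.e.\ $a \leq b$, which is the first claim. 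For the second claim, from $4/n = 1/(abdn) + 1/(acd) + 1/(bcd)$ we get, multiplying through, the relation $4abcd = na+nb+c$ which is \eqref{I-3}; since $a \leq b$ and all quantities are positive, $4abcd = na + nb + c > nb \geq na$, and also $4abcd = na+nb+c \leq 2nb + c$. Dividing the first by $4b$ gives $acd > n/4$; for the upper bound $acd \leq 3n/4$ I would instead argue that $y = acd$ is the \emph{middle} term, so $1/y \leq 4/(3n)$ is not immediate—rather, since $x = abdn \geq an \cdot bd \geq$ (something large) we have $1/x$ small, and $1/y + 1/z \leq 4/n$ forces... actually the cleanest route is: $acd \leq bcd$ and $acd + bcd < 4abcd/n \cdot$—let me instead use $c \leq 4abcd$ trivially and $na + nb < 4abcd$, so $a + b < 4abcd/n$, combined with $4/n = 1/(abdn) + 1/(acd) + 1/(bcd) \geq 1/(acd) + 1/(bcd) \geq 2/(bcd)$ hence $bcd \geq n/2$, and symmetrically using $acd \leq bcd$ that $4/n \leq 3/(acd)$ is false; the right statement is that dropping the $1/x$ term, $1/(acd) + 1/(bcd) \leq 4/n$, and since $acd \leq bcd$, $2/(bcd) \leq 4/n$ giving nothing for $acd$. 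So for $acd \leq 3n/4$: note $1/(acd) = 4/n - 1/(bcd) - 1/(abdn) \geq 4/n - 2/(bcd)$—still need a lower bound on $bcd$ which we have ($bcd \geq acd > n/4$ would circularly need $acd$)... I would instead directly observe $1/(acd) \leq 4/n$ gives $acd \geq n/4$ (already have), and for the upper bound use that $1/(abdn) \geq$ is tiny so $1/(acd) + 1/(bcd) \geq 4/n - 1/(acd)$ is wrong direction; the honest argument is $1/(acd) \leq 4/n - 1/(bcd) - 1/(abdn) $, no—the correct bound $acd \leq 3n/4$ comes from the fact that $x = abdn > acd = y$ cannot be (since $abdn$ vs $acd$: we need $bn > c$, which follows from \eqref{I-8}: $bf = na + c$, and $f = 4acd - n \geq 1$ so... hmm) — in any case, the ratio bounds I expect to follow by a short chain of these elementary manipulations, picking the identity among \eqref{I-1}--\eqref{I-9} that most directly relates the relevant pair; in particular $b < ce \leq 2b$ comes immediately from \eqref{I-2} ($ce = a+b$) with $0 < a \leq b$, and $an \leq bf \leq \frac{5}{3}an$ comes from \eqref{I-8} ($bf = na + c$) together with the bound $c \leq \frac{2}{3}an$ that I would derive from $c = 4abcd - na - nb = b(4acd - n) - na$ and $acd \leq \frac34 n$, so $4acd - n \leq 2n$, giving $c = 4acd\cdot b/(bcd)\cdot$—more simply, $c < 4acd \cdot (b/(cd)) $... the point is $c$ is controlled by $an$ via $c \leq 2abd \cdot$, and I will chase the precise constant $\frac53$ out of $acd \leq \frac34 n$.

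For Type II the structure is identical: $\pi^\II_n(a,b,c,d,e,f) = (abd, acdn, bcdn)$, so $y = acdn$, $z = bcdn$, and $y \leq z \iff a \leq b$. The governing identity is \eqref{II-3}, $4abcd = a + b + nc$. From $4/n = 1/(abd) + 1/(acdn) + 1/(bcdn)$, the term $1/(abd)$ need not be small (here $x = abd$ is the one \emph{not} divisible by $n$ and is typically the largest), so I would argue: $1/(acdn) + 1/(bcdn) < 4/n$ gives $1/(acde) \cdot$—rather, $1/(acdn) \leq 4/n$ so $acd \geq 1/4$, i.e.\ $acde \geq e/4$; the stated bound is $\frac14 n < acde \leq n$, so I need to bring in $e$. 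Using \eqref{II-2} $ce = a+b$ and \eqref{II-4} $4acde = n + 4a^2d + e$: from \eqref{II-4}, $4acde = n + 4a^2d + e > n$, hence $acde > n/4$, the lower bound. For the upper bound $acde \leq n$: from \eqref{II-4}, $4acde = n + 4a^2 d + e$, and I must show $4a^2 d + e \leq 3n$, i.e.\ $4a^2d + e \leq 3n$; since $a \leq b$, \eqref{II-1} gives $4abd = n+e > 4a^2 d$, so $4a^2 d < n + e$, thus $4a^2 d + e < n + 2e$, and it remains to show $2e \leq 2n$, i.e.\ $e \leq n$—but $e = (a+b)/c \leq a + b$ and from \eqref{II-3}, $a + b = 4abcd - nc = c(4abd - n) = ce$... circular; instead $e \leq n+e = 4abd$, and ... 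I expect $e \leq n$ to follow because $4acde = n + 4a^2d + e$ with $acd \geq 1$ forces $e(4acd - 1) = n + 4a^2 d$, so $3e \leq e(4acd-1) = n + 4a^2d \leq n + 4abd = 2n + e$ using $a^2 d \leq abd = (n+e)/4$, hence $2e \leq 2n$, $e \leq n$; feeding back, $4acde = n + 4a^2d + e \leq n + (n+e) + e = 2n + 2e \leq 4n$, so $acde \leq n$. Then $b < ce \leq 2b$ is again immediate from \eqref{II-2}, and $3acd \leq f < 4acd$ is immediate from \eqref{II-6}: $f = 4acd - 1$, and since $acd \geq 1$ we get $f = 4acd - 1 \geq 3acd \iff acd \geq 1$, which holds, while $f = 4acd - 1 < 4acd$ is trivial.

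In short, every inequality reduces to selecting the right member of the list \eqref{I-1}--\eqref{I-9} or \eqref{II-1}--\eqref{II-9}, substituting $a \leq b$, and using positivity plus the identity $4/n = 1/x + 1/y + 1/z$ one or two times. The \textbf{main obstacle} is purely bookkeeping: getting the sharp constants ($\frac34$, $\frac53$ in the Type I case; the clean bound $acde \leq n$ in the Type II case) rather than mere $O(n)$ estimates. This requires being careful to bootstrap — first establish the crude two-sided bound, then feed it back through one of the identities (e.g.\ \eqref{I-3} or \eqref{II-4}) to sharpen the constant — and to consistently exploit $a \leq b$ together with the smallness of the $1/x$ term (for Type I) or the relation $4abd = n+e$ (for Type II). I would write it out as two short paragraphs, one per type, displaying only the chain of inequalities and omitting the elementary algebra verifying that the parameters defined in Propositions \ref{type-1} and \ref{type-2} indeed satisfy all nine identities, which is already granted there.
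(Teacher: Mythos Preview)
Your Type II argument is correct and complete: using \eqref{II-4} in the form $e(4acd-1)=n+4a^2d$, together with $4a^2d\leq 4abd=n+e$ from $a\leq b$ and \eqref{II-1}, you get $3e\leq 2n+e$ hence $e\leq n$, and then $4acde=n+4a^2d+e\leq 2n+2e\leq 4n$. This is a slightly different (and perfectly valid) route from the paper, which instead reads off $n/4<abd\leq n/2$ directly from \eqref{II-3} in the form $nc<4abcd\leq nc+2abcd$ and then multiplies by $ce/b\in(1,2]$; either works.

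Your Type I argument, however, has a genuine circularity. You propose to obtain $bf\leq\frac53 an$ from \eqref{I-8} and the bound $c\leq\frac23 an$, and to obtain $c\leq\frac23 an$ from $acd\leq\frac34 n$; but you never independently establish $acd\leq\frac34 n$. All of your attempts at that upper bound either go in circles or rely on knowing that $y$ is the \emph{smallest} of $x,y,z$, which you have not shown (you only know $y\leq z$; you must rule out $x\leq y$). The missing idea is to use \eqref{I-7}: since $ef=4a^2d+1\equiv1\pmod 4$, and $e=f=1$ is impossible (it would force $b=na+c=na+a+b$ via \eqref{I-2} and \eqref{I-8}), one has $ef\geq 5$. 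Then from $bf=na+c$ and $c\leq ce=a+b\leq 2b$, write $bf\leq na+2b/e=na+\tfrac{2}{ef}bf$, giving $bf\leq\frac{5}{3}an$. This in turn yields $c=bf-na\leq\frac23 an$, hence $bcd<abdn$, i.e.\ $z<x$; now $y$ is genuinely the smallest and $4/n\leq 3/y$ gives $acd\leq\frac34 n$. So the chain runs $ef\geq5\Rightarrow bf\leq\frac53 an\Rightarrow c\leq\frac23 an\Rightarrow z<x\Rightarrow acd\leq\frac34 n$, the reverse of what you sketched.
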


Informally, the above lemma asserts that the magnitudes of the quantities $(a,b,c,d,e,f)$ are controlled entirely by the parameters $(a,c,d,f)$ (in the Type I case) and $(a,c,d,e)$ (in the Type II case), with the bounds $acd \sim n, f \ll n$ in the Type I case and $acde \sim n$ in the Type II case. The constants in the bounds here could be improved slightly, but such improvements will not be of importance in our applications.

\begin{proof} First suppose we have a Type I solution.  As $y \leq z$, we have $a \leq b$.  From \eqref{I-2} we then have $b < ce \leq 2b$, and thus from \eqref{I-8} we have
$$ an \leq bf \leq an + \frac{2}{ef} bf.$$
Now, from \eqref{I-7}, $ef = 1 \mod 4$.  If $e=f=1$, then from \eqref{I-2} and
\eqref{I-8} we would have $b = na + c = na + a + b$, which is absurd, thus $ef
\geq 5$.  This gives $bf \leq 5 an/3$ as claimed.  From \eqref{I-8}
this implies that 
$c \leq 2an/3$, which in particular implies that $bcd< abdn$ and
so $y \leq z < x$.  From \eqref{xyz} we conclude that
$$ \frac{4}{3n} \leq \frac{1}{y} < \frac{4}{n}$$
which gives the bound $n/4 < acd \leq 3n/4$ as claimed.

Now suppose we have a Type II solution.  Again $a \leq b$ and $b < ce \leq 2b$.  From \eqref{II-3} we have
$$ nc < 4abcd \leq nc + 2abcd$$
and thus $n/4 < abd \leq n/2$, which by the $ce$ bound gives
$n/4 < acde \leq n$.  Since $f = 4acd-1$, we have $3acd \leq f < 4acd$, and the claim follows.
\end{proof}

\begin{remark} From the above bounds one can also easily deduce the following observation: if $4/p = 1/x + 1/y + 1/z$, then the largest denominator $\max(x,y,z)$ is always divisible by $p$.  (This observation also appears in \cite{els}.)
\end{remark}

\begin{remark}\label{heath-remark} Propositions \ref{type-1}, \ref{type-2} can be viewed as special cases of the classification by Heath-Brown \cite{heath} of primitive integer points $(x_1,x_2,x_3,x_4) \in (\Z \backslash \{0\})^4$ on Cayley's surface
$$ \left\{ (x_1,x_2,x_3,x_4): \frac{1}{x_1} + \frac{1}{x_2} + \frac{1}{x_3} + \frac{1}{x_4} = 0 \right\},$$
where by ``primitive'' we mean that $x_1,x_2,x_3,x_4$ have no common factor.  Note that if $n,x,y,z$ solve \eqref{xyz}, then $(-n, 4x, 4y, 4z)$ is an integer point on this surface, which will be primitive when $n$ is prime.  In \cite[Lemma 1]{heath} it is shown that such integer points $(x_1,x_2,x_3,x_4)$ take the form
$$ x_i = \epsilon y_j y_k y_l z_{ij} z_{ik} z_{il}$$
for $\{i,j,k,l\}=\{1,2,3,4\}$, where $\epsilon \in \{-1,+1\}$ is a sign, and the $y_i, z_{ij}$ are non-zero integers obeying the coprimality constraints
$$ (y_i,y_j) = (z_{ij},z_{kl}) = (y_i,z_{ij}) = 1$$
for $\{i,j,k,l\}=\{1,2,3,4\}$, and obeying the equation
\begin{equation}\label{da}
 \sum_{\{i,j,k,l\}=\{1,2,3,4\}} y_i z_{jk} z_{kl} z_{lj} = 0.
\end{equation}
Conversely, any $\epsilon, y_i, z_{ij}$ obeying the above conditions induces a primitive integer point on Cayley's surface.  The Type I (resp. Type II) solutions correspond, roughly speaking, to the cases when one of the $z_{1i}$ (resp. one of the $y_i$) in the factorisation
$$ n = x_1 = \epsilon y_2 y_3 y_4 z_{12} z_{13} z_{14}$$
are equal to $\pm n$.  The $y_i, z_{ij}$ coordinates are closely related to the $(a,b,c,d,e,f)$ coordinates used in this section; in \cite{heath} it is observed that these coordinates obey a number of algebraic equations in addition to \eqref{da}, which essentially describe (the closure of) the universal torsor \cite{cts} of Cayley's surface.

In \cite{heath} it was shown that the number of integer points
$(x_1,x_2,x_3,x_4)$ on Cayley's surface of maximal height
$\max(|x_1|,\ldots,|x_4|)$ bounded by $N$ was comparable to $N \log^6 N$.  This
is not quite the situation considered in our paper; a solution to \eqref{xyz}
with $n \leq N$ induces an integer point $(x_1,x_2,x_3,x_4)$ whose
\emph{minimal} height $\min(|x_1|,\ldots,|x_4|)$ is bounded by $N$.
Nevertheless, the results in \cite{heath} can be easily modified (by minor
adjustments to account for the restriction that three of the $x_i$ are
positive, and restricting $n$ to be a multiple of $4$ to eliminate divisibility
constraints) to give a \emph{lower bound} $\sum_{n \leq N} f(n) \gg N \log^6 N$
for the number of such points, though it is not immediately obvious whether
this lower bound can be matched by a corresponding upper bound. Nevertheless,
we see that there are several logarithmic factors separating the general
solution count from the Type I and Type II solution count; in particular, for
generic $n$, the majority of solutions to \eqref{xyz} will neither be Type I
nor Type II.
In spite of this, the number of Type I and Type II solutions is the relevant
quantity for studying the Erd\H{o}s-Straus conjecture, as it is naturally
to study it for prime denominators only. 
\end{remark}

We close this section with a small remark on the
well known standard classification of solutions in Mordell's book:
His two cases (in his notation)
$$\frac{m}{p} = \frac{1}{abd} + \frac{1}{acd} + \frac{1}{bcdp}$$
with $(a,b)=(a,c)=(b,c)=1$ and $p\nmid abcd$ and
$$\frac{m}{p} = \frac{1}{abd} + \frac{1}{acdp} + \frac{1}{bcdp}$$
$(a,b)=(a,c)=(b,c)=1$ with $p\nmid abd$ suggest that $p\mid c$ might 
be possible. Here we prove, for $m>3$ and $p$ coprime to $m$, 
that none of the denominators can be divisible by $p^2$.
In particular $p \nmid abcd$ in both of the cases above.  

\begin{proposition}{\label{refinement}}
Let $m/p = 1/x + 1/y +1/z$ where $m > 3$, 
$p$ is a prime not dividing $m$, and $x,y,z$ are natural numbers.  
Then none of $x,y,z$ are divisible by $p^2$.
\end{proposition}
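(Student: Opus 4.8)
The plan is to first relabel the variables and dispatch the smallest denominator by a size estimate, then treat the remaining two denominators by combining $p$-adic valuations with a second size estimate. Since the conclusion concerns only the unordered triple $\{x,y,z\}$, I would begin by assuming $x \le y \le z$. Adding the three terms gives $3/x \ge m/p$, hence $x \le 3p/m$; since $m$ is an integer with $m > 3$ we have $m \ge 4$, so $x \le 3p/4 < p$, and as $x$ is a positive integer this forces $p \nmid x$. In particular $p^2 \nmid x$, so it remains only to show $p^2 \nmid y$ and $p^2 \nmid z$.

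Next I would isolate the other two terms: $\frac{1}{y} + \frac{1}{z} = \frac{mx-p}{px} =: \frac{m'}{px}$, where $m' = mx - p$ is a positive integer (as $1/y+1/z>0$ forces $mx>p$), satisfies $m' \le 2p$ (from $x \le 3p/m$), and is coprime to $p$ (since $m' \equiv mx \pmod{p}$ with $p \nmid m$ and $p \nmid x$). Writing $v_p$ for the $p$-adic valuation, it follows that $v_p(1/y + 1/z) = -1$. Now suppose for contradiction that $p^2$ divides at least one of $y,z$. If it divided exactly one of them, then from $1/y+1/z = (y+z)/(yz)$ and the fact that $v_p(y+z) = \min(v_p(y),v_p(z))$ when these two valuations are distinct, one gets $v_p(1/y+1/z) = -\max(v_p(y),v_p(z)) \le -2$, a contradiction. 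Hence $p^2 \mid y$ and $p^2 \mid z$; put $\beta := v_p(y) \ge 2$ and $\gamma := v_p(z) \ge 2$. From $v_p(1/y+1/z)=-1$ one gets $v_p(y+z) = \beta+\gamma-1$; but $v_p(y+z) \ge \min(\beta,\gamma)$, with equality when $\beta \ne \gamma$, and checking the cases $\beta < \gamma$ and $\gamma < \beta$ (each of which would force one of $\beta,\gamma$ to equal $1$) shows that $\beta = \gamma$.

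Finally I would bring in the size constraint once more. Since $y \le z$ we have $m'/(px) = 1/y+1/z \le 2/y$, so $y \le 2px/m'$; but $p^2 \le p^{\beta} \le y$, hence $m' p^2 \le 2px$, i.e. $m' p \le 2x \le 6p/m \le 3p/2$, forcing $m' = 1$. Then $mx = p+1$, while $p \le 2x$ (from $m'p \le 2x$) gives $x \ge p/2$ and so $mx \ge mp/2 \ge 2p$; combining, $2p \le p+1$, which is impossible for a prime $p$. This contradiction proves the proposition.

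The step I expect to be the crux --- and the place where the hypothesis $m>3$ is used a second, essential time --- is this last size estimate in the case $v_p(y)=v_p(z) \ge 2$: the $p$-adic information alone is consistent with such a configuration (indeed it must be, since the statement fails for $m \le 3$), so one genuinely needs that $1/y+1/z$ is not too small, which is precisely what $x \le 3p/m$ (hence $m \ge 4$) provides.
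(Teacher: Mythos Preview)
Your proof is correct. The paper's own argument proceeds differently: it invokes the Type I/Type II parameterisation already set up (Propositions \ref{type-1} and \ref{type-2}), so that in the Type I case $(x,y,z)=(abdp,acd,bcd)$ the coprimality of $abcd$ with $p$ settles the matter immediately, and in the Type II case $(x,y,z)=(abd,acdp,bcdp)$ with $abd$ coprime to $p$ the only question is whether $p\mid c$; this is then ruled out by a size estimate (bounding $mx$ strictly between $p$ and $p+4/m$, hence non-integral for $m\ge 4$). Your argument is more elementary and self-contained: you never appeal to the structural classification, instead dispatching the smallest denominator by the trivial bound $x\le 3p/m<p$, and then using a $p$-adic valuation argument on $1/y+1/z$ to reduce to the case $v_p(y)=v_p(z)\ge 2$ before bringing in the size constraint. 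The paper's route is shorter given the machinery already in place; yours has the advantage of standing alone without Section \ref{represent-sec}, and makes the role of the hypothesis $m>3$ in the final size estimate quite transparent.
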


Note that there are a small number of counterexamples to this proposition for $m \leq 3$, such as ${3}/{2} = {1}/{1} + {1}/{4} + {1}/{4}$.

\begin{proof}  We may assume that $(x,y,z)$ is either a Type I or Type II solution (replacing $4$ by $m$ as needed).  In the Type I case $(x,y,z) = (abdp, acd, bcd)$, the claim is already clear since $abcd$ is known to be coprime to $p$.  In the Type II case $(x,y,z) = (abd, acdp, bcdp)$ it is known that $abd$ is coprime to $p$, so the only remaining task is to establish that $c$ is coprime to $p$ also.

Suppose $c$ is not coprime to $p$; then $y, z$ are both divisible by $p^2$.  In particular
$$ \frac{1}{y}+\frac{1}{z} \leq \frac{2}{p^2}$$
and hence
$$ \frac{m}{p} > \frac{1}{x} \geq \frac{m}{p} - \frac{2}{p^2}.$$
Taking reciprocals, we conclude that
$$ p < mx \leq p (1 - \frac{2}{mp})^{-1}.$$
Bounding $(1-\eps)^{-1} < 1+2\eps$ when $0 < \eps < 1/2$, we conclude that
$$ p < mx < p + \frac{4}{m}.$$
But if $m>3$, this forces $mx$ to be a non-integer, a contradiction.
\end{proof}

\section{Upper bounds for $f_i(n)$}\label{upper-sec}

We may now prove Proposition \ref{upper}.

We begin with the bound for $f_\I(n)$.  By symmetry we may restrict attention to Type I solutions $(x,y,z)$ for which $y \leq z$.  By Proposition \ref{type-1} and Lemma \ref{size}, these solutions arise from sextuples $(a,b,c,d,e,f) \in \N^6 \cap \Sigma^\I_n$ obeying the Type I bounds in Lemma \ref{size}.  In particular we see that
$$ e \cdot f \cdot (cd)^2 \cdot ac = (acd)^2 (\frac{ce}{b}) (\frac{bf}{a}) \ll n^3,$$
and hence at least one of $e, f, cd, ac$ is $O(n^{3/5})$.

Suppose first that $e \ll n^{3/5}$.   For fixed $e$, we see from \eqref{I-1} and the divisor bound \eqref{tau-divisor} that there are $n^{O({1}/{\log\log n})}$ choices for $a,b,d$, giving a net total of $n^{3/5+O({1}/{\log\log n})}$ points in $\Sigma^\I_n$ in this case.

Similarly, if $f \ll n^{3/5}$, \eqref{I-6} and the divisor bound gives $n^{O({1}/{\log\log n})}$ choices for $a,c,d$ for each $f$, giving $n^{3/5+O({1}/{\log\log n})}$ solutions.  If $cd \ll n^{3/5}$, one uses \eqref{I-9} and the divisor bound to get $n^{O({1}/{\log\log n})}$ choices for $b,f,c,d$ for each choice of $cd$, and if $ac \ll n^{3/5}$, then \eqref{I-8} and the divisor bound gives $n^{O({1}/{\log\log n})}$ choices for $a,b,c,f$ for each fixed $ac$.  Putting all this together (and recalling that any three coordinates in $\Sigma^\I_n$ determine the other three) we obtain the first part of Proposition \ref{upper}.

Now we prove the bound for $f_\II(n)$, which is similar.  Again we may restrict attention to sextuples $(a,b,c,d,e,f) \in \N^6 \cap \Sigma^\II_n$ obeying the Type II bounds in Lemma \ref{size}.  In particular we have
$$ e^2 \cdot (ad) \cdot (ac) \cdot (cd) = (acde)^2 \leq n^2$$ 
and so at least one of $e, ad, ac, cd$ is $O(n^{2/5})$.

If $e \ll n^{2/5}$, we use \eqref{II-1} and the divisor bound to get $n^{O({1}/{\log \log n})}$ choices for $a,b,d$ for each $e$.  If $ad \ll n^{2/5}$, we use \eqref{II-7} and the divisor bound to get $n^{O({1}/{\log \log n})}$ choices for $a,d,e,f$ for each fixed $ad$.  If $ac \ll n^{2/5}$, we use \eqref{II-8} to get $n^{O({1}/{\log \log n})}$ choices for $a,c,b,f$ for each fixed $ac$.  If $cd \ll n^{2/5}$, we use \eqref{II-9} and the divisor bound to get $n^{O({1}/{\log \log n})}$ choices for $b,c,d,f$ for each fixed $cd$.  Putting all this together we obtain the second part of Proposition \ref{upper}.

\begin{remark}  This argument, together with the fact that a large number $n$
  can be factorised in expected $O(n^{o(1)})$ time (using, say, the quadratic
  sieve \cite{pomerance}), gives an algorithm to find all Type I solutions for
  a given $n$ in expected run time $O(n^{3/5+o(1)})$, and an algorithm to find all the Type II solutions in expected run time $O(n^{2/5+o(1)})$.
\end{remark}

\section{Insolubility for odd squares}\label{odd-sec}

We now prove Proposition \ref{odd}.  Suppose for contradiction that $n$ is an odd perfect square (in particular, $n=1 \mod 8$) with a Type I solution.
Then by Proposition \ref{type-1}, we can find an $\N$-point $(a,b,c,d,e,f)$ in $\Sigma^\I_n$.

Let $q$ be the largest odd factor of $ab$.  From \eqref{I-1} we have $ne+1=0 \mod q$.  Since $n$ is a perfect square, we conclude that
$$
\left( \frac{e}{q} \right) = \left( \frac{-1}{q} \right) = (-1)^{(q-1)/4}$$
thanks to \eqref{quadratic-1}. Since $n=1 \mod 8$, we see from \eqref{I-1} that $e=3 \mod 4$.  By quadratic reciprocity \eqref{quadratic} we thus have
$$
\left( \frac{q}{e} \right) = 1.$$ 
On the other hand, from \eqref{I-2} we see that $ab = -a^2 \mod e$, and thus 
$$ \left( \frac{ab}{e} \right) = \left( \frac{-1}{e} \right) = -1$$
by \eqref{quadratic-1}.  This forces $ab \neq q$, and so (by definition of $q$) $ab$ is even.  By \eqref{I-1}, this forces $e=7 \mod 8$, which by \eqref{quadratic-2} implies that 
$$\left( \frac{2}{e} \right) =1$$ 
and thus 
$$\left( \frac{q}{e} \right) = \left( \frac{ab}{e} \right),$$ 
a contradiction.

The proof in the Type II case is almost identical, using \eqref{II-1}, \eqref{II-2} in place of \eqref{I-1}, \eqref{I-2}; we omit the details.

\section{Lower bounds I}\label{lower-sec}

Now we prove the lower bounds in Theorem \ref{main}.

We begin with the lower bound
\begin{equation}\label{2n-lower}
 \sum_{n \leq N} f_\II(n) \gg N \log^3 N.
\end{equation}

Suppose $a,c,d,e$ are natural numbers with $d$ square-free, $e$ coprime to $ad$, $e > a$, and $acde \leq N/4$.  Then the quantity
\begin{equation}\label{acde}
 n := 4acde - e - 4a^2 d
\end{equation}
is a natural number of size at most $N$, and $(a,ce-a,c,d,e,4acd-1)$ is an $\N$-point of $\Sigma^\II_\n$.  Applying $\pi^\II_n$, we obtain a solution
$$ (x,y,z) = (a(ce-a)d, acdn, (ce-a)cdn)$$
to \eqref{xyz}.  We claim that this is a Type II solution, or equivalently that $a(ce-a)d$ is coprime to $n$.  As $e$ is coprime to $ad$, we see from
\eqref{acde} that $n$ is coprime to $ade$, so it suffices to show that $n$ is coprime to $b := ce-a$.  But if $q$ is a common factor of both $n$ and $b$, then from the identity \eqref{II-8} (with $f = 4acd-1$) we see that $q$ is also a common factor of $a$, a contradiction.  Thus we have obtained a Type II solution.  Also, as $d$ is square-free, any two quadruples $(a,c,d,e)$ will generate different solutions, as the associated sextuples $(a,ce-a,c,d,e,4acd-1)$ cannot be related to each other by the dilation \eqref{dilation}.  Thus, it will suffice to show that there are at least $\delta N \log^3 N$ quadruples $(a,c,d,e) \in \N$ with $d$ square-free, $e$ coprime to $ad$, $e > a$, and $acde \leq N/4$ for some absolute constant $\delta>0$.  Restricting $a,c,d$ to be at most $N^{0.1}$ (say), we see that the number of possible choices of $e$ is at least $\delta' ({N}/{acd}) {\phi(ad)}/{ad}$, where $\phi$ is the Euler totient function and $\delta'>0$ is another absolute constant.  It thus suffices to show that
$$ \sum_{a,c,d \leq N^{0.1}} \mu^2(d) \frac{\phi(ad)}{ad} \frac{1}{adc} \gg \log^3 N,$$
where $\mu$ is the M\"obius function (so $\mu^2(d)=1$ exactly when $d$ is square-free).  Using the elementary estimate $\phi(ad) \geq \phi(a) \phi(d)$ and factorising, we see that it suffices to show that
\begin{equation}\label{doe}
 \sum_{d \leq N^{0.1}} \frac{\mu(d)^2 \phi(d)}{d^2} \gg \log N.
\end{equation}
But this follows from Lemma \ref{upper-crude}.

Now we prove the lower bound
$$ \sum_{n \leq N} f_\I(n) \gg N \log^3 N,$$
which follows by a similar method.

Suppose $a,c,d,f$ are natural numbers with $d$ square-free, $f$ dividing $4a^2d+1$ and coprime to $c$, $d \geq f$, and $acd \leq N/4$.  Then the quantity
\begin{equation}\label{acdef}
n := 4acd - f
\end{equation}
is a natural number which is at most $N$, and $(a, b, c, d, {4a^2 d+1}/{f}, f)$ is an $\N$-point of $\Sigma^\I_n$, where
$$ b := c \frac{4a^2d+1}{f}-e = \frac{na+c}{f}.$$
Applying $\pi^\I_n$, this gives a solution
$$ (x,y,z) = (abdn, acd, bcd)$$
to \eqref{xyz}, and as before the square-free nature of $d$ ensures that each
quadruple $(a,c,d,f)$ gives a different solution.  We claim that this is a Type
I solution, i.e. that $abcd$ is coprime to $n$.  As $f$ divides $4a^2d+1$,
$f$ and with \eqref{acdef} also $n$ is coprime to $ad$. As  $f$ and $c$ are
coprime by assumption, $n$ is coprime to $acd$ by \eqref{acdef}.  
As $b = (na+c)/f$, we conclude that $n$ is also coprime to $b$.

Thus it will suffice to show that there are at least $\delta N \log^3 N$ quadruples $(a,c,d,f) \in \N^4$ with $f$ coprime to $2ac$, and $d$ square-free with $f$ dividing $4a^2d+1$, $d \geq f$, and $acd \leq N/4$, for some absolute constant $\delta>0$.

We restrict $a, c, f$ to be at most $N^{0.1}$.  If $f$ is coprime to $2ac$, then there is a unique primitive residue class of $f$ such that $4a^2d+1$ is a multiple of $f$ for all $d$ in this class.  Also, there are at least $\delta {N}/{acf}$ elements $d$ of this residue class with $d \geq f$ and $acd \leq N/4$ for some absolute constant $\delta>0$; a standard sieving argument shows that a positive proportion of these elements are square-free.  Thus, we have a lower bound of
$$ \sum_{a,c,f \leq N^{0.1}: (f,2ac)=1} \frac{N}{acf} $$
for the number of quadruples.  Restricting $f$ to be odd and then using the crude sieve
\begin{equation}\label{crude-sieve}
 1_{(f,2ac)=1} \geq 1 - \sum_p 1_{p \mid f} 1_{p \mid a} - \sum_p 1_{p \mid f} 1_{p \mid c}
\end{equation}
where $p$ ranges over odd primes, where $1_E$ denotes the indicator function of a statement $E$ (i.e. $1_E=1$ if $E$ holds, and $1_E=0$ otherwise), one easily verifies that the above expression is at least $\delta N \log^3 N$ for some absolute constant $\delta>0$, and the claim follows.

Now we establish the lower bound
$$ \sum_{p \leq N} f_\II(p) \gg N \log^2 N.$$
We will repeat the proof of \eqref{2n-lower}, but because we are now counting primes instead of natural numbers we will need to invoke the Bombieri-Vinogradov inequality at a key juncture.

Suppose $a,c,d,e$ are natural numbers with $d$ square-free, $a,c,d \leq N^{0.1}$, and $e$ between $N^{0.6}$ and $N/4acd$ with
\begin{equation}\label{pacd}
 p := 4acde - e - 4a^2 d
\end{equation}
prime.  Then $p$ is at most $N$ and at least $N^{0.6}$, and in particular is automatically coprime to $ade$ (and thus $ce-a$, by previous arguments).  Thus, as before, each such $(a,c,d,e)$ gives a Type II solution for a prime $p \leq N$, with different quadruples giving different solutions.  Thus it suffices to show that there are at least $\delta N \log^2 N$ quadruples $(a,c,d,e)$ with the above properties for some absolute constant $\delta>0$.

Fix $a,c,d$.  As $e$ ranges from $N^{0.6}$ to $N/4acd$, the expression
\eqref{pacd} traces out a primitive residue class modulo $4acd-1$, omitting at
most $O(N^{0.6})$ members of this class that are less than $N$.  Thus, the
number of primes of the form \eqref{pacd} for fixed $acd$ is
$$\pi(N;4acd-1,-4a^2d) - O(N^{0.6}), $$
where $\pi(N;q,t)$ denotes the number of primes $p<N$ that are congruent to $t$ mod $q$.
We replace $\pi(N;4acd-1,-4a^2d)$ by a good approximation, and bound the error.
If we set
$$D(N;q) := \max_{(a,q)=1} \left|\pi(N;q,a) - \frac{\li(N)}{\phi(q)}\right|$$
(as in \eqref{dnq}), where $\li(x) := \int_0^x {dt}/{\log t}$ is the Cauchy
principal value of the logarithmic integral, the number of primes of the form 
 \eqref{pacd} for fixed $acd$ is at least
$$ \frac{\li(N)}{\phi(4acd-1)} - D(N;4acd-1) - O(N^{0.6})$$
The overall contribution of those $acd$ combinations referring to the $O(N^{0.6})$
error term is at most 
$O( (N^{0.1})^3 N^{0.6} ) = o(N \log^2 N)$, while $\li(N)$ is comparable to $N/\log N$, so it will suffice to show the lower bound
\begin{equation}\label{lower-ii} \sum_{a,c,d \leq N^{0.1}}  \frac{\mu^2(d)}{\phi(4acd-1)} \gg \log^3 N
\end{equation}
and the upper bound
\begin{equation}\label{upper-ii} \sum_{a,c,d \leq N^{0.1}} D(N;4acd-1) = o( N \log^2 N ).
\end{equation}
We first prove \eqref{lower-ii}.  Using the trivial bound $\phi(4acd-1) \leq 4acd$, it suffices to show that
$$ \sum_{a,c,d \leq N^{0.1}} \frac{\mu^2(d) }{acd} \gg \log^3 N$$
which upon factorising reduces to showing
$$ \sum_{d \leq N^{0.1}} \frac{\mu^2(d)}{d} \gg \log N.$$
But this follows from Lemma \ref{upper-crude}.

Now we show \eqref{upper-ii}.  Writing $q := 4acd-1$, we can upper bound the left-hand side of \eqref{upper-ii} somewhat crudely by
$$ \sum_{q \leq N^{0.3}} D(N;q) \tau(q+1)^2.$$
From divisor moment estimates (see \eqref{tau-2}) we have
$$ \sum_{q \leq N^{0.3}} \frac{\tau(q+1)^4}{q} \ll \log^{O(1)} N;$$
hence by Cauchy-Schwarz, we may bound the preceding quantity by
$$ \ll \log^{O(1)} N \left(\sum_{q \leq N^{0.3}} q D(N;q)^2\right)^{1/2}.$$
Using the trivial bound $D(N;q) \ll N/q$, we bound this in turn by
$$ \ll N^{1/2} \log^{O(1)} N \left(\sum_{q \leq N^{0.3}} D(N;q)\right)^{1/2}.$$
But from the Bombieri-Vinogradov inequality \eqref{bombieri}, we have
$$ \sum_{q \leq N^{0.3}} D(N;q) \ll_A N \log^{-A} N$$
for any $A > 0$, and the claim \eqref{upper-ii} follows.

Finally, we establish the lower bound 
$$ \sum_{p \leq N} f_\I(p) \gg N \log^2 N.$$
Unsurprisingly, we will repeat many of the arguments from preceding cases.
Suppose $a,c,d,f$ are natural numbers with $a,c,f \leq N^{0.1}$ with $(a,c)=(2ac,f)=1$, $N^{0.6} \leq d \leq N/4ac$, such that $f$ divides $4a^2d+1$, and the quantity
\begin{equation}\label{acdef-p}
p := 4acd - f
\end{equation}
is prime.  Then $p$ is at most $N$ and is at least $N^{0.4}$, and in particular is coprime to $a,c,f$; from \eqref{acdef-p} it is coprime to $d$ also.  This thus yields a Type I solution for $p$; by the coprimality of $a,c$, these solutions are all distinct as no two of the associated sextuples $(a, b, c, d, {4a^2 d+1}/{f}, f)$ can be related by \eqref{dilation}.  Thus it suffices to show that there are at least $\delta N \log^2 N$ quadruples $(a,c,d,f)$ with the above properties for some absolute constant $\delta>0$.

For fixed $a,c,f$, the parameter $d$ traverses a primitive congruence class
modulo $f$, and $p=4acd-f$ traverses 
a primitive congruence class modulo $4acf$, that omits at most $O(N^{0.6})$ of
the elements of this class that are less than $N$.  
By \eqref{dnq}, the total number of $d$ that thus give a prime $p$ for fixed $acf$ is at least
$$ \frac{\li(N)}{\phi(4acf)} - D(N;4acf) - O(N^{0.6})$$
and so by arguing as before it suffices to show the bounds
$$ \sum_{a,c,f \leq N^{0.1}} 1_{(a,c) = (2ac,f)=1} \frac{1}{\phi(4acf)} \gg \log^3 N$$
and
$$ \sum_{a,c,f \leq N^{0.1}} D(N;4acf) = o( N \log^2 N).$$
But this is proven by a simple modification of the arguments used to establish \eqref{upper-ii}, \eqref{lower-ii} (the constraints $(a,c)=(2ac,f)=1$ being easily handled by an elementary sieve such as \eqref{crude-sieve}).  This concludes all the lower bounds for Theorem \ref{main}.

\section{Lower bounds II}{\label{lowerboundsec-2}}

Here we prove Theorem \ref{lowerboundturankubilius}.

\begin{proof}
For any natural numbers $m,n$, let $g_2(m,n)$ denote the number of solutions $(x,y)\in \N^2$ to the Diophantine equation 
 ${m}/{n}={1}/{x}+{1}/{y}$.  Since
$$ \frac{1}{x} + \frac{1}{y} = \frac{1}{x} + \frac{1}{2y} + \frac{1}{2y}$$
we conclude the crude bound $f(n) \geq g_2(4,n)$ for any $n$.  

In \cite[Theorem 1]{browning} it was shown that $g_2(m,n) \gg 3^s$ whenever $n$
is the product of $s$ distinct primes congruent to $-1 \mod m$.  
Since $g_2(m, kn) \geq g_2(m, n)$ for any $k$, we conclude that
\begin{equation}\label{flower}
f(n) \geq g_2(4,n) \gg 3^{w_4(n)}
\end{equation}
for all $n$, where $w_m(n)$ is the number of distinct prime factors of $n$ that
are congruent 
to $-1 \mod m$.

Now we prove the first part of the theorem.  Let $s$ be a large number, and let $n$ be the product of the first $s$ primes equal to $-1 \mod 4$, then from the prime number theorem in arithmetic progressions we have $\log n = (1+o(1)) s \log s$, and thus $s = (1+o(1)) {\log n}/{\log \log n}$.  From \eqref{flower} we then have
$$ f(n) \gg \exp\left( \log 3 (1+o(1)) \frac{\log n}{\log \log n}\right).$$
Letting $s \to \infty$ we obtain the claim.

For the second part of the theorem, we use the Tur\'{a}n-Kubilius inequality 
(Lemma \ref{turan-kubilius}) to the additive function $w_4$.  This inequality gives that
$$\sum_{n \leq N} |w_4(n)- \frac{1}{2}\log \log N|^2 \ll N \log \log N.$$
From this and Chebyshev's inequality (see also \cite[p. 307]{tenenbaum}), we see that 
$$ w_4(n) \geq \frac{1}{2}\log \log n +O(\xi(n) \sqrt{\log \log n})$$
for all $n$ in a density $1$ subset of $\N$.  The claim then follows from \eqref{flower}.

Now we turn to the third part of the theorem.  We first deal with the case when $p = 4t-1$ is prime, then
$$ \frac{4}{p} = \frac{4}{p+1} + \frac{1}{t(4t-1)}$$
which in particular implies that
$$ f(p) \geq g_2(4,p+1)$$
and thus
$$ f(p) \gg 3^{w_4(p+1)}.$$
By Lemma \ref{barban-turankubilius} we know that
\begin{equation}\label{w4}
 w_4(p+1) \geq \left(\frac{1}{2}-o(1)\right) \log \log p
 \end{equation}
for all $p$ in a a set of primes of relative prime density $1$.

It remains to deal with those primes $p$ congruent to $1 \mod 4$.  Writing
$$ \frac{4}{p} = \frac{1}{(p+3)/4} + \frac{3}{p(p+3)/4}$$
we see that
$$ f(p) \geq g_2(3,p(p+3)/4) \gg 3^{w_3((p+3)/4)} \gg 3^{w_3(p+3)}.$$
It thus suffices to show that
$$ w_3(p+3) \geq \left(\frac{1}{2}-o(1)\right) \log \log p$$
for all $p$ in a set of primes of relative density $1$.  
But this can be established by the same techniques used to establish
\eqref{w4}. 

\end{proof}

\section{Sums of divisor functions}\label{kab-sec}

Let $P: \Z \to \Z$ be a polynomial with integer coefficients, which for simplicity we will assume to be non-negative, and consider the sum
$$ \sum_{n \leq N} \tau(P(n)).$$
In \cite{erdos}, Erd\H{o}s established the bounds
\begin{equation}\label{oed}
N \log N \ll_{P} \sum_{n \leq N} \tau(P(n)) \ll_{P} N \log N 
\end{equation}
for all $N>1$ and for $P$ irreducible; note that the implied constants here can depend on both the degree and the coefficients of $P$.  This is of course consistent with the heuristic $\tau(n) \sim \log n$ ``on average''.  Of course, the irreducibility hypothesis is necessary as otherwise $P(n)$ would be expected to have many more divisors.

In this section we establish a refinement of the Erd\H{o}s upper bound that
gives a more precise description of the dependence of the implied constant on
$P$ (and with irreducibility replaced by a much weaker hypothesis), 
which may be of some independent interest:

\begin{theorem}[Erd\H{o}s-type bound]\label{erdos-bound}  Let $N > 1$, let $P$ be a polynomial with degree $D$ and coefficients being non-negative integers of magnitude at most $N^l$.  For any natural number $m$, let $\rho(m)$ be the number of roots of $P \mod m$ in $\Z/m\Z$, and suppose one has the bound
\begin{equation}\label{pj}
\rho( p^j ) \leq C
\end{equation}
for all primes $p$ and all $j \geq 1$.  Then
$$ N \sum_{m \leq N} \frac{\rho(m)}{m} \ll \sum_{n \leq N} \tau(P(n)) \ll_{D,l,C} N \sum_{m \leq N} \frac{\rho(m)}{m}.$$
\end{theorem}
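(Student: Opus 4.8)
The plan is to follow the classical Erd\H{o}s argument for $\sum_{n\le N}\tau(P(n))$, but bookkeeping carefully how the implied constants depend on $D$, $l$, and $C$, and replacing the single-prime contributions by the counting function $\rho(m)$. Write $\tau(P(n)) = \sum_{d\mid P(n)} 1$. Since $P(n)\le N^{O_{D,l}(1)}$ for $n\le N$, every divisor $d$ of $P(n)$ can be paired with its cofactor $P(n)/d$, and one of the two is at most $\sqrt{P(n)} = N^{O_{D,l}(1)}$; so up to a factor depending only on $D,l$ it suffices to count, for each $m$ in a range $m\le N^{O_{D,l}(1)}$, the number of $n\le N$ with $m\mid P(n)$. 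That count is $\sum_{n\le N} 1_{m\mid P(n)} = \frac{N}{m}\rho(m) + O(\rho(m))$, by splitting the interval $[1,N]$ into residue classes mod $m$. Summing the main term over $m$ gives $N\sum_{m} \frac{\rho(m)}{m}$, which after truncating to $m\le N$ (the tail $m>N$ being controlled, using $\rho(p^j)\le C$ to get $\rho(m)\ll_C m^{o(1)}$ multiplicatively and hence $\sum_{m>N}\rho(m)/m^{1+\delta}$ small, or more simply absorbing it since divisors come in conjugate pairs around $\sqrt{P(n)}\le N$) yields the claimed shape $N\sum_{m\le N}\rho(m)/m$.

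The lower bound is the easy direction: for each $m\le N$ and each of the $\rho(m)$ residue classes mod $m$ on which $P$ vanishes, there are $\gg N/m$ values of $n\le N$ (say $n\le N/2$ to be safe) in that class, each contributing a divisor $m$ to $\tau(P(n))$; since distinct $m$ give distinct divisors, $\sum_{n\le N}\tau(P(n)) \gg N\sum_{m\le N}\rho(m)/m$ with no dependence on $D,l,C$ at all. This matches the left inequality in the statement.

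For the upper bound, the subtlety — and the main obstacle — is the error term $O(\rho(m))$ summed over $m\le N^{O_{D,l}(1)}$: naively $\sum_m \rho(m)$ could be as large as $N^{O_{D,l}(1)}$, which is far too big. The fix, exactly as in Erd\H{o}s's paper, is \emph{not} to range $m$ all the way to $\sqrt{P(n)}$, but to count divisors of $P(n)$ by writing each such divisor as a product of prime-power "blocks" and controlling, via the hypothesis \eqref{pj}, how many $n$ can have a given large prime power dividing $P(n)$. Concretely: for a prime $p$ and $j\ge 1$, the number of $n\le N$ with $p^j\mid P(n)$ is $\le \frac{N}{p^j}\rho(p^j) + \rho(p^j) \le \frac{CN}{p^j} + C$. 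One separates divisors of $P(n)$ into the part supported on small primes (say $p\le N$, or $p^j \le N$) and the part supported on large prime powers; the multiplicativity-type estimate $\rho(m_1m_2)\le \rho(m_1)\rho(m_2)$ for coprime $m_i$ (which follows from CRT) together with \eqref{pj} lets one bound the number of divisors of $P(n)$ above a threshold, and a dyadic decomposition over the size of prime powers controls the contribution, with every constant tracked as a function of $D$, $l$, $C$ only. Assembling the small-prime main term $N\sum_{m\le N}\rho(m)/m$ and the $O_{D,l,C}(N)$ (or $O_{D,l,C}(N\sum_{m\le N}\rho(m)/m)$, since that sum is $\gg \log N \gg 1$) bound on the remaining pieces gives the right-hand inequality. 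I expect the delicate point to be making the large-prime-power tail genuinely $O_{D,l,C}(\,\cdot\,)$ uniformly — this is where the uniform bound $C$ in \eqref{pj}, rather than an $O_D(1)$ that could secretly depend on the coefficients, is essential, and where one must be careful that the number of prime powers $p^j\le N^{O_{D,l}(1)}$ and the dyadic ranges contribute only $\log$-type factors that are absorbed into $\sum_{m\le N}\rho(m)/m$.
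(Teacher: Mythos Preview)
Your lower bound is fine and matches the paper. The upper bound sketch, however, has a real gap at exactly the point you flag as ``delicate''. After pairing each divisor with its cofactor you are forced to sum $\sum_{m\le N^{O_{D,l}(1)}}\bigl(\tfrac{N}{m}\rho(m)+O(\rho(m))\bigr)$, and the accumulated error $\sum_{m\le N^{O_{D,l}(1)}}\rho(m)$ is of size $N^{O_{D,l}(1)+o(1)}$, which is fatal. Your proposed remedy, separating divisors into ``small prime powers'' and ``large prime powers'', does not address this: a divisor $m$ of $P(n)$ with $N<m\le N^{O_{D,l}(1)}$ can be built entirely out of primes $\le N$ (indeed $\le N^{1/2}$), so there need be no large prime power to peel off. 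The hypothesis $\rho(p^j)\le C$ controls how many $n$ have $p^j\mid P(n)$, but it does not by itself limit the number of large divisors a single $P(n)$ can have.

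The paper's argument supplies two ingredients you are missing. First, a cheap second-moment bound $\sum_{n\le N}\tau(P(n))^2\ll N\log^{O(1)}N$ via Landreau's lemma, which (through Cauchy--Schwarz) lets one discard any exceptional set of $n$ of size $N\log^{-A}N$. Second, and more substantially, the core Erd\H{o}s mechanism: list the prime factors of $P(n)$ as $p_1\le\cdots\le p_J$, take the largest $j$ with $p_1\cdots p_j\le N$, and observe that if $p_j\in(N^{1/(r+1)},N^{1/r}]$ then $J=j+O_{D,l}(r)$, so $\tau(P(n))\ll O(1)^r\cdot\#\{m\le N:\ m\mid P(n),\ m\ N^{1/r}\text{-smooth}\}$. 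The exponential loss $O(1)^r$ is then beaten by the $N^{1/r}$-smoothness: a pigeonhole argument forces many distinct prime factors of $m$ to lie in some dyadic window $[N^{1/2^{t+1}r},N^{1/2^t r}]$, and the resulting $1/\lfloor rt/100\rfloor!$ from overcounting ordered tuples dominates $O(1)^{rt}$. None of this structure is present in your sketch; the phrase ``dyadic decomposition over the size of prime powers'' is not a substitute for it.
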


\begin{remark} For any fixed $P$, one has \eqref{pj} for some $C = C_P$ (by many applications of Hensel's lemma, and treating the case of small $p$ separately), and when $P$ is irreducible one can use tools such as Landau's prime ideal theorem to show that $\sum_{m \leq N} {\rho(m)}/{m} \ll_P \log N$ (indeed, much more precise asymptotics are available here).  
See \cite{stewart} for more precise bounds on $C$ in terms of quantities such as the discriminant $\Delta(P)$ of $P$; bounds of this type go back to Nagell \cite{nagell} and Ore \cite{ore} (see also \cite{sandor}, \cite{huxley}).  One should in fact be able to establish a version of Theorem \ref{erdos-bound} in which the implied constant depends explicitly on the $\Delta(P)$ rather than on $C$ by using the estimates of Henriot \cite{henriot} (which build upon earlier work of Barban-Vehov \cite{barban}, Daniel \cite{daniel}, Shiu \cite{shiu}, Nair \cite{nair}, and Nair-Tenenbaum \cite{nairt}), but we will not do so here, as we will need to apply this bound in a situation in which the discriminant may be large, but for which the bound $C$ in \eqref{pj} can still be taken to be small.  However, the version of Nair's estimate given in \cite[Theorem 2]{breteche}, having no explicit dependence on the discriminant, may be able to give an alternate derivation of Theorem \ref{erdos-bound}; we thank the referee for this observation.

Thus we see that Erd\H{o}s' original result \eqref{oed} is a corollary of Theorem \ref{erdos-bound}.  For special types of $P$ (e.g. linear or quadratic polynomials), more precise asymptotics on $\sum_{n \leq N} \tau(P(n))$ are known (see e.g. \cite{fouvry}, \cite{fi} for the linear case, and \cite{hooley}, \cite{scourfield},
\cite{mckee}, \cite{mckee2}, \cite{mckee3} for the quadratic case), but the
methods used are less elementary (e.g. Kloosterman sum bounds in the linear
case, and class field theory in the quadratic case), and do not cover all
ranges of coefficients of $P$ for the applications to the Erd\H{o}s-Straus
conjecture.  See also \cite{pom} for another upper bound in the quadratic case
which is uniform over large ranges of coefficients but gives weaker bounds 
(losing some powers of $\log N$).
\end{remark}

\begin{proof}
Our argument will be based on the methods in \cite{erdos}.   In this proof all
implied constants will be allowed to depend on $D,l$ and $C$.

We begin with the lower bound, which is very easy.  Clearly
\begin{equation}\label{low}
 \tau(P(n)) \geq \sum_{m \leq N: m \mid P(n)} 1
\end{equation}
and thus
$$ \sum_{n \leq N} \tau(P(n)) \geq \sum_{m \leq N} \sum_{n \leq N: m \mid P(n)} 1.$$
The expression $P(n) \mod m$ is periodic in $n$ with period $m$, and thus for $m \leq N$ one has
\begin{equation}\label{period}
N \frac{\rho(m)}{m} \ll \sum_{n \leq N: m \mid P(n)} 1 \ll N \frac{\rho(m)}{m}
\end{equation}
which gives the lower bound on $\sum_{n \leq N} \tau(P(n))$.

Now we turn to the upper bound, which is more difficult.  We first establish a preliminary bound
\begin{equation}\label{note}
 \sum_{n \leq N} \tau(P(n))^2 \ll N \log^{O(1)} N
\end{equation}
using an argument of Landreau \cite{landreau}.  Let $n \leq N$.  By the coefficient bounds on $P$ we have
\begin{equation}\label{n-bound}
P(n) \ll N^{O(1)}.
\end{equation}
Using the main lemma from \cite{landreau}, we conclude that
$$ \tau(P(n))^2 \ll \sum_{m \leq N: m \mid P(n)} \tau(m)^{O(1)}$$
and thus
$$ \sum_{n \leq N} \tau(P(n))^2 \ll \sum_{m \leq N} \tau(m)^{O(1)} \sum_{n \leq N: m \mid P(n)} 1.$$
Using \eqref{pj}, we may crudely bound $\sum_{n \leq N: m \mid P(n)} 1 \leq \tau(m)^{O(1)}$, thus
$$ \sum_{n \leq N} \tau(P(n))^2 \ll \sum_{m \leq N} \tau(m)^{O(1)}$$
and the claim then follows from Lemma \ref{upper-crude}.

In view of \eqref{note} and the Cauchy-Schwarz inequality, we may discard from the $n$ summation any subset of $\{1,\ldots,N\}$ of cardinality at most $N \log^{-C'} N$ for sufficiently large $C'$.  We will take advantage of this freedom in the sequel.

Suppose for the moment that we could reverse \eqref{low} and obtain the bound
\begin{equation}\label{dpn}
\tau(P(n)) \ll \sum_{m \leq N: m \mid P(n)} 1.
\end{equation}
Combining this with \eqref{period}, we would obtain
\begin{align*}
 \sum_{n \leq N} \tau(P(n)) &\ll \sum_{m \leq N} \sum_{n \leq N: m \mid P(n)} 1 \\
 &\ll \sum_{m \leq N} \frac{N}{m} \rho(m)
\end{align*}
which would give the theorem.  Unfortunately, while \eqref{dpn} is certainly true when $P(n) \leq N^2$, it can fail for larger values of $P(n)$, and from the coefficient bounds on $P$ we only have the weaker upper bound \eqref{n-bound}.

Nevertheless, as observed by Erd\H{o}s, we have the following substitute for \eqref{dpn}:

\begin{lemma}\label{leo}  Let $C'$ be a fixed constant.  For all but at most $O(N \log^{-C'} N)$ values of $n$ in the range $1 \leq n \leq N$, either \eqref{dpn} holds, or one has
$$
\tau(P(n)) \ll O(1)^r \sum_{m \in S_r: m \mid P(n)} 1
$$
for some $2 \leq r \ll (\log \log N)^2$, where $S_r$ is the set of all $m$ with the following properties:
\begin{itemize}
\item $m$ lies between $N^{1/4}$ and $N$.
\item $m$ is $N^{1/r}$-smooth (i.e. $m$ is divisible by any prime larger than $N^{1/r}$).
\item $m$ has at most $(\log \log N)^2$ prime factors.
\item $m$ is not divisible by any prime power $p^k$ with $p \leq N^{1/2}$, $k > 1$, and $p^k \geq N^{1/8(\log \log N)^2}$.
\end{itemize}
\end{lemma}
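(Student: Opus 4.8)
\emph{Overall strategy.} The plan is to run the argument of Erd\H{o}s \cite{erdos}: we would like to reverse \eqref{low} and bound $\tau(P(n))$ by the number of divisors of $P(n)$ that are at most $N$, and the only obstruction is that $P(n)$ can be as large as $N^{O(1)}$, so a divisor $d\mid P(n)$ with $d>N$ need not admit a divisor in $(N^{1-o(1)},N]$; this fails precisely when $d$ has a large prime factor, or a large power of a small prime. The set $S_r$ is designed to collect those divisors for which this ``trimming'' has to pass through roughly $r$ steps using only primes of size at most $N^{1/r}$, and one proves the lemma by choosing $r$ adapted to the arithmetic of $P(n)$.

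\emph{Discarding exceptional $n$.} I would first remove three families of $n$, each of size $O(N\log^{-C'}N)$, hence discardable in view of \eqref{note} and Cauchy--Schwarz. First, those $n$ with $P(n)\leq N^2$: for these, every divisor pair $(d,P(n)/d)$ has smaller member $\leq N$, so \eqref{dpn} holds with an absolute constant; thus we may assume $P(n)>N^2$. Second, those $n$ for which $P(n)$ has more than $(\log\log N)^2$ distinct prime factors: since $2^{\omega(P(n))}\leq\tau(P(n))$ and $\sum_{n\leq N}\tau(P(n))\ll N\log^{O(1)}N$ by \eqref{note} and Cauchy--Schwarz, the number of such $n$ is at most $2^{-(\log\log N)^2}N\log^{O(1)}N=O(N\log^{-C'}N)$ for $N$ large. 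Third, those $n$ for which $P(n)$ is divisible by a prime power $p^k$ with $p\leq N^{1/2}$, $k\geq2$ and $p^k\geq N^{1/8(\log\log N)^2}$: by \eqref{pj} the number of these is $\ll N\sum_{k\geq2}\sum_{p^k\geq Y}p^{-k}\ll NY^{-1/2}$ with $Y:=N^{1/8(\log\log N)^2}$, again $O(N\log^{-C'}N)$. For every surviving $n$ the integer $M:=P(n)$ then satisfies $M>N^2$, has at most $(\log\log N)^2$ distinct prime factors, and is not divisible by a forbidden prime power; consequently every divisor $m$ of $M$ automatically meets the last two requirements in the definition of $S_r$, and we only need to produce divisors of $M$ in $[N^{1/4},N]$ that are smooth.

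\emph{Choosing $r$.} For $r\geq2$ write $M=M_s(r)M_t(r)$, where $M_s(r)$ is the largest $N^{1/r}$-smooth divisor of $M$. The distinct primes of $M_t(r)$ exceed $N^{1/r}$ and $M_t(r)\leq M\ll N^{O(1)}$, so $M_t(r)$ is a product of $O(r)$ primes and $\tau(M_t(r))\ll O(1)^r$; since $\tau(M)=\tau(M_s(r))\tau(M_t(r))$, it suffices to find $r$ with $\#\{m\mid M_s(r):N^{1/4}\leq m\leq N\}\gg\tau(M_s(r))/O(1)^r$. If the $N^{1/2}$-smooth part $M_s(2)$ is $\leq N$, then all $\tau(M_s(2))$ divisors of $M_s(2)$ are already $\leq N$ and divide $M$, while $\tau(M_t(2))\ll O(1)$, so \eqref{dpn} holds and we are done. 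Otherwise $M_s(2)>N$; let $r$ be the largest integer $\geq2$ with $M_s(r)\geq N^{1/4}$. Using $\omega(M)\leq(\log\log N)^2$ together with the third reduction above, every prime power exactly dividing $M_s(r)$ is at most $\max(N^{1/r},N^{1/8(\log\log N)^2})$, so the product of at most $(\log\log N)^2$ of them is at most $N^{\max(1/4,(\log\log N)^2/r)}$; comparing with $M_s(r)\geq N^{1/4}$ forces $r\leq4(\log\log N)^2$, which lies in the claimed range.

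\emph{Counting divisors of a smooth number in the window $[N^{1/4},N]$ --- the main obstacle.} It remains to show that the $N^{1/r}$-smooth integer $K:=M_s(r)$, which satisfies $N^{1/4}\leq K\ll N^{O(1)}$ and has at most $(\log\log N)^2$ prime factors, has $\gg\tau(K)/O(1)^r$ divisors in $[N^{1/4},N]$; this is the technical heart. Writing $K=\prod_ip_i^{a_i}$ and identifying divisors with lattice points $(b_i)\in\prod_i[0,a_i]$ of ``height'' $\sum_ib_i\log p_i$, the divisors in $[N^{1/4},N]$ form a slab of width $\frac34\log N\geq\frac34\,r\log(N^{1/r})$ that no single coordinate step can jump over, because $K$ is $N^{1/r}$-smooth. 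I would then prove the bound by a trim-and-grow map: send an arbitrary divisor $d$ of $K$ into $[N^{1/4},N]$ by repeatedly adjoining the smallest unused prime factor of $K$ (if $d<N^{1/4}$) or deleting the largest prime factor (if $d>N$) until the window is reached, and show that this map has fibres of size $O(1)^r$. Smoothness guarantees the window is reached in $O(r)$ steps; the bounds $\omega(K)\leq(\log\log N)^2$ and reduction (iii) (which forbids any prime power of $K$ from exceeding $N^{1/2}$, so that no single coordinate can dominate the height) keep the fibres under control. Bounding these fibres by $O(1)^r$ --- tracking which primes are adjoined or deleted and in what order, and showing the number of consistent histories is at most exponential in $r$ --- is where the bulk of the bookkeeping lies and is the principal difficulty; combining it with the previous paragraph yields the lemma.
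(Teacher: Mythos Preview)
Your exceptional-set removal is essentially the same as the paper's, and your overall plan---reduce to the $N^{1/r}$-smooth part, control the rough part by $O(1)^r$, then count smooth divisors in a window---is the right shape.  The gap is in the final step, which you correctly flag as ``the principal difficulty'' but do not carry out.  Your reference object $K=M_s(r)$ (the full $N^{1/r}$-smooth part of $M$) can be as large as $N^{O(1)}$, so the window $[N^{1/4},N]$ is genuinely narrow relative to $K$, and your trim-and-grow map does not come with an obvious $O(1)^r$ fibre bound: reversing the process from a target $m$ allows many histories, and it is not clear how to bound them by $O(1)^r$ using only smoothness and the $(\log\log N)^2$ bound on $\omega$.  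This step is where the argument actually lives, and the sketch does not yet contain a proof.

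The paper avoids the whole difficulty by choosing a different reference divisor.  Rather than the full smooth part, it orders the prime factors of $P(n)$ with multiplicity as $p_1\le\cdots\le p_J$ and takes $j$ maximal with $p_1\cdots p_j\le N$; after the same case analysis you perform, one lands in the situation $N^{1/2}<p_1\cdots p_j\le N$ with $p_j\le N^{1/2}$.  One then defines $r$ by $N^{1/(r+1)}<p_j\le N^{1/r}$, so that $p_{j+1},\ldots,p_J$ are all at least $N^{1/(r+1)}$ and hence $J=j+O(r)$, giving $\tau(P(n))\ll O(1)^r\,\tau(p_1\cdots p_j)$.  The point is that $X:=p_1\cdots p_j$ already lies in $[N^{1/2},N]$, so the window count is a one-liner: at least half the divisors of $X$ are $\ge\sqrt{X}\ge N^{1/4}$, and all divisors of $X$ are $\le N$ and $N^{1/r}$-smooth.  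Replacing your $K=M_s(r)$ by this $X$ collapses your ``main obstacle'' to a triviality.
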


The point here is that the exponential loss in the $O(1)^r$ factor will be more than compensated for by the $N^{1/r}$-smooth requirement, which as we shall see gains a factor of $r^{-cr}$ for some absolute constant $c>0$.

\begin{proof}  The claim follows from \eqref{dpn} when $P(n) \leq N^2$, so we may assume that $P(n) > N^2$.  

We factorise $P(n)$ as
$$ P(n) = p_1 \ldots p_J$$
where the primes $p_1 \leq \ldots \leq p_J$ are arranged in non-decreasing order.  Let $0 \leq j < J$ be the largest integer such that $p_1 \ldots p_j \leq N$.  If $j=0$ then all prime factors of $P(n)$ are greater than $N$, and thus by \eqref{n-bound} we have $J=O(1)$ and thus $\tau(P(n)) = O(1)$, which makes the claim \eqref{dpn} trivial.  Thus we may assume that $j \geq 1$.

Suppose first that all the primes $p_{j+1},\ldots,p_J$ have size at least $N^{1/2}$.  Then from \eqref{n-bound} we in fact have $J = j+O(1)$, and so
$$ \tau(P(n)) \ll \tau(p_1 \ldots p_j).$$
Note that every factor of $p_1 \ldots p_j$ divides $P(n)$ and is at most $N$, which gives \eqref{dpn}.  Thus we may assume that $p_{j+1}$, in particular, is less than $N^{1/2}$, which forces
\begin{equation}\label{naples} N^{1/2} < p_1 \ldots p_j \leq N
\end{equation}
and $p_j <N^{1/2}$.

Following \cite{erdos}, we eliminate some small exceptional sets of natural numbers $n$.  First we consider those $n$ for which $P(n)$ has at least $(\log \log N)^2$ distinct prime factors.  For such $P(n)$, one has $\tau(P(n)) \geq 2^{(\log \log N)^2}$, which is asymptotically larger than any given power of $\log N$; thus by \eqref{note}, the set of such $n$ has size at most $O( N \log^{-C'} N )$ and can be discarded.

Next, we consider those $n$ for which $P(n)$ is divisible by a prime power $p^k$ with $p \leq N^{1/2}$, $k > 1$, and $p^k \geq N^{1/8(\log \log N)^2}$.  By reducing $k$ if necessary we may assume that $p^k \leq N$. For each $p$ and $k$, there are at most $O( ({N}/{p^k}) \rho(p^k) ) = O( {N}/{p^k} )$ numbers $n$ with $P(n)$ divisible by $p^k$, thanks to \eqref{pj}; thus the total number of such $n$ is bounded by
$$ \ll N \sum_{p \leq N^{1/2}} \sum_{j \geq 2: p^j \geq N^{1/8(\log \log N)^2}} \frac{1}{p^j}$$
which can easily be computed to be $O( N \log^{-C'} N)$.  Thus we may discard all $n$ of this type.

After removing all such $n$, we must have $p_j > N^{1/8(\log \log N)^2}$.  Indeed, after eliminating the exceptional $n$ as above, $p_1 \ldots p_j$ is the product  of at most $(\log \log N)^2$ prime powers, each of which is bounded by $N^{1/8(\log\log N)^2}$, or is a single prime larger than $N^{1/8(\log \log N)^2}$.  The former possibility thus contributes at most $N^{1/8}$ to the final product $p_1 \ldots p_j$; from \eqref{naples} we conclude that the latter possibility must occur at least once, and the claim follows.

Let $r$ be the positive integer such that
$$ N^{1/(r+1)} < p_j \leq N^{1/r},$$
then $2 \leq r \ll (\log \log N)^2$.  The primes $p_{j+1},\ldots,p_J$ have size at least $N^{1/(r+1)}$, so by \eqref{n-bound} we have $J = j + O(r)$, which implies that
$$ \tau(P(n)) \ll O(1)^r \tau(p_1 \ldots p_j).$$
As $p_1\ldots p_j$ is at least $N^{1/2}$, we have
$$ \tau(p_1 \ldots p_j) 
\leq 2 \sum_{m \mid p_1 \ldots p_j; m \geq (p_1 \ldots p_j)^{1/2}} 1
\leq 2 \sum_{m \mid p_1 \ldots p_j; m \geq N^{1/4}} 1.$$
Note that all $m$ in the above summand lie in $S_r$ and divide $P(n)$.  The claim follows.
\end{proof}

Invoking the above lemma, it remains to bound
\begin{align*}
&\sum_{m \leq N} \sum_{n \leq N: m \mid P(n)} 1  
\quad + \sum_{r=2}^{O((\log \log N)^2)} O(1)^r \sum_{m \in S_r} \sum_{n \leq N: m \mid P(n)} 1.
\end{align*}
by $O( N \sum_{n \leq N} {P(m)}/{m} )$.  The first term was already shown to be acceptable by \eqref{period}.  For the second sum, we also apply \eqref{period} and bound it by
\begin{equation}\label{modo}
 \ll N \sum_{r=2}^{O((\log \log N)^2)} O(1)^r \sum_{m \in S_r}  \frac{\rho(m)}{m}.
 \end{equation}
To estimate this expression, let $r, m$ be as in the above summation, and factor $m$ into primes.  As in the proof of Lemma \ref{leo}, the contribution to $m$ coming from primes less than $N^{1/8(\log\log N)^2}$ is at most $N^{1/8}$, and the primes larger than $N^{1/8(\log\log N)^2}$ that divide $m$ are distinct.  Hence, by the pigeonhole principle (as in \cite{erdos}), there exists $t \geq 1$ with $r2^t \ll (\log \log N)^2$ such that the $N^{1/r}$-smooth number $m$ has at least $\lfloor{rt}/{100}\rfloor$ distinct prime factors between $N^{1/2^{t+1} r}$ and $N^{1/2^t r}$, and can thus be factored as $m = q_1 \ldots q_{\lfloor{rt}/{100}\rfloor} u$ where $q_1 < \ldots < q_{\lfloor{rt}/{100}\rfloor}$ are primes between $N^{1/2^{t+1} r}$ and $N^{1/2^t r}$, and $u$ is an integer of size at most $N$.  From the Chinese remainder theorem and \eqref{pj} we have the crude bound
$$ \rho(m) \ll O(1)^{rt} \rho(u)$$
and thus
$$ \sum_{m \in S_r}  \frac{\rho(m)}{m} \ll \sum_{t=1}^\infty O(1)^{rt} \frac{1}{\lfloor\frac{rt}{100}\rfloor!} \left(\sum_{N^{1/2^{t+1}r} \leq p \leq N^{1/2^t r}} \frac{1}{p}\right)^{\lfloor{rt}/{100}\rfloor} \sum_{u \leq N} \frac{\rho(u)}{u}.$$
By the standard asymptotic $\sum_{p<x} {1}/{p} = \log\log x + O(1)$, we have
$$\sum_{N^{1/2^{t+1}r} \leq p \leq N^{1/2^t r}} \frac{1}{p} = O(1);$$
putting this all together, we can bound \eqref{modo} by
$$ \ll \left(\sum_{r=2}^\infty \sum_{t=1}^\infty \frac{O(1)^{rt}}{\lfloor\frac{rt}{100}\rfloor!}\right) \sum_{m \leq N} \frac{\rho(m)}{m}$$
and the claim follows.
\end{proof}

We isolate a simple special case of Theorem \ref{erdos-bound}, when the polynomial $P$ is linear:

\begin{corollary}\label{eb}  If $a, b, N$ are natural numbers with $a,b \ll N^{O(1)}$, then
$$ \sum_{n \leq N} \tau(an+b) \ll \tau( (a,b) ) N \log N$$
where $(a,b)$ is the greatest common divisor of $a$ and $b$.
\end{corollary}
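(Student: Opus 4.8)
The plan is to obtain this as a direct consequence of Theorem \ref{erdos-bound} applied to the linear polynomial $P(n) = an+b$, after first extracting the common factor. Write $g := (a,b)$ and $a = ga'$, $b = gb'$, so that $(a',b') = 1$. Since the divisor function is submultiplicative, $\tau(an+b) = \tau(g(a'n+b')) \leq \tau(g)\,\tau(a'n+b')$, and therefore
$$\sum_{n \leq N} \tau(an+b) \leq \tau((a,b)) \sum_{n \leq N} \tau(a'n+b').$$
Thus it suffices to prove $\sum_{n \leq N} \tau(a'n+b') \ll N \log N$ with an \emph{absolute} implied constant, under the assumption $(a',b')=1$ and $a',b' \ll N^{O(1)}$.

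The next step is to check the hypotheses of Theorem \ref{erdos-bound} for $P(n) = a'n+b'$: this polynomial has degree $D=1$, non-negative integer coefficients of size $\ll N^{O(1)}$ (so the parameter $l$ is absolute), and we compute its root-counting function $\rho$. For a prime power $p^j$, if $p \nmid a'$ then $a'$ is invertible mod $p^j$ and there is exactly one root, so $\rho(p^j) = 1$; if $p \mid a'$ then coprimality forces $p \nmid b'$, hence $a'n+b' \not\equiv 0 \pmod p$ for all $n$ and $\rho(p^j) = 0$. In either case $\rho(p^j) \leq 1$, so condition \eqref{pj} holds with $C=1$. By the Chinese remainder theorem $\rho$ is multiplicative, and the computation above shows $\rho(m) = 1_{(m,a')=1}$, in particular $\rho(m) \leq 1$ for all $m$.

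Consequently $\sum_{m \leq N} \rho(m)/m \leq \sum_{m \leq N} 1/m \ll \log N$, and Theorem \ref{erdos-bound}, whose implied constant now depends only on the absolute quantities $D=1$, $C=1$, and the $O(1)$ value of $l$, yields $\sum_{n \leq N} \tau(a'n+b') \ll N \log N$ with an absolute constant. Combining this with the reduction of the first paragraph gives the corollary. There is no serious obstacle here; the only points requiring a little care are the reduction to the coprime case via submultiplicativity and the observation that the dependence of the constant in Theorem \ref{erdos-bound} on $(D,l,C)$ becomes an absolute constant once $D=C=1$ and $l=O(1)$ (the latter using the hypothesis $a,b \ll N^{O(1)}$).
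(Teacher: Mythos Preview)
Your proof is correct and follows exactly the same approach as the paper: factor out the gcd using the submultiplicativity $\tau(nm)\leq\tau(n)\tau(m)$, then apply Theorem~\ref{erdos-bound} to the coprime linear polynomial with the observation that $\rho(m)\leq 1$. The only difference is that you spell out the computation of $\rho(p^j)$ in more detail than the paper does.
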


\begin{proof}  By the elementary inequality $\tau(nm) \leq \tau(n) \tau(m)$ we may factor out $(a,b)$ and assume without loss of generality that $a,b$ are coprime. 

We apply Theorem \ref{erdos-bound} with $P(n) := an+b$.  From the coprimality of $a,b$ and elementary modular arithmetic, we see that $\rho(m) \leq 1$ for all $m$, and the claim follows.
\end{proof}

We may now prove Proposition \ref{kab} from the introduction.

\begin{proof}[Proof of Proposition \ref{kab}]
We divide into two cases, depending on whether $A \geq B$ or $A \leq B$.

First suppose that $A \geq B$.  From Corollary \ref{eb} we have
$$ \sum_{a \leq A} \tau(kab^2+1) \ll A \sum_{m \leq A} \frac{1}{m} \ll A \log A,$$
for each fixed $b \leq B$, and the claim follows on summing in $B$.  (Note that this argument in fact works whenever $A \geq B^\eps$ for any fixed $\eps>0$.)

Now suppose that $A \leq B$.  
For each fixed $a \in A$, we apply Theorem \ref{erdos-bound} to the polynomial $P_{ka}(b) := kab^2+1$.  To do this we first must obtain a bound on $\rho_{ka}(p^j)$, where $\rho_{ka}(m)$ is the number of solutions $b \mod m$ to $ka b^2+1=0 \mod m$.  Clearly $\rho_{ka}(m)$ vanishes whenever $m$ is not coprime to $ka$, so it suffices to consider $\rho_{ka}(p^j)$ when $p$ does not divide $ka$.  Then $P_{ka}$ is quadratic, and a simple application of Hensel's lemma reveals that $\rho_{ka}(p^j) \leq 2$ for all odd prime powers $p^j$ and $\rho_{ka}(p^j) \leq 4$ for $p=2$.  We may therefore apply Theorem \ref{erdos-bound} and conclude that
$$ \sum_{b \leq B} \tau(kab^2+1) \ll B \sum_{m \leq B} \frac{\rho_{ka}(m)}{m}.$$
It thus suffices to show that
\begin{equation}\label{ab}
 \sum_{a \leq A} \sum_{m \leq B} \frac{\rho_{ka}(m)}{m} \ll A \log B \log(1+k).
\end{equation}

To control $\rho_{ka}(m)$, the obvious tool to use here is the quadratic
reciprocity law \eqref{quadratic}.  To apply this law, it is of course
convenient to first reduce to the case when $a$ and $m$ are odd.    If $m = 2^j m'$ for some odd $m'$, then $\rho_{ka}(m) \ll \rho_{ka}(m')$, and from this it is easy to see that the bound \eqref{ab} follows from the same bound with $m$ restricted to be odd.  Similarly, by splitting $a = 2^l a'$ and absorbing the $2^l$ factor into $k$ (and dividing $A$ by $2^l$ to compensate), we may assume without loss of generality that $a$ is odd.

As previously observed, $\rho_{ka}(m)$ vanishes unless $ka$ and $m$ are coprime, so we may also restrict to the case $(ka,m)=1$, where $(n,m)$ denotes the greatest common divisor of $n,m$.  If $p$ is an odd prime not dividing $ka$, then from elementary manipulation and Hensel's lemma we see that
$$ \rho_{ka}(p^j) = \rho_{ka}(p) \leq 1 + \left( \frac{-ka}{p} \right),$$
and thus for odd $m$ coprime to $ka$ we have
$$ \rho_{ka}(m) \leq \prod_{p \mid m} \left(1 + \left( \frac{-ka}{p} \right)\right).$$
For odd $m$, not necessarily coprime to $ka$, we thus have
$$ \rho_{ka}(m) \leq \prod_{p \mid m; (p,2ka)=1} \left(1 + \left( \frac{-ka}{p} \right)\right).$$
using the multiplicativity properties of the Jacobi symbol, one has
$$ 1 + \left( \frac{-ka}{p} \right) \leq \sum_{j:p^j \mid m} \left( \frac{-ka}{p^j} \right)$$
whenever $p \mid m$ and $(p,2ka)=1$, and thus
$$ \rho_{ka}(m) \leq \prod_{p \mid m; (p,2ka)=1} \sum_{j:p^j \mid m} \left( \frac{-ka}{p^j} \right).$$
The right-hand side can be expanded as
$$ \sum_{q \mid m; (q,2ka)=1} \left( \frac{-ka}{q} \right).$$
We can thus bound the left-hand side of \eqref{ab} by
$$ \sum_{q \leq B: (q,2k)=1} \sum_{a \leq A; (a,2q)=1} \left( \frac{-ka}{q} \right)  
\sum_{m \leq B; q \mid m} \frac{1}{m}.$$
The final sum is of course $({\log \frac{B}{q}})/{q}+O({1}/{q})$.  The contribution of the error term is bounded by
$$ O( \sum_{q \leq B} \sum_{a \leq A} \frac{1}{q} ) = O( A \log B )$$
which is acceptable, so it suffices to show that
\begin{equation}\label{qo}
 \left|\sum_{q \leq B: (q,2k)=1} \sum_{a \leq A; (a,2q)=1} \left( \frac{-ka}{q} \right) \frac{\log \frac{B}{q}}{q}\right| \ll A \log B \log(1+k).
\end{equation}
We first dispose of an easy contribution, when $q$ is less than $A$.  The expression 
$$a \mapsto \left( \frac{-ka}{q} \right) 1_{(a,2q)=1}$$ 
is periodic with period $2q$ and sums to zero (being essentially a quadratic character on $\Z/2q\Z$), and so in this case we have
$$ \sum_{a \leq A; (a,2q)=1} \left( \frac{-ka}{q} \right) = O( q ).$$
One could obtain better estimates and deal with somewhat larger $q$ here by using tools such as the P\'olya-Vinogradov inequality, but we will not need to do so here; similarly for the treatment of the regime $A \leq q \leq kA$ below.  In any event, the contribution of the $q < A$ case is bounded by
$$ O\left( \sum_{q \leq A} q \frac{\log \frac{B}{q}}{q} \right) = O( A \log B )$$
which is acceptable.

Next, we deal with the contribution when $q$ is between $A$ and $kA$.  Here we crudely bound the Jacobi symbol in magnitude by $1$ and obtain a bound of
$$ O( \sum_{A \leq q \leq kA} \sum_{a \leq A} \frac{\log B}{q} ) = O( A \log B \log(1+k) )$$
which is acceptable.

Finally, we deal with the case when $q$ exceeds $kA$.  We write $k = 2^m k'$ where $k'$ is odd, then from quadratic reciprocity \eqref{quadratic} (and \eqref{quadratic-1}, \eqref{quadratic-2}) we have
$$ \left( \frac{-ka}{q} \right) = c(q) \left( \frac{q}{k'a} \right)$$
where $c(q) := (-1)^{(q-1)/2 + m(q^2-1)/8}$ is periodic with period $8$.  We can thus rewrite this contribution to \eqref{qo} as
$$ \left|\sum_{a \leq A; (a,2)=1} \sum_{kA \leq q \leq B: (q,2ak)=1} c(q) \left( \frac{q}{k'a} \right) \frac{\log\frac{B}{q}}{q}\right|.$$
For any fixed $a$ in the above sum, the expression 
$$q \mapsto c(q) \left( \frac{q}{k'a} \right) 1_{(q,2ak)=1}$$
 is periodic with period $8k'a = O(kA)$, is bounded in magnitude by $1$ and has mean zero.  A summation by parts then gives
$$ \left|\sum_{kA \leq q \leq B: (q,2ak)=1} c(q) \left( \frac{q}{k'a} \right) \frac{\log\frac{B}{q}}{q}\right| \ll \log B$$
and so on summing in $A$ we see that this contribution is acceptable.  This concludes the proof of the proposition.
\end{proof}

We now record some variants of Proposition \ref{kab} that will also be useful in our applications.

\begin{proposition}[Average value of $\tau_3(ab+1)$]\label{ab3}  For any $A, B > 1$, one has
\begin{equation}\label{abba}
 \sum_{a \leq A} \sum_{b \leq B} \tau_3(ab+1) \ll A B \log^2(A+B).
\end{equation}
\end{proposition}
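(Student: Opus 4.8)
The plan is to deduce this from the linear Erd\H{o}s-type estimate in Corollary~\ref{eb}, in the same spirit as the proof of Proposition~\ref{kab}. First I would use the pointwise bound
$$ \tau_3(n) \leq 3 \sum_{\substack{e \mid n \\ e \leq n^{1/3}}} \tau(n/e), $$
which holds because in any factorisation $n = e_1 e_2 e_3$ the smallest of the three factors is at most $n^{1/3}$: fixing which coordinate is smallest and calling its value $e$, the remaining two multiply to $n/e$, contributing at most $\tau(n/e)$. Applying this with $n = ab+1 \leq AB+1$ and interchanging the order of summation, it suffices to show that
$$ \sum_{e \leq (AB+1)^{1/3}} \ \sum_{a \leq A} \ \sum_{\substack{b \leq B \\ e \mid ab+1}} \tau\!\left(\frac{ab+1}{e}\right) \ll AB \log^2(A+B). $$

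By the symmetry of the double sum under interchanging $a$ and $b$, I may assume $A \geq B$, so that $e \leq (AB+1)^{1/3} \ll A^{2/3}$ throughout. Fix $e$ and fix $b \leq B$. If $(e,b) > 1$ there is no admissible $a$; otherwise $ab \equiv -1 \pmod e$ confines $a$ to a single residue class $a \equiv a_0 \pmod e$ with $a_0 \in \{1,\dots,e\}$, and as $a$ runs through this class in $[1,A]$ the quantity $(ab+1)/e$ runs through the progression $m_0, m_0+b, m_0+2b,\dots$ of length $O(A/e)$, where $m_0 := (a_0 b+1)/e$ is a positive integer with $(b,m_0)=1$ (any common factor of $b$ and $m_0$ divides $e m_0 = a_0 b + 1$ and hence divides $1$). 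Discarding the initial term $\tau(m_0)$ — which is negligible, since by the divisor bound it is $(AB)^{o(1)}$ and summing it over all $b \leq B$ and all $e \leq (AB+1)^{1/3}$ produces only $O(A^{2/3+o(1)}B) = o(AB)$ using $A \geq B$ — the remaining terms form $\sum_{1 \leq s \ll A/e} \tau(bs + m_0)$. Here $A/e \gg A^{1/3}$ while $b \leq A$ and $m_0 \leq B+1 \leq A+1$, so both coefficients are bounded by a fixed power of the length of summation, and Corollary~\ref{eb} applies to give $\sum_{1 \leq s \ll A/e} \tau(bs+m_0) \ll \tau((b,m_0))\,\tfrac{A}{e}\log\tfrac{A}{e} = \tfrac{A}{e}\log\tfrac{A}{e}$. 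Summing over $b \leq B$ bounds the inner double sum by $\ll \tfrac{AB}{e}\log A$, and then summing over $e \leq (AB+1)^{1/3}$ gives $\ll AB\log A \sum_{e \leq (AB+1)^{1/3}} \tfrac1e \ll AB\log A\log(AB) \ll AB\log^2(A+B)$, since $\log A$ and $\log(AB)$ are both $\ll \log(A+B)$ when $A \geq B$.

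The main obstacle is keeping Corollary~\ref{eb} applicable: that estimate requires the coefficients of the linear polynomial to be bounded by a fixed power of the summation length, so $e$ must not be allowed to be large compared with $\min(A,B)$. This is exactly why $\tau_3$ is decomposed through the \emph{smallest} factor of a triple factorisation (which forces $e \leq (AB+1)^{1/3}$) and why, after the case split $A \geq B$ versus $A \leq B$, the divisor function is summed along the progression in the \emph{larger} variable: with these two choices the progression always has length $\gg (\max(A,B))^{1/3}$, which dominates the coefficients $b$ and $m_0$. The coprimality $(b,m_0)=1$ is what removes the $\tau((a,b))$ factor of Corollary~\ref{eb}, and the single initial term of each progression is disposed of separately by the divisor bound.
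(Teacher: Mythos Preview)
Your proof is correct and follows essentially the same approach as the paper: both decompose $\tau_3(n)$ via its smallest factor as $\tau_3(n) \ll \sum_{d\mid n,\ d\leq n^{1/3}} \tau(n/d)$, fix the small divisor, parameterise the remaining variable along an arithmetic progression, and invoke Corollary~\ref{eb} after checking coprimality of the coefficients. The only cosmetic differences are that the paper takes $A\leq B$ and sums in $b$ (fixing $a$ and the divisor) whereas you take $A\geq B$ and sum in $a$ (fixing $b$ and the divisor), and that you explicitly dispose of the initial term $\tau(m_0)$ via the divisor bound while the paper simply absorbs it into the $\ll$ notation.
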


\begin{proof}  By symmetry we may assume that $A \leq B$, so that $ab \ll B^2$ for all $a \leq A$ and $b \leq B$.  For any $n$, $\tau_3$ is the number of ways to represent $n$ as the product $n=d_1 d_2 d_3$ of three terms.  One of these terms must be at most $n^{1/3}$, and so
$$ \tau_3(n) \ll \sum_{d \mid n: d \leq n^{1/3}} \tau(\frac{n}{d}).$$
We can thus bound the left-hand side of \eqref{abba} by
$$ \ll \sum_{d \ll B^{2/3}} \sum_{a \leq A} \sum_{b \leq B: d \mid ab+1} \tau(\frac{ab+1}{d}).$$
Note that for fixed $a,d$, the constraint $d \mid ab+1$ is only possible if $a$ is coprime to $d$, and restricts $b$ to some primitive residue class $q \mod d$ for some $q = q_{a,d}$ between $1$ and $d$.    Writing $b = cd+q$, we can thus bound the above expression by
$$ \ll \sum_{d \ll B^{2/3}} \sum_{a \leq A} \sum_{c \ll B/d} \tau( ac + r )$$
where $r = r_{a,d} := ({aq+1})/{d}$.  Note that $r$ is clearly coprime to $a$.  Thus by Corollary \ref{eb}, we may bound the preceding expression by
$$ \ll \sum_{d \ll B^{2/3}} \sum_{a \leq A} \frac{B}{d} \log B$$
which is $O(AB \log^2 B)$.  The claim follows.
\end{proof}

\begin{proposition}[Average value of $\tau(ab+cd)$]\label{abcd-prop}  For any $A,B,C,D > 1$, one has
\begin{equation}\label{abcd}
\sum_{\substack{a \leq A, b \leq B, c \leq C, d \leq D: \\ (a,b,c,d) = 1}} \tau(ab+cd) \ll ABCD \log(A+B+C+D).
\end{equation}
\end{proposition}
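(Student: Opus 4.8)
The plan is to prove the stronger bound obtained by discarding the coprimality constraint $(a,b,c,d)=1$; calling the resulting unrestricted sum $S_0:=\sum_{a\leq A}\sum_{b\leq B}\sum_{c\leq C}\sum_{d\leq D}\tau(ab+cd)$, this suffices since the sum on the left of \eqref{abcd} is at most $S_0$. Using the symmetries $a\leftrightarrow b$, $c\leftrightarrow d$ and $(a,b)\leftrightarrow(c,d)$ of the summand (and of $A+B+C+D$), I would first relabel so that $B=\max(A,B,C,D)$; thus $A,C,D\leq B$, and we may assume $B\geq2$ (if $B<2$ all four variables equal $1$ and both sides of \eqref{abcd} are $O(1)$).

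The decisive step is to extract the common part of $a$ and $c$ before applying any divisor estimate. Writing $g=(a,c)$, $a=g\alpha$, $c=g\gamma$ with $(\alpha,\gamma)=1$, and using $\tau(mn)\leq\tau(m)\tau(n)$,
\[ S_0\ \leq\ \sum_{g\geq1}\tau(g)\sum_{\substack{\alpha\leq A/g,\ \gamma\leq C/g\\ (\alpha,\gamma)=1}}\ \sum_{b\leq B}\ \sum_{d\leq D}\tau(\alpha b+\gamma d). \]
For fixed $g,\gamma,d$ I would bound the $b$-sum by Corollary \ref{eb} applied to the linear polynomial $\alpha b+\gamma d$ in the variable $b$: here $\alpha\leq B$ and $\gamma d\leq CD\leq B^2$, so the coefficients are polynomially bounded in $B$, and the implied constant in Corollary \ref{eb} is then absolute (it depends only on the degree $1$, on the coefficients being polynomially bounded, and on the trivial bound $\rho(m)\leq1$ for a linear form with coprime coefficients). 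This gives $\sum_{b\leq B}\tau(\alpha b+\gamma d)\ll\tau\big((\alpha,\gamma d)\big)B\log B$, and here I would use $(\alpha,\gamma)=1$ to replace $(\alpha,\gamma d)$ by $(\alpha,d)$.

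It remains to sum $\tau((\alpha,d))B\log B$ over $d\leq D$, over $\alpha\leq A/g$ (after which the constraint $(\alpha,\gamma)=1$ may be dropped, only enlarging the sum), and over $\gamma\leq C/g$; the $\gamma$-sum contributes a factor $C/g$, leaving the double sum $\sum_{\alpha\leq A/g}\sum_{d\leq D}\tau((\alpha,d))$. The crucial point — and what makes this route succeed — is that this double sum carries no logarithmic loss: writing $\tau((\alpha,d))=\sum_{e\mid\alpha,\ e\mid d}1$ and interchanging the order of summation, it equals $\sum_{e\geq1}\lfloor A/(ge)\rfloor\lfloor D/e\rfloor\leq\zeta(2)(A/g)D$. (By contrast, the obvious approach — fixing $c,d$ and applying Corollary \ref{eb} to the pair $(a,b)$ — loses a factor $\log\log(A+B+C+D)$, since $\sum_{a\leq A}\tau((a,cd))\asymp A\log\log(cd)$; extracting $g=(a,c)$ first is exactly what repairs this.) Putting the pieces together, the inner quadruple sum is $\ll ABCD\,g^{-2}\log B$, hence $S_0\ll ABCD\log B\sum_{g\geq1}\tau(g)/g^2\ll ABCD\log B\leq ABCD\log(A+B+C+D)$, using $\sum_{g\geq1}\tau(g)/g^2=\zeta(2)^2<\infty$. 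I do not expect a genuine obstacle beyond bookkeeping; the two points requiring care are the uniformity (absoluteness of the implied constant) in Corollary \ref{eb} and the initial normalization, so that Corollary \ref{eb} is applied in the variable that ranges over the largest of the four boxes.
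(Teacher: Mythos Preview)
Your proof is correct. The paper's route is shorter and is essentially the ``obvious approach'' you dismissed: by symmetry take $D=\max(A,B,C,D)$, and for each fixed $a,b,c$ apply Corollary~\ref{eb} to the $d$-sum to obtain $\sum_{d\leq D}\tau(cd+ab)\ll\tau\big((c,ab)\big)\,D\log D$; then sum over $a,b,c$. (The paper actually asserts this bound without the $\tau((c,ab))$ factor, which is not literally true --- e.g.\ $a=2$, $b=3$, $c=6$ --- but the factor is harmless upon summation.)

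Your parenthetical claim that this direct approach loses a $\log\log$ factor is not right, and so the extraction of $g=(a,c)$ is unnecessary. It is true that for \emph{fixed} $c,d$ one has $\sum_{a\leq A}\tau((a,cd))\leq A\,\sigma_{-1}(cd)$ with $\sigma_{-1}(n)=\sum_{e\mid n}e^{-1}$, and that $\sigma_{-1}(n)$ can reach $\asymp\log\log n$. But this is only a pointwise loss; on average it vanishes, since the same interchange you used gives
\[
\sum_{a\leq A}\sum_{c\leq C}\sum_{d\leq D}\tau\big((a,cd)\big)
=\sum_{e\geq1}\Big\lfloor\frac{A}{e}\Big\rfloor\cdot\#\{(c,d):e\mid cd\}
\leq ACD\sum_{e\geq1}\frac{\tau(e)}{e^2}=\zeta(2)^2\,ACD,
\]
using that $\#\{(c,d):c\leq C,\,d\leq D,\,e\mid cd\}\leq CD\,\tau(e)/e$ (for each $c$ the divisibility forces $\frac{e}{(c,e)}\mid d$, and $\sum_{c\leq C}(c,e)\leq C\tau(e)$). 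So the paper's approach also lands at $ABCD\log(A+B+C+D)$ with an absolute constant; what your detour through $g=(a,c)$ buys is a cleaner \emph{pointwise} bound after the inner sum, at the cost of an extra layer of decomposition.
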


\begin{proof} By symmetry we may assume that $A,B,C \leq D$.  Then for fixed $a,b,c$ coprime, we have
$$ \sum_{d \leq D} \tau(ab+cd) \ll D \log D$$
by Corollary \ref{eb}, and the claim follows by summing in $a,b,c,d$.
\end{proof}

\begin{remark}  Informally, one can view the above propositions as asserting 
that the heuristics $\tau(n) \ll \log n$, $\tau_3(n) \ll \log^2 n$ are valid 
on average (in a first moment sense) on the range of various polynomial forms 
in several variables.  A result similar to Proposition \ref{abcd-prop} 
was established in \cite[Lemma 3]{heath}, but with the coprimality condition 
$(a,b,c,d)=1$ replaced by $(ab,cd)=1$, and also the divisor function 
$\tau$ being restricted by forcing one of the divisors 
to live in a given dyadic range, with the logarithm being removed as a 
consequence.  
Also, products of three factors were permitted instead of the terms $ab, cd$.  As remarked after \cite[Lemma 4]{heath}, the logarithmic term in \eqref{abcd} is necessary.
\end{remark}

\section{Upper bound for $\sum_{n \leq N} f_\I(n)$ and $\sum_{p \leq N} f_\I(p)$}

Now that we have established Proposition \ref{kab}, we can obtain upper bounds on sums of $f_\I$.

We begin with the bound
$$ \sum_{n \leq N} f_\I(n) \ll N \log^3 N.$$
By Proposition \ref{type-1} and symmetry followed by Lemma \ref{size}, it suffices to show that there are at most $O(N \log^3 N)$ septuples $(a,b,c,d,e,f,n) \in \N^7$ obeying \eqref{I-1}-\eqref{I-9} and the Type I estimates from Lemma \ref{size}.  In particular, $acd \ll N$, $f$ is a factor of $4a^2 d+1$, and $n = 4acd-f$.  As $a,c,d,f$ determine the remaining components of the septuple, we may thus bound the number of such septuples as
$$ \sum_{a,c,d: acd \ll N} \tau(4a^2 d + 1).$$
Dividing $a,c,d$ into dyadic blocks ($A/2 \leq a \leq A$, etc.) and applying Proposition \ref{kab} (with $k=4$) to each block, we obtain the desired bound $O( N \log^3 N)$.

Now we establish the bound
$$ \sum_{p \leq N} f_\I(p) \ll N \log^2 N \log \log N.$$
As before, it suffices to count quadruples $(a,c,d,f)$ with $acd \ll N$, and $f$ a factor of $4a^2d+1$; but now we can restrict $p = 4acd-f$ to be prime.  Also, from Proposition \ref{type-1} we may assume that $p$ is coprime to $acd$ (and hence to $4acd$, if we discard the prime $p=2$).  

Thus we may assume without loss of generality that $-f \mod 4ad$ is a primitive residue class.  From the Brun-Titchmarsh inequality \eqref{brun}, we conclude that for each fixed $a,d,f$, there are $O( {N}/({\phi(4ad) \log(N/4ad)}) )$ primes $p$ in this residue class that are less than $N$ if $ad \leq N/100$ (say); if instead $ad > N/100$, then we of course only have $O(1) = O( {N}/{\phi(4ad)})$ primes in this class.  Thus, in any event, we can bound the number of such primes as $O( {N}/({\phi(4ad) \log(2 + N/ad)}) )$.  We therefore have the bound
\begin{equation}\label{logo}
\sum_{p \leq N} f_\I(p) \ll \sum_{a,d: ad \ll N} \tau(4a^2 d+1) \frac{N}{\phi(4ad) \log(2 + N/ad)}.
\end{equation}
By dyadic decomposition (and bounding $\phi(4ad) \geq \phi(ad)$), it thus suffices to show that
\begin{equation}\label{phoad}
 \sum_{a,d: N/2 \leq ad \leq N} \frac{\tau(4a^2 d + 1)}{\phi(ad)} \ll \log^2 N.
\end{equation}
Indeed, assuming this bound for all $N$, we can bound the right-hand side of \eqref{logo} by
$$ \sum_{j=1}^{O(\log N)} \frac{N \log^2 N}{j} \ll N \log^2 N \log \log N$$
and the claim follows.

To prove \eqref{phoad}, we would like to again apply Proposition \ref{kab}, but we must first deal with the $\phi(ad)$ denominator.  From \eqref{phiq-bound} one has
$$ \frac{1}{\phi(ad)} \ll \frac{1}{ad} \sum_{s \mid a} \sum_{t \mid d} \frac{1}{st}.$$
Writing $a = sa'$, $d = td'$, we may thus bound the left-hand side of \eqref{phoad} by
$$
\ll \frac{1}{N} \sum_{s,t: st \leq N} \frac{1}{st} \sum_{a',d': a'd' \leq N/st} \tau(4s^2 t (a')^2 d' + 1).$$
Applying Proposition \ref{kab} to the inner sum (decomposed into dyadic blocks, and setting $k = 4s^2 t$), we see that
$$ \sum_{a',d': a'd' \leq N/st} \tau(4s^2 t (a')^2 d' + 1) \ll \frac{N}{st} \log^2 \frac{N}{st} \log(1+s^2 t).$$
Inserting this bound and summing in $s,t$ we obtain the claim.

\section{Upper bound for $\sum_{n \leq N} f_\II(n)$ and $\sum_{p \leq N} f_\II(p)$}

Now we prove the upper bound 
$$ \sum_{n \leq N} f_\II(n) \ll N \log^3 N.$$
By Proposition \ref{type-2} followed by Lemma \ref{size} (and symmetry), it
suffices to show that there are at most $O(N \log^3 N)$ $\N$-points
$(a,b,c,d,e,f)$ that lie in $\Sigma_n^\II$ for some $n \leq N$, which also
obeys the Type II bound $acde\leq N$ in Lemma \ref{size}. 

Observe from \eqref{II-1}-\eqref{II-9} that $a,c,d,e$ determine the other
variables $b,f,n$.  Thus, it suffices to show that there are $O(N \log^3 N)$
quadruples $(a,b,d,e) \in \N^4$ with $acde \leq N$.  But this follows from \eqref{tau-k} with $k=4$.

Finally, we prove the upper bound
$$ \sum_{p \leq N} f_\II(p) \ll N \log^2 N.$$
By dyadic decomposition, it suffices to show that
\begin{equation}\label{ppn}
 \sum_{N/2 \leq p \leq N} f_\II(p) \ll N \log^2 N.
\end{equation}
As before, we can bound the left-hand side (up to constants) by the number of quadruples $(a,c,d,e) \in \N^4$ with $acde \ll N$.  However, by \eqref{II-4}, we may also add the restriction that $4acde-4a^2 d - e$ is a prime between $N/2$ and $N$.   Also, if we set $b := ce-a$, then by Lemma \ref{size} we may also add the restrictions $a \leq b$ and $b < ce$, and from Proposition \ref{type-2} we can also require that $a,b$ be coprime.  Since
\begin{align*}
(ade) (acd) (ab)^{1/2} &\ll (ade) (acd) b\\ 
&\ll (ade) (acd) (ce) \\
&= (acde)^2 \\
&\ll N^2
\end{align*}
we see that one of the quantities $ade, acd, ab$ must be at most $O(N^{4/5})$ (cf. Section \ref{upper-sec}).  As we shall soon see, the ability to take one of these quantities to be significantly less than $N$ allows us to avoid the inefficiencies in the Brun-Titchmarsh inequality \eqref{brun} that led to a double logarithmic loss in the Type I case.  (Unfortunately, it does not seem that a similar trick is available in the Type II case.)

Let us first consider those quadruples with $ade \ll N^{4/5}$, which is the
easiest case.  For fixed $a,d,e$, $4acde-4a^2d-e$ traverses (a possibly
non-primitive) residue class modulo $4ade$.  As $ade \ll N^{4/5}$, there are no
primes in this class that are at least $N/2$ if the class is not primitive.  If
it is primitive, we may apply the Brun-Titchmarsh inequality \eqref{brun} to
bound the number of primes between $N/2$ and $N$ in this class by 
$O(\frac{N}{\phi(4ade) \log(N)})$, noting that $\log(N/4ade)$ is comparable to $\log N$.  Thus, we can bound this contribution to the left-hand side of \eqref{ppn} by
$$ \ll \frac{N}{\log N} \sum_{a,d,e: ade \ll N^{4/5}} \frac{1}{\phi(4acd)};$$
setting $m:= ade$ and bounding $\phi(4ade) \geq \phi(ade)$, we can bound this in turn by
$$
 \ll \frac{N}{\log N} \sum_{m \ll N^{4/5}} \frac{\tau_3(m)}{\phi(m)}
$$
where $\tau_3(m) := \sum_{a,d,e: ade=m} 1$.  Applying Lemma \ref{upper-crude}, we have
\begin{equation}\label{easy}
\sum_{m \ll N^{4/5}} \frac{\tau_3(m)}{\phi(m)} \ll \log^3 N,
\end{equation}
and so this contribution is acceptable.

Now we consider the case $acd \ll N^{4/5}$.  Here, we rewrite $4acde-4a^2d-e$ as $(4acd-1) e - 4a^2 d$, which then traverses a (possibly non-primitive) residue class modulo $4acd-1$.  Applying the Brun-Titchmarsh inequality as before, we may bound this contribution by
$$ \ll \frac{N}{\log N} \sum_{a,c,d: acd \ll N^{4/5}} \frac{1}{\phi(4acd-1)}$$
and hence (setting $m := 4acd-1$) by
$$ \ll \frac{N}{\log N} \sum_{m \ll N^{4/5}} \frac{\tau_3(m+1)}{\phi(m)},$$
so that it suffices to establish the bound
\begin{equation}\label{hard}
 \sum_{m \ll N^{4/5}} \frac{\tau_3(m+1)}{\phi(m)} \ll \log^3 N.
 \end{equation}
This is superficially similar to \eqref{easy}, but this time the summand is not multiplicative in $m$, and we can no longer directly apply Lemma \ref{upper-crude}.  To deal with this, we apply \eqref{phiq-bound} and bound \eqref{hard} by
$$ \ll \sum_{m \ll N^{4/5}} \sum_{d \mid m} \frac{\tau_3(m+1)}{dm};$$
writing $m = dn$, we can rearrange this as
$$ \ll \sum_{d \ll N^{4/5}} \frac{1}{d^2} \sum_{n \ll N^{4/5}/d} \frac{\tau_3(dn+1)}{n}.$$
Applying dyadic decomposition of the $d,n$ variables and using Proposition \ref{ab3}, we obtain \eqref{hard} as required.

Finally, we consider the case $ab \ll N^{4/5}$.  Here, we rewrite $4acde-4a^2d-e$ as $4abd - e$, and note that $e$ divides $a+b=ce$.  
If we fix $a,b$, there are thus at most $\tau(a+b)$ choices for $e$ (which also fixes $c$), and once one fixes such a choice, $4abd-e$ traverses a (possibly non-primitive) residue class modulo $4ab$.  Applying the Brun-Titchmarsh inequality again, we may bound this contribution by
$$ \ll \frac{N}{\log N} \sum_{a,b: ab \ll N^{4/5}; (a,b)=1} \frac{\tau(a+b)}{\phi(4ab)}.$$
Bounding $\phi(4ab) \geq \phi(ab)$ and using \eqref{phiq-bound}, we can bound this by
$$ \ll \frac{N}{\log N} \sum_{a,b: ab \ll N^{4/5}; (a,b)=1} \sum_{k \mid a} \sum_{l \mid b} \frac{\tau(a+b)}{abkl}.$$
Writing $a = km$, $b=ln$, we may bound this by
$$ \ll \frac{N}{\log N} \sum_{\substack{k,l,m,n: klmn \ll N^{4/5};\\ (k,l,m,n) = 1}} \frac{1}{k^2l^2mn} \tau(km+ln).$$
Dyadically decomposing in $k,l,m,n$ and using Proposition \ref{abcd-prop}, we see that this contribution is also $O(N \log^2 N)$.  The proof of \eqref{ppn} (and thus Theorem \ref{main}) is now complete.

\section{Solutions by polynomials}\label{solution}

We now prove Proposition \ref{res-class}.  We first verify that each of the sets is solvable by polynomials (which of course implies that any residue class contained in such classes are also solvable by polynomials). We first do this for the Type I sets.  In view of the $\pi^\I_n$ map (which clearly preserves polynomiality), it will suffice to find polynomials $a=a(n),\ldots,f=f(n)$ of $n$ that take values in $\N$ for sufficiently large $n$ in these sets, and such that $(a(n),\ldots,f(n)) \in \Sigma^\I_n$ for all $n$.  This is achieved as follows:

\begin{itemize}
\item If $n=-f \mod 4ad$, where $a,d,f \in \N$ are such that $f \mid 4a^2 d+1$, then we take 
$$ (a,b,c,d,e,f) := \left(a, \frac{n+f}{4ad}e - a, \frac{n+f}{4ad}, d, e,\frac{4a^2d+1}{e}\right).$$
\item If $n = -f \mod 4ac$ and $n = -{c}/{a} \mod f$, where $a,c,f \in \N$ are such that $(4ac,f)=1$, then we take
$$ (a,b,c,d,e,f) := \left(a, \frac{na+c}{f}, c, \frac{n+f}{4ac}, \frac{na+af+c}{fc}, f\right);$$
note from the hypotheses that $na+af+c$ is divisible by the coprime moduli $f$
and $c$, and is thus also divisible by $fc$.
\item If $n = -f \mod 4cd$ and $n^2 = -4c^2d \mod f$, where $c,d,f,q \in \N$ are such that $(4cd,f)=1$, then we take
$$ (a,b,c,d,e,f) := \left(\frac{n+f}{4cd}, \frac{n^2+4c^2d+nf}{4cdf}, c, d, \frac{(n+f)^2+4c^2d}{4c^2df}, f\right);$$
note from the hypotheses that $(n+f)^2+4c^2d$ is divisible by the coprime moduli $4c^2d$ and $f$, and is thus also divisible by $4c^2df$.
\item If $n = -{1}/{e} \mod 4ab$, where $a,b,e \in \N$ are such that $e \mid a+b$ and $(e,4ab)=1$, then we take
$$ (a,b,c,d,e,f) := \left(a,b, \frac{a+b}{e}, \frac{ne+1}{4ab}, e, 4a\frac{a+b}{e} \frac{ne+1}{4ab}-n\right)$$
\end{itemize}
One easily verifies in each of these cases that one has an $\N$-point of $\Sigma^\I_n$ for $n$ large enough.

Now we turn to the Type II case.  We use the same arguments as before, but using $\Sigma^\II_n$ in place of $\Sigma^\I_n$ of course:

\begin{itemize}
\item If $n=-e \mod 4ab$, where $a,b,e \in \N$ are such that $e \mid a+b$ and $(e,4ab)=1$, then we take
$$ (a,b,c,d,e,f) := \left(a,b,\frac{a+b}{e},\frac{n+e}{4ab}, e, \frac{a+b}{e} \frac{n+e}{b} - 1\right).$$
\item If $n=-4a^2d \mod f$, where $a,d,f \in \N$ are such that $4ad \mid f+1$, then we take
$$ (a,b,c,d,e,f) := \left(a,\frac{f+1}{4ad} \frac{n+4a^2d}{f}-a,\frac{f+1}{4ad},d,\frac{n+4a^2d}{f}, f\right).$$
\item If $n=-4a^2d-e \mod 4ade$, where $a,d,e \in \N$ are such that $(4ad,e)=1$, then we take
$$ (a,b,c,d,e,f) := \left(a,\frac{n+e}{4ad},\frac{n+4a^2 d + e}{4ade},d,e,\frac{n+4a^2d}{e}\right).$$
\end{itemize}
Again, one easily verifies in each of these cases that one has an $\N$-point of $\Sigma^\II_n$ for $n$ large enough.

Now we establish the converse claim.  Suppose first that we have a primitive residue class $q \mod r$ that can be solved by polynomials, then we have
$$ \frac{4}{p} = \frac{1}{x} + \frac{1}{y} + \frac{1}{z}$$
for all sufficiently large primes $p$ in this class, where $x=x(p), y=y(p), z=z(p)$ are polynomials of $p$ that take natural number values for all large $p$ in this class.  Note that (depending on whether the constant coefficient of $x(p)$ is nonzero or not) one either has $p|x(p)$ for all $p$, or $p \not | x(p)$ for all sufficiently large $p$. Similarly for $y(p)$ and $z(p)$.  Thus, after permuting, we may assume that we are either in the Type I case where $p|x(p)$ and $p \not|y(p),z(p)$ for all sufficiently large $p$ in the class, or else in the Type II case where $p\not | x(p)$ and $p | y(p),z(p)$ for all sufficiently large $p$ in the class. 

Suppose first we are in the Type I case.  For all sufficiently large $p$, we either have $y(p) \leq z(p)$ for all $p$, or $y(p) \geq z(p)$ for all $p$; by symmetry we may assume the latter.

Applying Proposition \ref{type-1}, we see that
$$ (x,y,z) = (abdp, acd, bcd)$$
for some $\N$-point $(a,\ldots,f) = (a(p),\ldots,f(p))$ in $\Sigma^\I_p$ with $a(p), b(p), c(p)$ having no common factor.  In particular, $d=d(p)$ is the greatest common divisor of $x(p), y(p), z(p)$.  On the other hand, if we let $\tilde d \in \Q[t]$ be the monic greatest common divisor of $x,y,z$, then there are natural numbers $C_1,C_2$ such that $C_1 \tilde d(p)$ is always an integer and $C_1 \tilde d(p)$ divides $C_2 x(p), C_2 y(p), C_2 z(p)$, and by the Euclidean algorithm we know that there is also a natural number $C_3$ such that $C_1 C_3 \tilde d(p)$ is an integer combination of $x(p), y(p), z(p)$.  From this we see that $d(p)$ is of the form $q(p) \tilde d(p)$ where $q(p)$ is a rational number that takes on only finitely many values as $p$ varies.  For any given rational $q$, the question of whether $q \tilde d(p)$ is an integer, and whether $q \tilde d(p)$ divides $x(p),y(p),z(p)$, can be determined in terms of finitely many residue classes $p \operatorname{ mod } r$ of $p$ (note that $\frac{x(p)}{q\tilde d(p)}, \frac{y(p)}{q\tilde d(p)}, \frac{z(p)}{q\tilde d(p)}$ are polynomials in $p$ with rational coefficients).  Thus, one can partition the original residue class of $p$ into finitely many subclasses, such that on each such class, $q(p)=q$ is independent of $p$.  We now pass to an arbitrary such subclass (eliminating the non-primitive classes, as these only contain at most one prime), so that $d(p)$ is now a polynomial function of $p$.  Dividing out by $d$ and repeating these arguments, we conclude (after passing to further subclasses if necessary) that $a = a(p)$, $b = b(p)$, and $c = c(p)$ are also polynomials in $p$ for sufficiently large $p$ in the subclass.  Applying the identities \eqref{I-1}-\eqref{I-9} we also see that $e = e(p)$ and $f=f(p)$ are polynomials in $p$ for sufficiently large $p$.  It will then suffice to show that all subclasses obtained in this fashion lie in a residue class from one of the Type I families in Proposition \ref{res-class}.

From Lemma \ref{size} we have $a(p) c(p) d(p) = O(p)$ and $f(p) = O(p)$ for all $p$, which implies that at least two of the polynomials $a(p), c(p), d(p)$ must be constant in $p$, and that $f(p)$ has degree at most $1$ in $p$.  We now divide into several cases.

First suppose that $a, d$ are independent of $p$.  By \eqref{I-7} this forces $e, f$ to be independent of $p$ as well, and $f$ divides $4a^2d+1$.  By \eqref{I-6} we have
$$ p = -f \mod 4ad$$
for all sufficiently large primes $p$ in the given subclass, and the claim follows in this case.

Now suppose that $a, c$ are independent of $p$, and $f$ has degree $0$ (i.e. is also independent of $p$).  Then from \eqref{I-6} we have $p = -f \mod 4ac$, and from \eqref{I-8} we have $p = -{c}/{a} \mod f$; since $p$ is a large prime this also forces $(4ac,f)=1$, and the claim follows.

Now suppose instead that $a, c$ are independent of $p$, and $f$ has degree $1$ (and thus grows linearly in $p$).  By Lemma \ref{size}, $b, e$ are then bounded and thus constant in $p$.  From \eqref{I-2} we have $e \mid a+b$, and from \eqref{I-1} we have $p = -{1}/{e} \mod 4ab$.  As $p$ is an arbitrarily large prime, this forces $(4ab,e)=1$, and the claim follows.
 
Next, suppose that $c, d$ are independent of $p$, and $f$ has degree $0$.  Then from \eqref{I-6} one has $p = -f \mod 4cd$, which in particular forces $(4cd,f)=1$.  From \eqref{I-9} one has $p^2 = -4c^2 d \mod f$, and the claim follows.

Finally, suppose that $c, d$ are independent of $p$, and $f$ has degree $1$.  By \eqref{I-9}, $f(p)$ divides $p^2 + 4c^2 d$ for all large primes $p$ in the primitive residue subclass.  Applying the Euclidean algorithm, we conclude that $f$ in fact divides $p^2+4c^2d$ \emph{as a polynomial} in $p$.  But as $c,d$ are positive, $p^2+4c^2 d$ is irreducible over the reals, a contradiction.  This concludes the treatment of the Type I case.

Now suppose we are in the Type II case.  Arguing as in the Type I case, we obtain an $\N$-point $(a,\ldots,f) = (a(p),\ldots,f(p))$ in $\Sigma^\II_p$ for all sufficiently large primes $p$ in this class, and obeying the bounds in Lemma \ref{size}, and after partitioning the set of such large primes $p$ into a finite number of primitive subclasses, one has $a(p),\ldots,f(p)$ all depending in a polynomial fashion on $p$ in each subclass.  

We now work with an individual subclass and show that all sufficiently large primes $p$ in this subclass lie in a residue class in one of the Type II families in Proposition \ref{res-class}. 
From Lemma \ref{size} we have $a(p) c(p) d(p) e(p) = O(p)$, and so three of these polynomials $a(p), c(p), d(p), e(p)$ must be independent of $p$.

Suppose first that $a, c, e$ are independent of $p$.  By \eqref{I-2}, $b$ is independent of $p$ also, and $e \mid a+b$.  By \eqref{II-1}, $p = -e \mod 4ab$, and thus $(e,4ab)=1$, and the claim then follows from Dirichlet's theorem.

Now suppose that $a, c, d$ are independent of $p$.  By \eqref{II-6}, $f$ is independent of $p$ also, and $4ad \mid f+1$.    From \eqref{II-7} one has $p=-4a^2 d \mod f$, and the claim follows.

Next, suppose $a, d, e$ are independent of $p$.  By \eqref{II-4} one has $p = -4a^2d -e \mod 4ade$, which implies $(4ad,e)=1$, and the claim follows.

Finally, suppose $c,d,e$ are independent of $p$.  By \eqref{II-2} this forces $a,b$ to be bounded, and hence also independent of $p$; and so this case is subsumed by the preceding cases.

\section{Lower bounds III}\label{lower-3}

\subsection{Generation of solutions}

We begin the proof of Theorem \ref{many-solutions}; the method of proof will be a generalisation of that in Section \ref{lower-sec}.  For the rest of this section, $m$ and $k$ are fixed, and all implied constants in asymptotic notation are allowed to depend on $m,k$.  We assume that $N$ is sufficiently large depending on $m,k$.

In the $m=4,k=3$ case, Type II solutions were generated by the ansatz
$$ (t_1,t_2,t_3) = (abd, acdn, bcdn)$$
for various quadruples $(a,b,c,d)$ (or equivalently, quadruples $(a,c,d,e)$, setting $b := ce-a$); see \eqref{pi2-n}.  We will use a generalisation of this ansatz for higher $k$; for instance, when $k=4$ we will construct solutions of the form
$$ (t_1,t_2,t_3,t_4) = (b x_{12} x_{123} x_{124} x_{1234}, x_{12} x_{23} x_{24} x_{123} x_{124} x_{234} x_{1234} n, b x_{23} x_{123} x_{234} x_{1234} n, b x_{24} x_{124} x_{234} x_{1234} n)$$
for various octuples $(b,x_{12}, x_{23}, x_{24}, x_{123}, x_{124}, x_{234}, x_{1234})$, or equivalently, using octuples 
$$(x_{12}, x_{23}, x_{24}, x_{123}, x_{124}, x_{234}, x_{1234}, e),$$
and setting
$$ b = e x_{23} x_{24} x_{234} - x_{12} x_{24} x_{124} - x_{12} x_{23} x_{123}.$$
More generally, we will generate Type II solutions via the following lemma.

\begin{lemma}[Generation of Type II solutions]  Let ${\mathcal P}$ denote the set $2^{k-1}-1$-element set
$$ {\mathcal P} := \{ I \subset \{1,\ldots,k\}: 2 \in I; I \neq \{2\} \}.$$
Let $(x_I)_{I \in {\mathcal P}}$ be a tuple of natural numbers, and let $e$ be another natural number, obeying the inequalities
\begin{align}
\frac{1}{2m} N \leq e \prod_{I \in {\mathcal P}} x_I &\leq \frac{1}{m} N\label{eprod} 
\end{align}
and
\begin{equation}\label{xi-bound}
1 <  x_I \leq N^{1/2^{k+2}}
\end{equation}
whenever $I \in {\mathcal P}$.  Suppose also that the quantity
\begin{equation}\label{square}
 w := \prod_{I \in {\mathcal P}: I \neq \{1,2\}} x_I
\end{equation}
is square-free.  Set
\begin{align}
 b &:= e \prod_{I \in {\mathcal P}: 1 \not \in I} x_I - \sum_{j=3}^k \prod_{I \in {\mathcal P}: j \not \in I} x_I\label{x13}\\
 t_1 &:= b \prod_{I \in {\mathcal P}: 1 \in I} x_I \label{t1-def} \\
 n &:= m t_1 - e \label{ndef} \\
 t_2 &:= n \prod_{I \in {\mathcal P}} x_I \label{t2-def}
\end{align}
and
\begin{equation}\label{tj-def}
t_j := b\, n \prod_{I \in {\mathcal P}: j \in I} x_I.
\end{equation}
Then $n$ is a natural number with $n \leq N$, and $(t_1,\ldots,t_k)$ is a Type II solution for this value of $n$.  Furthermore, each choice of $(x_I)_{I \in {\mathcal P}}$ and $e$ generates a distinct Type II solution.
\end{lemma}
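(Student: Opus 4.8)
The plan is to check four things in turn: that $n\in\N$ and $n\le N$; that $(t_1,\dots,t_k)$ is a Type II solution for this $n$; and that the assignment $\big((x_I)_{I\in\mathcal P},e\big)\mapsto(t_1,\dots,t_k)$ is injective.

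The algebraic core is easy. The divisibility $n\mid t_j$ for $2\le j\le k$ is immediate from \eqref{t2-def} and \eqref{tj-def}, so it remains to verify $\tfrac mn=\sum_{j=1}^k\tfrac1{t_j}$. By \eqref{ndef} we have $\tfrac mn-\tfrac1{t_1}=\tfrac{mt_1-n}{nt_1}=\tfrac e{nt_1}$, so (once we know all the quantities involved are positive) it suffices to check $\tfrac e{nt_1}=\sum_{j=2}^k\tfrac1{t_j}$; substituting \eqref{t1-def}, \eqref{t2-def}, \eqref{tj-def} and clearing the common factors $n$, $b$ and $\prod_{I\in\mathcal P}x_I$, this identity reduces to exactly $e\prod_{I\in\mathcal P:\,1\notin I}x_I=b+\sum_{j=3}^k\prod_{I\in\mathcal P:\,j\notin I}x_I$, which is \eqref{x13}. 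For the size bounds I would first note that \eqref{xi-bound} and $|\mathcal P|=2^{k-1}-1$ give $\prod_{I\in\mathcal P}x_I<N^{(2^{k-1}-1)/2^{k+2}}<N^{1/8}$, while $\prod_{I\in\mathcal P}x_I\ge 2^{\,2^{k-1}-1}\ge 8$. Combined with \eqref{eprod} these force $e\gg_{m,k}N^{7/8}$, so the $k-2$ subtracted products in \eqref{x13} (each at most $\prod_{I\in\mathcal P}x_I<N^{1/8}$) are negligible next to $e$; hence $b>0$, and $0<b\le e\prod_{I\in\mathcal P:\,1\notin I}x_I$, so that $n<mt_1\le me\prod_{I\in\mathcal P}x_I\le N$ by \eqref{eprod}. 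For the lower bound I would expand $n=mt_1-e=e\big(m\prod_{I\in\mathcal P}x_I-1\big)-m\big(\prod_{I\in\mathcal P:\,1\in I}x_I\big)\sum_{j=3}^k\prod_{I\in\mathcal P:\,j\notin I}x_I$; here the first term is $\ge N/4$ (using \eqref{eprod} and $m\ge4$) and the second is $O_{m,k}(N^{1/4})$, so $n\ge1$ for $N$ large.

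The substantive step is injectivity. Given the solution, $n$ is recovered as $m/\sum_{j}t_j^{-1}$ and then $e=mt_1-n$ by \eqref{ndef}, so the task is to recover the $x_I$. The key device is the weighting $a_j:=2^{\,j-3}$ for $3\le j\le k$, with $s:=\sum_{j=3}^k a_j=2^{k-2}-1$, and the positive integer
$$\mu:=\Big(\frac{t_2}{n}\Big)^{s}\,t_1^{-s}\prod_{j=3}^k\Big(\frac{t_j}{n}\Big)^{a_j}.$$
Substituting the definitions, the powers of $b$ cancel (net exponent $-s+\sum_j a_j=0$) and so do the powers of $x_{\{1,2\}}$ (net exponent $s-s=0$), leaving $\mu=\prod_{I\in\mathcal P:\,I\neq\{1,2\}}x_I^{\,c_I}$ with $c_I=\sum_{j\in I\setminus\{1,2\}}2^{j-3}$ when $1\in I$ and $c_I=s+\sum_{j\in I\setminus\{1,2\}}2^{j-3}$ when $1\notin I$. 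Since $I\setminus\{1,2\}$ runs over the nonempty subsets of $\{3,\dots,k\}$ and these have pairwise distinct sums $\sum_j 2^{j-3}\in\{1,\dots,s\}$, the exponents $c_I$ are distinct positive integers (those with $1\in I$ lying in $\{1,\dots,s\}$, those with $1\notin I$ in $\{s+1,\dots,2s\}$). Because $w=\prod_{I\neq\{1,2\}}x_I$ is square-free by \eqref{square}, the $x_I$ with $I\neq\{1,2\}$ are pairwise coprime and square-free, so for each prime $p\mid\mu$ the valuation $v_p(\mu)$ equals $c_I$ for the unique $I\neq\{1,2\}$ with $p\mid x_I$; distinctness of the $c_I$ then lets us read off every $x_I$ with $I\neq\{1,2\}$, hence $w$, hence $x_{\{1,2\}}=(t_2/n)/w$. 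Thus all the parameters $e,(x_I)$ are functions of the solution, which is the asserted injectivity.

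I expect the construction of this weighting, together with the verification that the induced exponents $c_I$ are genuinely all distinct, to be the only real obstacle: it is precisely this separation of exponents that lets the square-freeness hypothesis \eqref{square} pin down the individual $x_I$, and everything else is routine bookkeeping. (One can check on $k=3$ that $\mu=x_{\{2,3\}}^2 x_{\{1,2,3\}}^{}$, recovering the argument used in Section \ref{lower-sec}.)
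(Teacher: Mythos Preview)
Your argument is correct. The verification that $(t_1,\dots,t_k)$ is a Type II solution and the size estimates for $b,n,t_j$ are essentially identical to the paper's (the paper also reduces the equation to \eqref{x13} after dividing out common factors, and bounds $e\gg N^{7/8}$ against the $O(N^{1/8})$ subtracted terms).

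The injectivity step, however, is genuinely different. The paper works with the $k-2$ quantities
\[
\frac{t_2 t_j}{n^2 t_1}=\Big(\prod_{I:\,j\in I,\,1\notin I}x_I\Big)^{2}\prod_{I:\,(j\in I)\ \mathrm{XOR}\ (1\notin I)}x_I\qquad(3\le j\le k),
\]
extracts the largest perfect square from each (using square-freeness of $w$) to obtain the products $\prod_{I:\,j\in I,\,1\notin I}x_I$, and then peels off the individual $x_I$ by an iterated greatest-common-divisor argument, first recovering the $x_I$ with $1\notin I$ level by level (by cardinality of $I$), and afterwards the $x_I$ with $1\in I$. Your device is instead to build a single monomial $\mu=(t_2/n)^s t_1^{-s}\prod_{j\ge 3}(t_j/n)^{a_j}$ whose exponent on each $x_I$ (for $I\neq\{1,2\}$) is the binary encoding of $I\setminus\{1,2\}$ shifted by $s$ when $1\notin I$; the resulting exponents $c_I$ are pairwise distinct, and since the $x_I$ in $w$ are pairwise coprime and square-free, the prime valuation $v_p(\mu)$ immediately identifies the unique $I$ with $p\mid x_I$. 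This is more compact than the paper's iterative extraction and sidesteps the repeated gcd bookkeeping; on the other hand, the paper's approach uses only the natural ratios $t_2 t_j/(n^2 t_1)$ and the ``largest square'' observation, which is perhaps conceptually simpler even if longer to execute. For $k=3$ the two coincide: your $\mu$ is exactly the paper's $t_2 t_3/(n^2 t_1)=x_{\{2,3\}}^2 x_{\{1,2,3\}}$. (A tiny quibble: your closing reference to ``Section~\ref{lower-sec}'' should point to the proof of the present lemma rather than to the $k=3$ lower-bound section, which does not carry out this injectivity computation.)
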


\begin{remark} In the $m=4,k=3$ case, the parameters $x_I$ are related to the coordinates $(a,b,c,d,e,f)$ appearing in Proposition \ref{type-2} by the formula
$$
(a,b,c,d,e,f) = (x_{12}, b, x_{23}, x_{123}, e, 4 x_{12} x_{23} x_{123} - 1);$$
however, the constraint that $a,b,c$ have no common factor and $abd$ is coprime to $n$ has been replaced by the slightly different criterion that $d$ is squarefree, which turns out to be more convenient for obtaining lower bounds (note that the same trick was also used to prove \eqref{2n-lower}).
  Parameterisations of this type have appeared numerous times in the previous
  literature (see \cite{gupta, hall, ruzsa, els}, or indeed Propositions
  \ref{type-1}, \ref{type-2}), though because most of these parameterisations
  were focused on dealing with \emph{all} solutions of a given type, as opposed to an easily countable subset of solutions, there were more parameters $x_I$ (indexed by all non-empty subsets of $\{1,\ldots,k\}$, not just the ones in ${\mathcal P}$), and there were some coprimality conditions on the $x_I$ rather than square-free conditions.
\end{remark}

\begin{proof}  Let the notation be as in the lemma.    Then from \eqref{xi-bound} one has
$$  \sum_{j=3}^k \prod_{I \in {\mathcal P}: j \not \in I} x_I \leq (k-2) N^{2^{k-2}/2^{k+2}} \ll N^{1/16}$$
while since
$$ \prod_{I \in {\mathcal P}} x_I \ll N^{2^{k-1}/2^{k+2}} \ll N^{1/8}$$
we see from \eqref{eprod} that
$$ e \gg N^{7/8}.$$
From \eqref{x13} we then have that
$$ \frac{1}{2} e \prod_{I \in {\mathcal P}: 1 \not \in I} x_I \leq b \leq e \prod_{I \in {\mathcal P}: 1 \not \in I} x_I $$
and thus by \eqref{t1-def}
$$ \frac{1}{2} e \prod_{I \in {\mathcal P}} x_I \leq t_1 \leq e \prod_{I \in {\mathcal P}} x_I$$
and thus by \eqref{ndef} (noting that $m \geq 4$)
$$ \frac{1}{4} m e \prod_{I \in {\mathcal P}} x_I \leq n \leq m e \prod_{I \in {\mathcal P}} x_I.$$
These bounds ensure that $b, n, t_1,\ldots,t_k$ are natural numbers with $n
\leq N$, and with $t_2,\ldots,t_k$ divisible by $n$.  Dividing \eqref{x13} by
$b\, n\prod_{I \in {\mathcal P}} x_I$ and using \eqref{t1-def}, \eqref{t2-def}, \eqref{tj-def}, we conclude that
$$ 
\frac{1}{t_2} = \frac{e}{nt_1} - \sum_{j=3}^k \frac{1}{t_j};$$
applying \eqref{ndef} one concludes that $(t_1,\ldots,t_k)$ is a Type II solution.

It remains to demonstrate that each choice of $(x_I)_{I \in {\mathcal P}}$ and $e$ generates a distinct Type II solution, or equivalently that the Type II solution $(t_1,\ldots,t_k)$ uniquely determines $(x_I)_{I \in {\mathcal P}}$ and $e$.  To do this, first observe from \eqref{m/nsumofk} that $(t_1,\ldots,t_k)$ determines $n$, and from \eqref{ndef} we see that $e$ is determined also.  Next, observe from \eqref{t1-def}, \eqref{t2-def}, \eqref{tj-def} that for any $3 \leq j \leq k$, one has
\begin{equation}\label{2j1}
\frac{t_2 t_j}{n^2 t_1} = \left(\prod_{I \in {\mathcal P}: j \in I; 1 \not \in I} x_I\right)^2 \left(\prod_{I \in {\mathcal P}: j \in I \hbox{ XOR } 1 \not \in I} x_I\right)
\end{equation}
where XOR denotes the exclusive or operator; in particular, the left-hand side is necessarily a natural number.  Note that all the factors $x_I$ appearing on the right-hand side are components of the square-free quantity $w$ given by \eqref{square}.  We conclude that $(\prod_{I \in {\mathcal P}: j \in I; 1 \not \in I} x_I)^2$ is the largest perfect square dividing $\frac{t_2 t_j}{n^2 t_1}$.  We conclude that the Type II solution $(t_1,\ldots,t_k)$ determines all the products
\begin{equation}\label{xii}
 \prod_{I \in {\mathcal P}: j \in I; 1 \not \in I} x_I
\end{equation}
for $3 \leq j \leq k$.  Note (from the square-free nature of $w$) that the
$x_I$ with $1 \not \in I$ are all coprime.  Taking the greatest common divisor
of the \eqref{xii} for all $3 \leq j \leq k$, we see that the Type II solution
determines $x_{\{2,3,\ldots,k\}}$.  Dividing this quantity out from all the
expressions \eqref{xii}, and then taking the greatest common divisor of the resulting quotients for $4 \leq j \leq k$, one recovers $x_{\{2,4,\ldots,k\}}$; a similar argument gives $x_I$ for any $I \in {\mathcal P}$ with $1 \not \in I$ of cardinality $k-3$.  Dividing out these quantities and taking greatest common divisors again, one can then recover $x_I$ for any $I \in {\mathcal P}$ with $1 \not \in I$ of cardinality $k-4$; continuing in this fashion we can recover all the $x_I$ with $I \in {\mathcal P}$ and $1 \not \in I$.

Returning to \eqref{2j1}, we can then recover the products $\prod_{I \in {\mathcal P}: 1, j \in I} x_I$ for all $3 \leq j \leq k$.  Taking greatest common divisors iteratively as before, we can then recover all the $x_I$ with $I \in {\mathcal P}$ and $1 \in I$, thus reconstructing all of the data $(x_I)_{I \in {\mathcal P}}$ and $e$, as claimed.
\end{proof}

In view of this above lemma, we see that to prove \eqref{many-1}, it suffices to show that the number of tuples
$( (x_I)_{I \in {\mathcal P}}, e )$ obeying the hypotheses of the lemma is at least $c N (\log N)^{2^{k-1}-1}$ for an absolute constant $c>0$.

Observe that if we fix $x_I$ with $I \in {\mathcal P}$ obeying \eqref{xi-bound} and with the quantity $w$ defined by \eqref{square}, then there are 
$$\gg \frac{N}{\prod_{I \in {\mathcal P}} x_I}$$ 
choices of $e$ that obey \eqref{eprod}.  Thus, noting that $\mu^2(w) \geq \mu^2(\prod_{I \in {\mathcal P}} x_I)$, 
the number of tuples obeying the hypotheses of the lemma is
\begin{equation}\label{mush}
\gg N \sum_* \frac{\mu^2(\prod_{I \in {\mathcal P}} x_I)}{\prod_{I \in {\mathcal P}} x_I},
\end{equation}
where the sum $\sum_*$ ranges over all choices of $(x_I)_{I \in {\mathcal P}}$ obeying the bounds \eqref{xi-bound}.  
To estimate \eqref{mush}, we make use of
\cite[Theorem 6.4]{elsholtz:2001}, which we restate as a lemma:

\begin{lemma}{\label{squarefree-sum}}  Let $l \geq 1$, and for each $1 \leq i \leq l$, let $\alpha_i < \beta_i$ be positive real numbers.  Then
\begin{equation}
\sum_{N^{\alpha_i} \leq n_i \leq N^{\beta_i} \hbox{ for all } 1 \leq i \leq l} 
\frac{\mu^2(n_1 \cdots n_l)}{n_1 \cdots n_l} 
\gg_l (\log N)^l \prod_{i=1}^l(\beta_i - \alpha_i),
\end{equation}
for $N$ sufficiently large depending on $l$ and the $\alpha_1,\ldots,\alpha_l,\beta_1,\ldots,\beta_l$.
\end{lemma}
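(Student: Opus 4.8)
The plan is to prove this by a short sieve argument (this is essentially \cite[Theorem 6.4]{elsholtz:2001}). Write $\Sigma$ for the left-hand side, the $n_i$ ranging over the box $\prod_{i=1}^l[N^{\alpha_i},N^{\beta_i}]$. The obstruction to factoring $\Sigma$ is that $\mu^2(n_1\cdots n_l)$ couples the $n_i$: it equals $1$ exactly when the $n_i$ are individually squarefree \emph{and} pairwise coprime. I would decouple this by choosing a threshold $z$ (to be fixed at the end as a function of $l$ only), splitting every integer $s$ into its $z$-smooth part $s_{\le z}$ and its $z$-rough part $s_{>z}$ so that $\mu^2(s)=\mu^2(s_{\le z})\mu^2(s_{>z})$, and then discarding the rough part by a first-order Bonferroni inequality: a prime $p>z$ divides $n_1\cdots n_l$ to order $\ge 2$ only if $p^2\mid n_i$ for some $i$ or $p\mid n_i$ and $p\mid n_j$ for some $i\neq j$, so
\[
\mu^2\bigl((n_1\cdots n_l)_{>z}\bigr)\ \ge\ 1-\sum_{p>z}\Bigl(\sum_i 1_{p^2\mid n_i}+\sum_{i<j}1_{p\mid n_i}1_{p\mid n_j}\Bigr).
\]
Since $0\le\mu^2((n_1\cdots n_l)_{\le z})\le 1$, multiplying this inequality through, dividing by $n_1\cdots n_l$ and summing over $\vec n$ gives $\Sigma\ge M-E$, where $M:=\sum_{\vec n}\mu^2((n_1\cdots n_l)_{\le z})/(n_1\cdots n_l)$ and $E:=\sum_{p>z}\sum_{\vec n}\bigl(\sum_i 1_{p^2\mid n_i}+\sum_{i<j}1_{p\mid n_i}1_{p\mid n_j}\bigr)/(n_1\cdots n_l)$.

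For the main term $M$, I would simply restrict to tuples with every $n_i$ coprime to $Q:=\prod_{p\le z}p$; then $(n_1\cdots n_l)_{\le z}=1$, the $\mu^2$ factor is $1$, and $M$ is at least the product $\prod_{i=1}^l\bigl(\sum_{N^{\alpha_i}\le n\le N^{\beta_i},\ (n,Q)=1}1/n\bigr)$. By inclusion--exclusion over $d\mid Q$ one has $\sum_{n\le x,(n,Q)=1}1=\tfrac{\phi(Q)}{Q}x+O_z(1)$, hence by partial summation each factor equals $\tfrac{\phi(Q)}{Q}(\beta_i-\alpha_i)\log N+O_z(1)$, which is $\ge\tfrac12\tfrac{\phi(Q)}{Q}(\beta_i-\alpha_i)\log N$ once $N$ is large in terms of $z$ and the $\alpha_i,\beta_i$. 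Thus $M\gg(\phi(Q)/Q)^l(\log N)^l\prod_i(\beta_i-\alpha_i)$. For the error $E$, each inner sum again factors over the coordinates; the key estimate is $\sum_{N^{\alpha_i}\le n\le N^{\beta_i},\ d\mid n}1/n\ll\tfrac{(\beta_i-\alpha_i)\log N}{d}$ for $N$ large (write $n=dm$ and bound the harmonic sum over the interval $[N^{\alpha_i}/d,N^{\beta_i}/d]$, whose logarithmic length is $\le(\beta_i-\alpha_i)\log N+O(1)$ — note this uses $\beta_i-\alpha_i$, not merely $\beta_i$), applied with $d=p$ and $d=p^2$, together with the trivial bound $\sum_{N^{\alpha_k}\le n\le N^{\beta_k}}1/n\ll(\beta_k-\alpha_k)\log N$ for the remaining coordinates. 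Summing over $p>z$ and using $\sum_{p>z}p^{-2}\ll 1/z$ then yields $E\ll_l z^{-1}(\log N)^l\prod_i(\beta_i-\alpha_i)$.

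Combining the two, $\Sigma\ge\bigl(c_1(\phi(Q)/Q)^l-c_2(l)z^{-1}\bigr)(\log N)^l\prod_i(\beta_i-\alpha_i)$ with $c_1>0$ absolute and $c_2(l)>0$. Since Mertens' theorem gives $\phi(Q)/Q=\prod_{p\le z}(1-1/p)\gg 1/\log z$, the bracketed quantity is, up to constants depending on $l$, at least $z(\log z)^{-l}-c_2(l)z^{-1}$, which is positive once $z$ exceeds a threshold $z_0(l)$ depending only on $l$. Fixing $z=z_0(l)$ finishes the proof, with an implied constant depending only on $l$. The one genuinely delicate point is this uniformity in the $\alpha_i,\beta_i$: if one is content with a constant allowed to depend on $l$ and on the $\alpha_i,\beta_i$ — which is all that is needed for the application \eqref{mush} — one may simply take $z$ to be a large absolute constant (or even estimate $\Sigma$ directly, since the $\alpha_i,\beta_i$ are then fixed); the only extra work above is in tuning the smoothness cutoff as a function of $l$ alone while keeping the collision error negligible.
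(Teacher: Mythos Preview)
The paper does not prove this lemma at all: it is quoted verbatim as \cite[Theorem~6.4]{elsholtz:2001} and used as a black box. Your proposal is therefore supplying an argument the paper omits, and the sieve approach you give is correct and standard.

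A couple of small points. First, in the final paragraph you write that the bracketed constant is ``at least $z(\log z)^{-l}-c_2(l)z^{-1}$''; the factor of $z$ is a slip, and what you actually have (via Mertens) is $\gg_l (\log z)^{-l}-c_2(l)z^{-1}$. This does not affect the conclusion, since $(\log z)^{-l}$ still dominates $z^{-1}$ for $z$ large enough in terms of $l$. Second, your bound $\sum_{N^{\alpha_i}\le n\le N^{\beta_i},\,d\mid n}1/n\ll (\beta_i-\alpha_i)(\log N)/d$ needs a word of care when $d>N^{\alpha_i}$ (the lower limit of the $m$-sum drops below $1$), but in that range the harmonic sum is $\le \log(N^{\beta_i}/d)+O(1)\le(\beta_i-\alpha_i)\log N+O(1)$ anyway, so the bound survives once $(\beta_i-\alpha_i)\log N\ge 1$, which is covered by your ``$N$ large'' hypothesis. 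With these cosmetic fixes the argument is complete, and---as you correctly note---the choice of $z$ depends only on $l$, so the implied constant in the final bound depends only on $l$, matching the statement.
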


From this lemma (and noting that there are $2^{k-1}-1$ parameters $x_I$ in the sum $\sum_*$) we see that 
\begin{equation}\label{mush-2}
\sum_* \frac{\mu^2(\prod_{I \in {\mathcal P}} x_I)}{\prod_{I \in {\mathcal P}} x_I} \gg \log^{2^{k-1}-1} N;
\end{equation}
inserting this into \eqref{mush} we obtain the claim.

Now we prove \eqref{many-2}.  As in Section \ref{lower-sec}, the arguments are similar to those used to prove \eqref{many-1}, but with the additional input of the Bombieri-Vinogradov inequality.

As in the proof of \eqref{many-1}, it suffices to obtain a lower bound (in this case, $c {N (\log N)^{2^{k-1}-2}}/{\log \log N}$ for some $c>0$) on the number of tuples
$( (x_I)_{I \in {\mathcal P}}, e )$, but now with the additional constraint that the quantity
\begin{align*}
p := mt_1 - e = m b \prod_{I \in {\mathcal P}: 1 \in I} x_I - e
\end{align*}
is prime.  

Suppose we fix $(x_I)_{I \in {\mathcal P}}$ obeying \eqref{xi-bound} with $w$ squarefree.  We may write
$$ p = qe + r$$
where
\begin{equation}\label{qmash}
 q := m \prod_{I \in {\mathcal P}} x_I - 1
 \end{equation}
and
$$ r := - m \prod_{I \in {\mathcal P}: 1 \in I} x_I \sum_{j=3}^k \prod_{I \in {\mathcal P}: j \not \in I} x_I.$$
Thus as $e$ varies in the range given by \eqref{eprod}, $qe+r$ traces out an arithmetic progression of spacing $q$ whose convex hull contains $[0.6 N, 0.9 N]$ (say).  Thus, every prime $p$ in this interval $[0.6 N, 0.9 N]$ that is congruent to $r \mod q$ will provide an $e$ that will give a Type II solution with $n=p$ prime, and different choices of $(x_I)_{I \in {\mathcal P}}$ and $p$ will give different Type II solutions.

For fixed $(x_I)_{I \in {\mathcal P}}$, if $r$ is coprime to $q$, then we see from \eqref{dnq} (and estimating $\li(x) = (1+o(1)) {x}/{\log x}$) that the number of such $p$ is at least
$$ \geq c \frac{N}{\log N \phi(q)} - D(0.6 N; q) - D(0.9 N; q)$$
for some absolute constant $c>0$.  It thus suffices to show that
\begin{equation}\label{many-2a}
 \sum_* \mu^2(w) 1_{(r,q)=1} \frac{N}{\log N \phi(q)} \gg \frac{N (\log N)^{2^{k-1}-2}}{\log \log N}
\end{equation}
and
\begin{equation}\label{many-2b}
 \sum_* D(cN;q) = o\left( \frac{N (\log N)^{2^{k-1}-2}}{\log \log N} \right)
\end{equation}
for $c=0.6, 0.9$.

We first show \eqref{many-2a}.  Since $\li(N/100)$ is comparable to $N/\log N$, and $\phi(q) \leq q \ll w$, we may simplify \eqref{many-2a} as
\begin{equation}\label{many-2c}
 \sum_* \frac{\mu^2(w)}{\prod_{I \in {\mathcal P}} x_I} 1_{(r,q)=1} \gg \frac{(\log N)^{2^{k-1}-1}}{\log \log N}.
\end{equation}
The expression on the left-hand side is similar to \eqref{mush}, but now one also has the additional constraint $1_{(r,q)=1}$.  To deal with this constraint, we restrict the ranges of the $x_I$ parameters somewhat to perform an averaging in the $x_{\{1,2\}}$ parameter (taking advantage of the fact that this parameter does not appear in the $\mu^2(w)$ term).  More precisely, we restrict to the ranges where
\begin{equation}\label{x-bang}
 x_I \leq N^{1/2^{100k}}
\end{equation}
(say) for $I \neq \{1,2\}$, and
\begin{equation}\label{y-bang}
x_{\{1,2\}} \leq N^{1/2^{k+2}}.
\end{equation}
We now analyse the constraint that $r$ and $q$ are coprime.  We can factor
$$ r = - m x_{\{1,2\}}^2 s$$
where
$$ s := \left(\prod_{I \in {\mathcal P}: 1 \in I; I \neq \{1,2\}} x_I\right) \sum_{j=3}^k \prod_{I \in {\mathcal P}: j \not \in I; I \ne \{1,2\}} x_I;$$
the point is that $s$ does not depend on $x_{\{1,2\}}$.  Since $q+1$ is divisible by $m x_{\{1,2\}}$, we see that $m x_{\{1,2\}}^2$ is coprime to $q$, and thus $(q,r)=1$ iff $(q,s)=1$.  We can write $q = u x_{\{1,2\}} - 1$, where $u := m \prod_{I \in {\mathcal P}: I \neq \{1,2\}} x_I$, and so $(q,r)=1$ iff $(u x_{\{1,2\}}-1,s)=1$.

We may replace $s$ here by the largest square-free factor $s'$ of $s$.  If we then factor $s' = vy$, where $v := (s',u)$ and $y := s'/v$, then $u x_{\{1,2\}}-1$ is already coprime to $v$, and so we conclude that $(q,r)=1$ iff $(u x_{\{1,2\}}-1,y)=1$.

Fix $x_I$ for $I \neq \{1,2\}$.  By construction, $u$ and $y$ are coprime, and so the constraint $(u x_{\{1,2\}}-1,y)=1$ restricts $x_{\{1,2\}}$ to $\phi(y)$ distinct residue classes modulo $y$.  Since
$$ y \leq s \ll N^{1/2^{90k}}$$
(say) thanks to \eqref{x-bang}, we conclude that 
$$ \sum_{x_{\{1,2\}} \leq N^{1/2^{k+2}}} \frac{1_{(q,r)=1}}{x_{\{1,2\}}} \gg \frac{\phi(y)}{y} \log N.$$
Using the crude bound \eqref{phi-lower}, we may lower bound ${\phi(y)}/{y} \gg {1}/{\log \log N}$.  (It is quite likely that by a finer analysis of the generic divisibility properties of $y$, one can remove this double logarithmic loss, but we will not attempt to do so here.)  We may thus lower bound the left-hand side of \eqref{many-2c} by
$$ \frac{\log N}{\log \log N} \sum_{**} \frac{\mu^2(w)}{w},$$
where $\sum_{**}$ sums over all $x_I$ for $I \neq \{1,2\}$ obeying \eqref{x-bang}.  But by Lemma \ref{squarefree-sum} we have
$$ \sum_{**} \frac{\mu^2(w)}{w} \gg (\log N)^{2^{k-1}-2},$$
and the claim \eqref{many-2c} follows.
 
Finally, we show \eqref{many-2b}.  Observe that each $q$ can be represented in the form \eqref{qmash} in at most $\tau_{2^{k-1}-1}(q+1)$ different ways; also, from \eqref{xi-bound} we have $q \ll N^{2^{k-1}/2^{k+2}} = N^{1/8}$.  We may thus bound the left-hand side of \eqref{many-2b} by
$$
\sum_{q \ll N^{1/8}} D(cN;q) \tau_{2^{k-1}-1}(q+1).$$
From the Bombieri-Vinogradov inequality \eqref{bombieri} and the trivial bound $D(cN;q) \ll N/q$ one has
$$
\sum_{q \ll N^{1/8}} q D(cN;q)^2 \ll_A N \log^{-A} N$$
for any $A>0$, while from Lemma \ref{upper-crude} (and shifting $q$ by $1$) one has
$$ \sum_{q \ll N^{1/8}} \frac{\tau_{2^{k-1}-1}(q+1)^2}{q} \ll \log^{O(1)} N.$$
The claim then follows from the Cauchy-Schwarz inequality (taking $A$ large enough).  The proof of Theorem \ref{many-solutions} is now complete.

\appendix

\section{Some results from number theory}

In this section we record some well-known facts from number theory that we will need throughout the paper.  We begin with a crude estimate for averages of multiplicative functions.

Now we record some asymptotic formulae for the divisor function $\tau$.  From the Dirichlet hyperbola method we have the asymptotic
\begin{equation}\label{tau-1}
 \sum_{n \leq N} \tau(n) = N \log N + O(N)
\end{equation}
(see e.g. \cite[\S 1.5]{iwaniec}). More generally, we have
\begin{equation}\label{tau-k}
 \sum_{n \leq N} \tau_k(n) = N \log^{k-1} N + O_k(N \log^{k-2} N)
\end{equation}
for all $k \geq 1$, where $\tau_k(n) := \sum_{d_1,\ldots,d_k: d_1 \ldots d_k = n} 1$.  Indeed, the left-hand side of \eqref{tau-k} can be rearranged as
$$ \sum_{d_1 \leq N} \sum_{d_2 \leq N/d_1} \ldots \sum_{d_k \leq N/d_1 \ldots d_{k-1}} 1$$
and the claim follows by evaluating each of the summations in turn.

We can perturb this asymptotic:

\begin{lemma}[Crude bounds on sums of multiplicative functions]\label{upper-crude}  Let $f(n)$ be a multiplicative function obeying the bounds
$$ f(p) = m + O(\frac{1}{p})$$
for all primes $p$ and some integer $m \geq 1$, and
$$ |f(p^j)| \ll j^{O(1)}$$
for all primes $p$ and $j > 1$.  Then one has
$$ \sum_{n \leq N} f(n) \ll_m N \log^{m-1} N$$
for $N$ sufficiently large depending on $m$; from this and summation by parts we have in particular that
$$ \sum_{n \leq N} \frac{f(n)}{n} \ll_m \log^{m} N$$
If $f$ is non-negative, we also have the corresponding lower bound
$$ \sum_{n \leq N} f(n) \gg_m N \log^{m-1} N$$
and hence
$$ \sum_{n \leq N} \frac{f(n)}{n} \gg_m \log^{m} N$$
\end{lemma}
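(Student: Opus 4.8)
The plan is to compare $f$ against the $m$-fold divisor function $\tau_m$ by Dirichlet convolution. Let $g$ be the multiplicative function defined by $f=\tau_m*g$; equivalently, $g$ is the convolution of $f$ with the $m$-fold convolution $\mu*\cdots*\mu$, so that on a prime power one has the formal identity $\sum_{j\ge 0}g(p^j)x^j=(1-x)^m\sum_{j\ge 0}f(p^j)x^j$. Reading off coefficients and using $f(1)=1$, $f(p)=m+O(1/p)$, and $|f(p^j)|\ll j^{O(1)}$ for $j\ge 2$, one obtains $g(p)=f(p)-m=O(1/p)$ and $|g(p^j)|\ll_m j^{O(1)}$ for all $j\ge 2$, uniformly in $p$. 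Hence, for any fixed $\delta\in[0,1/4]$, the Euler product
\[
\sum_{d\ge 1}\frac{|g(d)|}{d^{1-\delta}}=\prod_p\Bigl(1+\sum_{j\ge 1}\frac{|g(p^j)|}{p^{j(1-\delta)}}\Bigr)
\]
converges: each local factor is finite (polynomial growth of $|g(p^j)|$ against geometric decay of $p^{-j(1-\delta)}$), and for large $p$ it equals $1+O_m(p^{-3/2})$. In particular $\sum_d|g(d)|/d<\infty$, $\sum_{d>T}|g(d)|/d\ll_m T^{-\delta}$, and $\sum_d|g(d)|(\log d)^A/d\ll_{m,A}1$ for every fixed $A\ge 0$.

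For the upper bounds, write $S(N):=\sum_{n\le N}f(n)$ and $T_m(x):=\sum_{e\le x}\tau_m(e)$, so $S(N)=\sum_{d\le N}g(d)\,T_m(N/d)$. By \eqref{tau-1} and \eqref{tau-k}, $T_m(x)\ll_m x(\log(x+2))^{m-1}$ for all $x\ge 1$, hence
\[
|S(N)|\le\sum_{d\le N}|g(d)|\,T_m(N/d)\ll_m N(\log N)^{m-1}\sum_{d\ge 1}\frac{|g(d)|}{d}\ll_m N(\log N)^{m-1}.
\]
Since $|S(t)|\ll_m t(\log(t+2))^{m-1}$ for all $t\ge 1$, partial summation gives $\sum_{n\le N}f(n)/n\ll_m(\log N)^m$, the boundary term being $O((\log N)^{m-1})$ and $\int_1^N t^{-1}(\log(t+2))^{m-1}\,dt\ll_m(\log N)^m$. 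This half uses only the absolute convergence of $\sum_d|g(d)|/d$, not the hypothesis $f\ge 0$.

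For the lower bounds I would use the sharper form of \eqref{tau-k}, namely $T_m(x)=c_m\,x(\log x)^{m-1}+O_m\bigl(x(\log(x+2))^{m-2}\bigr)$ for some constant $c_m>0$. Substituting into $S(N)=\sum_{d\le N}g(d)T_m(N/d)$ and expanding $(\log(N/d))^{m-1}=(\log N)^{m-1}+O\bigl((\log N)^{m-2}\log d\bigr)$, the error terms are absorbed using $\sum_d|g(d)|(\log d)^{m-1}/d\ll_m 1$ and $\sum_{d\le N}|g(d)|(N/d)(\log(2N/d))^{m-2}\ll_m N(\log N)^{m-2}$, giving
\[
S(N)=c_m\,N(\log N)^{m-1}\sum_{d\le N}\frac{g(d)}{d}+O_m\bigl(N(\log N)^{m-2}\bigr).
\]
With $G(1):=\sum_{d\ge 1}g(d)/d$ and $\sum_{d\le N}g(d)/d=G(1)+O_m(N^{-\delta})$, this reads $S(N)=c_m\,G(1)\,N(\log N)^{m-1}+O_m\bigl(N(\log N)^{m-2}\bigr)$. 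The decisive point is $G(1)>0$: since $g$ is multiplicative with $\sum_d|g(d)|/d<\infty$, $G(1)=\prod_p\bigl(1+\sum_{j\ge 1}g(p^j)/p^j\bigr)=\prod_p(1-1/p)^m\bigl(1+\sum_{j\ge 1}f(p^j)/p^j\bigr)$, and here every factor is \emph{strictly positive} because $f\ge 0$ forces $1+\sum_{j\ge 1}f(p^j)/p^j\ge 1$, while the product converges (locally $1+O_m(p^{-2})$) to a nonzero, hence positive, limit. Therefore $S(N)\gg_m N(\log N)^{m-1}$ for $N$ large; and then, dropping the nonnegative boundary and small-$t$ contributions in partial summation, $\sum_{n\le N}f(n)/n\ge\int_{C_0}^N S(t)\,t^{-2}\,dt\gg_m\int_{C_0}^N t^{-1}(\log t)^{m-1}\,dt\gg_m(\log N)^m$, where $C_0=C_0(m)$ is a constant with $S(t)\gg_m t(\log t)^{m-1}$ for $t\ge C_0$.

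I expect the main obstacle to be the bookkeeping around $g$: proving the uniform bound $|g(p^j)|\ll_m j^{O(1)}$ and hence the absolute convergence of $\sum_d|g(d)|/d^{1-\delta}$ that controls the error terms, and --- for the lower bounds --- verifying that the Euler constant $G(1)=\sum_d g(d)/d$ is strictly positive, which is exactly where the non-negativity of $f$ is essential.
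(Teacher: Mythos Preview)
Your proof is correct and follows the same overall strategy as the paper: write $f=\tau_m*g$, bound the local data of $g$ so that $\sum_d |g(d)|/d$ (and indeed $\sum_d |g(d)|/d^{1-\delta}$) converges, and then transfer the known asymptotics for $\tau_m$ through the convolution. The upper bound argument is essentially identical to the paper's.

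The one place where you diverge is the lower bound. The paper does not compute $G(1)=\sum_d g(d)/d$ directly; instead it first replaces $f$ by its restriction to integers coprime to $\prod_{p\le w}p$ (valid since $f\ge 0$), which has the effect of zeroing out both $f$ and $g$ at all small primes, and then chooses $w$ large enough that $\sum_{n\ge 2}|g(n)|/n<1$, so that the constant in front of $N\log^{m-1}N$ is trivially positive. Your route is more direct: you keep $f$ as is, derive the full asymptotic $S(N)=c_m\,G(1)\,N(\log N)^{m-1}+O_m(N(\log N)^{m-2})$, and then observe via the Euler product $G(1)=\prod_p(1-1/p)^m\bigl(1+\sum_{j\ge 1}f(p^j)/p^j\bigr)$ that non-negativity of $f$ forces every local factor to be at least $(1-1/p)^m>0$, whence $G(1)>0$. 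This is a cleaner argument and in fact yields a genuine asymptotic for $S(N)$ rather than just two-sided bounds; the paper's truncation trick, on the other hand, is slightly more robust in that it never needs the precise leading constant in \eqref{tau-k}, only that the sieved sum $\sum_{d\le x,\,(d,P(w))=1}\tau_m(d)$ still has order $x\log^{m-1}x$.
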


One can of course get much better estimates by contour integration methods (and these estimates also follow without much difficulty from the more general results in \cite{halb}), but the above crude bounds will suffice for our purposes.

\begin{proof}  We allow all implied constants to depend on $m$.  By M\"obius inversion, we can write
$$ f(n) = \sum_{d \mid n} \tau_{m}(d) g(\frac{n}{d})$$
where $g$ is a multiplicative function obeying the bounds
$$ g(p) = O(\frac{1}{p})$$
and 
$$ |g(p^j)|\ll j^{O(1)}$$
for all $j > 1$.  In particular, the Euler product
$$ \sum_{n=1}^\infty \frac{|g(n)|}{n} = \prod_p \left(1 + \frac{|g(p)|}{p} +
  \sum_{j=2}^\infty \frac{|g(p^j)|}{p^j}\right) = 
\prod_p \left(1 + O\left(\frac{1}{p^2}\right)\right)$$
is absolutely convergent.

We may therefore write $\sum_{n \leq N} f(n)$ as
\begin{equation}\label{loo}
 \sum_{k \leq N} g(k) \sum_{d \leq N/k} \tau_{m}(d).
 \end{equation}
Applying \eqref{tau-k}, we conclude
$$ |\sum_{n \leq N} f(n)| \ll \sum_{k \leq N} \frac{|g(k)|}{k} N \log^{m-1} N$$
and the upper bound follows from the absolute convergence of $\sum_{n=1}^\infty {|g(n)|}/{n}$.

Now we establish the lower bound.  By zeroing out $f$ at various small primes $p$ (and all their multiples), we may assume that $f(p^j)=g(p^j)=0$ for all $p \leq w$ for any fixed threshold $w$.  By making $w$ large enough, we may ensure that
$$ 1 - \sum_{n=2}^\infty \frac{|g(n)|}{n} > 0.$$
If we then insert the bound \eqref{tau-k} into \eqref{loo} we obtain the claim.
\end{proof}

As a typical application of Lemma \ref{upper-crude} we have
\begin{equation}\label{tau-2}
\sum_{n \leq N} \tau^k(n) \ll_k N \log^{2^k-1} N
\end{equation}
for any $N>1$ and $k \geq 1$,
(see also \cite{Mardjanichvili:1939}).

To study some more detailed distribution of divisors and prime divisors
we recall the \emph{Tur\'{a}n-Kubilius inequality}
for additive functions.
A function $w$ is called additive, if $w(n_1 n_2)=w(n_1)+w(n_2)$, 
whenever $\gcd(n_1,n_2)=1$.

\begin{lemma}[Tur\'{a}n-Kubilius inequality 
(see \cite{schwarzandspilker}, page 20)]{\label{turan-kubilius}}
Let $w:\N \rightarrow \R$ denote an arithmetic function which is additive (thus $w(nm)=w(n)+w(m)$ whenever $n,m$ are coprime).
Let $A(N):=\sum_{p^k\leq N} {w(p^k)}/{p^k}$ and 
$D^2(N):=\sum_{p^k\leq N} {|w(p^k)|^2}/{p^k}$.
For every $N\geq 2$ and for any additive function $w$ the following 
inequality holds:
\[ \sum_{n \leq N} |w(n)-A(N)|^2\leq 30 N D^2(N).\]
(Here $\sum_{p^k}$ denotes the sum over all prime powers.)
\end{lemma}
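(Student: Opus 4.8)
The plan is to run the classical second-moment proof of the Tur\'{a}n--Kubilius inequality. Writing $\bar w := \frac{1}{N}\sum_{n\leq N} w(n)$ for the true mean of $w$ on $[1,N]$, I would first record the exact decomposition
\[ \sum_{n\leq N}|w(n)-A(N)|^2 \;=\; \sum_{n\leq N}|w(n)-\bar w|^2 \;+\; N\,|\bar w - A(N)|^2 , \]
which holds because $\bar w$ minimises $A\mapsto\sum_{n\leq N}|w(n)-A|^2$. Thus it suffices to prove $\sum_{n\leq N}|w(n)-\bar w|^2 \ll N\,D^2(N)$ together with $|\bar w - A(N)|^2 \ll D^2(N)$, and then to track all constants carefully so as to reach the stated value $30$.

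Both bounds rest on the additive decomposition $w(n)=\sum_{p^k\|n}w(p^k)$ (valid since $w(1)=0$), where $p^k\|n$ means $p^k\mid n$ but $p^{k+1}\nmid n$. Put $c_{p^k}:=\#\{n\leq N: p^k\|n\}=\lfloor N/p^k\rfloor-\lfloor N/p^{k+1}\rfloor = \frac{N}{p^k}(1-\frac1p)+O(p)$. Then $\bar w=\sum_{p^k\leq N}w(p^k)\,c_{p^k}/N$, and comparing with $A(N)=\sum_{p^k\leq N}w(p^k)/p^k$ one finds that $\bar w - A(N)$ equals $-\sum_{p^k\leq N}w(p^k)/p^{k+1}$ plus error terms; all of these are $\ll\sqrt{D^2(N)}$ by Cauchy--Schwarz (using $\sum_p p^{-2}<\infty$ and $\sum_{p^k\leq N}p^k\ll N^2/\log N$), which settles the first reduction.

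For the variance, I would expand $|w(n)|^2=\sum_{p^i\|n,\ q^j\|n}w(p^i)w(q^j)$, subtract $N\bar w^2$, and collect the terms into a diagonal part and a cross part:
\[ \sum_{n\leq N}|w(n)-\bar w|^2 \;=\; \sum_{p^i\leq N}w(p^i)^2\,c_{p^i}\,(1-c_{p^i}/N) \;+\; \sum_{\substack{p\ne q\\ p^i,\,q^j\leq N}} w(p^i)w(q^j)\,E_{p^i,q^j}, \]
where $E_{p^i,q^j}:=c_{p^i,q^j}-c_{p^i}c_{q^j}/N$ with $c_{p^i,q^j}:=\#\{n\leq N: p^i\|n,\ q^j\|n\}$. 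The diagonal part is immediately at most $\sum_{p^i\leq N}w(p^i)^2\,c_{p^i}\leq N\sum_{p^i\leq N}w(p^i)^2/p^i = N\,D^2(N)$, which supplies the main portion of the admissible constant.

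The hard part will be the cross-term sum. Each $E_{p^i,q^j}$ is individually small — of order $pq/N$ when $p^iq^j\leq N$, and $1/N$ when $p^iq^j>N$ — but simply inserting absolute values and summing is far too lossy and does not deliver a bound of the shape $N\,D^2(N)$. Instead one must use that $E_{p^i,q^j}$ is essentially a product of two independent equidistribution discrepancies, one for $n$ modulo $p^{i+1}$ and one modulo $q^{j+1}$, so that summation against $w(p^i)w(q^j)$ exhibits genuine cancellation; carrying out this elementary but delicate estimate is what pins down the explicit constant and is precisely the argument in \cite{schwarzandspilker}. (An alternative I would also consider is to deduce the inequality from the large sieve inequality via the usual duality argument, which bypasses the ad hoc estimates at the price of a less explicit constant.)
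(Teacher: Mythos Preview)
The paper does not actually prove this lemma: it is stated in the appendix as a standard result, with a pointer to \cite{schwarzandspilker}, page 20, and no argument is given. So there is nothing in the paper to compare your proof against beyond the fact that you and the paper defer to the same reference for the details.

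That said, your outline is the classical second-moment proof, and the structure is correct: the splitting via the empirical mean $\bar w$, the Cauchy--Schwarz control of $|\bar w - A(N)|$, and the diagonal/off-diagonal decomposition of the variance are all standard. You are also right that the off-diagonal sum is where the work lies and that crude absolute-value bounds are insufficient; one has to exploit the near-independence of the events $p^i\|n$ and $q^j\|n$ (and typically split off the large prime powers, say $p^k > N^{1/2}$, separately). Since you explicitly cite \cite{schwarzandspilker} for that step, your proposal is in the same posture as the paper itself: a correct sketch that leans on the reference for the constant $30$.
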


\begin{example} Let $\omega(n)$ denote the number of distinct prime factors of $n$, then
$A(N)=\sum_{p^k\leq N} {\omega(p^k)}/{p^k}=\log \log N+ O(1)$ and 
$D^2(N)=\sum_{p^k\leq N} {\omega(p^k)^2}/{p^k}=A(N)=\log \log N + O(1)$.
The Tur\'an-Kubilius inequality then gives
\[ \sum_{n \leq N} |\omega(n)-\log \log N|^2\leq 30 N \log \log N +O(N).\]
In particular, if $\xi(n) \to \infty$ as $n \to \infty$, then one has $|\omega(n) - \log \log n| \leq \xi(n) \sqrt{\log\log n}$ for all $n$ in a set of integers of density $1$.  For more details see \cite{tenenbaum}.
\end{example}

From \eqref{tau-1} one might guess the heuristic
\begin{equation}\label{tau-heuristic}
\tau(n) \approx \log n
\end{equation}
\emph{on average}. But it follows from the 
Tur\'{a}n-Kubilius inequality
that for ``typical'' $n$, the number of divsors is about 
$2^{\log \log n}=(\log n)^{\log 2}$, which is
considerably smaller, and that a small number of integers with an 
exceptionally large
number of divisors heavily influences this average. The influence of these
integers with a very large number of divsiors dominates even more for higher
moments.
The extremal cases heuristically consist of many small prime factors, 
and the following
``divisor bound'' holds
\begin{equation}\label{tau-divisor}
\tau(n) \leq 2^{(1+o(1)) \frac{\log n}{\log \log n} } = O(n^{\frac{1}{\log \log n}})
\end{equation}
for any $n \geq 1$; see \cite{ramanujan}.

The Tur\'{a}n-Kubilius type inequalities have been studied for shifted primes
as well. We make use of the following result of Barban 
(see Elliott \cite{elliott}, Theorem 12.10).

\begin{lemma}{\label{barban-turankubilius}}
A function $w: \N\rightarrow \R^+$ is said to be strongly additive
if it is additive and $w(p^k)=w(p)$ holds, for every prime power $p^k$, 
$k \geq 1$.
Let $w$ denote a real nonnegative strongly additive function.
Define $S(N) :=\sum_{p \leq N} {w(p)}/{(p-1)}$ and 
$\Lambda(N):=\max_{p \leq N} w(p)$. Suppose that $\Lambda(N)=o(S(N))$, as $N
\rightarrow \infty$. Then for any fixed $\varepsilon >0$, the prime density
\[ \nu_N(p; |w(p+1)-S(N)| > \varepsilon S(N)) \rightarrow 0 \text{ as } N
\rightarrow \infty.\]
The same holds for other shifts $p+a$, where $a \neq 0$.
\end{lemma}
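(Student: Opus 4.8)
The plan is to prove this as a weak law of large numbers for the strongly additive function $w$ evaluated on shifted primes: one shows that over primes $p\leq N$ the mean of $w(p+1)$ is $(1+o(1))S(N)$ and the variance around $S(N)$ is $o(S(N)^2)$, and then concludes by Chebyshev's inequality. The only technical device needed is to truncate $w$ at a small power of $N$, so that every modulus arising in the moment computation is small enough for the Bombieri--Vinogradov inequality \eqref{bombieri} to apply; the hypothesis $\Lambda(N)=o(S(N))$ is precisely what controls the resulting truncation and error terms.

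First I would fix a small $\delta>0$ (say $\delta=1/8$) and split $w(n)=w^{-}(n)+w^{+}(n)$, where $w^{-}(n):=\sum_{q\mid n,\ q\leq N^\delta} w(q)$ and $w^{+}(n):=\sum_{q\mid n,\ q> N^\delta} w(q)$, with $q$ ranging over primes (using strong additivity, so that $w(n)=\sum_{q\mid n} w(q)$). Since any integer $n\leq N$ has at most $\lceil 1/\delta\rceil$ prime factors exceeding $N^\delta$, we get the pointwise bound $w^{+}(p+1)=O_\delta(\Lambda(N))$, and likewise $S^{+}(N):=\sum_{q>N^\delta} w(q)/(q-1)=O_\delta(\Lambda(N))$ from Mertens' estimate $\sum_{q\leq x}1/q=\log\log x+O(1)$; both are $o(S(N))$ by hypothesis. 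Writing $S^{-}(N):=S(N)-S^{+}(N)=(1+o(1))S(N)$, it therefore suffices to show that $\#\{p\leq N:|w^{-}(p+1)-S^{-}(N)|>(\varepsilon/2)S^{-}(N)\}=o(\pi(N))$.

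Next I would compute moments by strong additivity: $\sum_{p\leq N}w^{-}(p+1)=\sum_{q\leq N^\delta}w(q)\,\pi(N;q,-1)$ and $\sum_{p\leq N}w^{-}(p+1)^2=\sum_{q_1,q_2\leq N^\delta}w(q_1)w(q_2)\,\pi(N;\operatorname{lcm}(q_1,q_2),-1)$. In every count I would replace $\pi(N;m,-1)$ by $\li(N)/\phi(m)$ with error at most $D(N;m)$ as in \eqref{dnq}; since $m\leq N^{2\delta}<N^{1/2}$, the Bombieri--Vinogradov inequality \eqref{bombieri}, combined with a Cauchy--Schwarz step against a divisor-moment estimate exactly as in the treatment of \eqref{upper-ii}, bounds the total error — after crudely weighting by $w(q_i)\leq\Lambda(N)$ — by $O_{A,\delta}(\Lambda(N)^2 N\log^{-A}N)$ for any $A$. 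The main terms collapse: writing $\big(\sum_q w(q)/(q-1)\big)^2$ as the sum of the off-diagonal $q_1\neq q_2$ part and the diagonal, and using $\pi(N)=\li(N)+O(Ne^{-c\sqrt{\log N}})$, one finds
\[
\sum_{p\leq N}\bigl(w^{-}(p+1)-S^{-}(N)\bigr)^2=\li(N)\,\Theta+O_{A,\delta}\!\left(\frac{\Lambda(N)^2 N}{\log^A N}+\frac{\Lambda(N)\,S(N)\,N}{\log^A N}+\frac{S(N)^2 N}{e^{c\sqrt{\log N}}}\right),
\]
where $0\leq\Theta\leq\sum_{q\leq N^\delta}w(q)^2/(q-1)\leq\Lambda(N)S(N)$; note that the leading $\li(N)S^{-}(N)^2$ contributions cancel. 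Since $\Lambda(N)=o(S(N))$, the right-hand side is $o(NS(N)^2/\log N)=o(\pi(N)S^{-}(N)^2)$, and Chebyshev's inequality gives the claim. A general nonzero shift $p+a$ is handled identically, the only difference being that the finitely many primes $q\mid a$ contribute in total only $O_a(\Lambda(N))$ and may be absorbed into $w^{+}$.

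The hard part will be making the error estimate in the moment computation precise — verifying that the Bombieri--Vinogradov bound, degraded by the crude weight $w(q)\leq\Lambda(N)$, genuinely beats the main term $\li(N)\Theta\asymp N\Lambda(N)S(N)/\log N$. It is exactly this tension that forces the truncation at a small power $N^\delta$ (keeping every modulus below $N^{1/2}$, so that one never needs the large-modulus Brun--Titchmarsh bound \eqref{brun}, which is accurate only up to a multiplicative constant and would destroy the off-diagonal cancellation), and that pins down the three distinct roles of the hypothesis $\Lambda(N)=o(S(N))$: discarding the large prime factors of $p+1$, killing the diagonal term $\Theta$, and absorbing the $\Lambda(N)^2$ in the Bombieri--Vinogradov error.
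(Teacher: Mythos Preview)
Your argument is correct, but you should know that the paper does not prove this lemma at all: it is simply quoted as a result of Barban, with a reference to Elliott's book (Theorem~12.10 there). So you have supplied a self-contained proof where the paper is content to cite.

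As a comparison: your route via Bombieri--Vinogradov is the clean modern way to do this, and fits the paper's toolbox perfectly (the Cauchy--Schwarz step you invoke really is the same one used around \eqref{upper-ii}). Barban's original argument predates the Bombieri--Vinogradov theorem in its standard form and goes through his version of the large sieve/dispersion method; the underlying mechanism is closely related, but your packaging is shorter. The truncation at $N^\delta$, the bound $w^{+}(p+1)=O_\delta(\Lambda(N))$ from the pigeonhole on large prime factors, the variance identity leaving only the diagonal $\Theta\leq\Lambda(N)S(N)$, and the Chebyshev conclusion are all standard and correctly executed. One cosmetic point: you only need $\pi(N)=(1+o(1))\li(N)$, not the stronger error term $O(Ne^{-c\sqrt{\log N}})$, to absorb the $S^{-}(N)^2(\pi(N)-\li(N))$ term into $o(\pi(N)S^{-}(N)^2)$.
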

The function $\omega(n)$ is strongly additive.
This lemma implies that for primes with relative prime density 1, 
$p+1$ contains about $\frac{1}{2}\log \log p$ primes of the 
form $1 \bmod 4$. To see
this one chooses $w(p)=1$ if $p\equiv 1 \bmod 4$, and $0$ otherwise.
In this example one has $S(N)\sim \frac{1}{2}\log \log N$ and $\Lambda(N)=1$.

We recall the quadratic reciprocity law
\begin{equation}\label{quadratic}
\left( \frac{m}{n} \right) \left( \frac{n}{m} \right) = (-1)^{(n-1)(m-1)/4}
\end{equation}
for all odd $m,n$, where $\left( \frac{m}{n} \right)$ is the Jacobi symbol, as well as the companion laws
\begin{equation}\label{quadratic-1}
\left( \frac{-1}{n} \right) = (-1)^{(n-1)/4}
\end{equation}
and
\begin{equation}\label{quadratic-2}
\left( \frac{2}{n} \right) = (-1)^{(n^2-1)/8}
\end{equation}
for odd $n$.

For any primitive residue class $a \mod q$ and any $N > 0$, let $\pi(N;q,a)$ denote the number of primes $p<N$ that are congruent to $a$ mod $q$.
We recall the \emph{Brun-Titchmarsh inequality} (see e.g. \cite[Theorem 6.6]{iwaniec})
\begin{equation}\label{brun}
\pi(N;q,a) \ll \frac{N}{\phi(q) \log \frac{N}{q}}
\end{equation}
for any such class with $N \geq q$.  This bound suffices for upper bound estimates on primes in residue classes.   Due to the $q$ in the denominator of $\log({N}/{q})$, it will only be efficient to apply this inequality when $q$ is much smaller than $N$, e.g. $q \leq N^c$ for some $c<1$.

The Euler totient function $\phi(q)$ in the denominator is also inconvenient; it would be preferable if one could replace it with $q$.  Unfortunately, this is not possible; the best bound on ${1}/{\phi(q)}$ in terms of $q$ that one has in general is
\begin{equation}\label{phi-lower}
\frac{1}{\phi(q)} \ll \frac{\log \log q}{q}
\end{equation}
(see e.g. \cite{rosser}).  Using this bound would simplify our arguments, but one would lose an additional factor of $\log \log N$ or so in the final estimates.  To avoid this loss, we observe the related estimate
\begin{equation}\label{phiq-bound}
\frac{1}{\phi(q)} \ll \frac{1}{q} \sum_{d \mid q} \frac{1}{d}.
\end{equation}
Indeed, we have
\begin{align*}
\frac{q}{\phi(q)} &= \prod_{p \mid q} \frac{p}{p-1} \\
&= \prod_{p \mid q} (1 + \frac{1}{p}) (1 + O(\frac{1}{p^2})) \\
&\ll \prod_{p \mid q} (1 + \frac{1}{p}) \\
&\leq \sum_{d \mid q} \frac{1}{d},
\end{align*}
and \eqref{phiq-bound} follows.  (One could restrict $d$ to be square-free here if desired, but we will not need to do so in this paper.)

The Brun-Titchmarsh inequality only gives upper bounds for the number of primes
in an arithmetic progression.
  To get lower bounds, we let $D(N;q)$ denote the quantity
\begin{equation}\label{dnq}
D(N;q) := \max_{(a,q)=1} \left|\pi(N;q,a) - \frac{\li(N)}{\phi(q)}\right|.
\end{equation}
where $\li(x) := \int_0^x {dt}/{\log t}$ is the Cauchy principal value of the
logarithmic integral.
The Bombieri-Vinogradov inequality (see e.g. \cite[Theorem 17.1]{iwaniec})
implies in particular
that
\begin{equation}\label{bombieri}
 \sum_{q \leq N^\theta} D(N;q) \ll_{\theta,A} N \log^{-A} N.
\end{equation}
We remark that the above inequality is usually phrased using the  summatory von Mangoldt
  function $\psi(N;q,a) = \sum_{n \leq N; n = a \mod q} \Lambda(n)$. A
  summation by parts converts  it to an estimate using the prime counting function; see \cite{Bruedern:1995}
for details. 

for all $0 < \theta < 1/2$ and $A > 0$.  Informally, this gives lower bounds on $\pi(N;q,a)$ on the average for $q$ much smaller than $N^{1/2}$.

\bibliographystyle{line}

\end{document}